\newtheorem{theorem}{Theorem}[section]
\newtheorem{definition}{Definition}[section]
\newtheorem{example}{Example}[section]
\newtheorem{conjecture}{Conjecture}[section]
\newtheorem{question}{Question}[section]
\newtheorem{proposition}{Proposition}[section]
\newtheorem{problem}{Problem}[section]
\newtheorem{lemma}{Lemma}[section]
\begin{document}
\mainmatter

\renewcommand{\mod}{\operatorname{mod}}
\newcommand{\proj}{\operatorname{proj}}
\newcommand{\cov}{\operatorname{cov}}
\newcommand{\cocov}{\operatorname{cocov}}
\newcommand{\inj}{\operatorname{inj}}
\newcommand{\prinj}{\operatorname{prinj}}
\newcommand{\Ext}{\operatorname{Ext}}
\newcommand{\row}{\operatorname{row}}
\newcommand{\im}{\operatorname{Im}}
\newcommand{\End}{\operatorname{End}}
\newcommand{\id}{\operatorname{id}}
\newcommand{\Hom}{\operatorname{Hom}}
\newcommand{\Tr}{\operatorname{Tr}}
\newcommand{\D}{\operatorname{D}}
\newcommand{\grade}{\operatorname{grade}}
\newcommand{\cograde}{\operatorname{cograde}}
\newcommand{\add}{\operatorname{\mathrm{add}}}
\renewcommand{\top}{\operatorname{\mathrm{top}}}
\newcommand{\rad}{\operatorname{\mathrm{rad}}}
\newcommand{\soc}{\operatorname{\mathrm{soc}}}
\newcommand{\ul}{\underline}
\newcommand{\red}{\textit{red}}

\renewcommand{\mod}{\operatorname{mod}}

\newcommand{\Z}{\mathbb{Z}}
\newcommand{\RHom}{\operatorname{\mathsf{R}Hom}}
\newcommand{\Lotimes}{\otimes^\mathsf{L}}
\newcommand{\Ocal}{\mathcal{O}}
\newcommand{\X}{\mathbb{X}}
\newcommand{\Ll}{\mathbb{L}}

\renewcommand{\ker}{\mathrm{Ker}}
\newcommand{\Db}{\mathrm{D}^{\mathrm{b}}}
\newcommand{\Simp}{\mathcal{S}}
\newcommand{\resdim}{\text{-}\mathrm{resdim}}
\newcommand{\coresdim}{\text{-}\mathrm{coresdim}}
\newcommand{\domdim}{\operatorname{domdim}}
\newcommand{\codomdim}{\operatorname{codomdim}}
\newcommand{\idim}{\operatorname{id}}
\newcommand{\Mod}{\operatorname{Mod}}
\newcommand{\pdim}{\operatorname{pd}}
\newcommand{\gldim}{\operatorname{gldim}}
\newcommand{\DynA}{\mathbb{A}}
\newcommand{\DynD}{\mathbb{D}}
\newcommand{\DynE}{\mathbb{E}}
\title{A survey on Auslander-Gorenstein algebras}
\titlemark{A survey on Auslander-Gorenstein algebras}

\emsauthor{1}{
	\givenname{Viktória}
	\surname{Klász}
	\mrid{1608805}
    }{V. Klász}
\emsauthor{2}{
	\givenname{Rene}
	\surname{Marczinzik}
	\mrid{1184000}
    }{R. Marczinzik}

\Emsaffil{1}{
	\pretext{}
	\department{Department of Mathematics}
	\organisation{University}
	\address{Endenicher Allee 60}
	\zip{53115}
	\city{Bonn}
	\country{Germany}
	\posttext{}
	\affemail{klasz@math.uni-bonn.de}
	\furtheremail{viktoria.klasz@gmail.com}}
\Emsaffil{2}{
	\pretext{}
	\department{University of Bonn}
	\organisation{University}
	\address{1}{Endenicher Allee 60}
	\zip{1}{53115}
	\city{1}{Bonn}
	\country{1}{Germany}
	\posttext{}
    \affemail{marczire@math.uni-bonn.de}
	\furtheremail{marczinzik.rene@googlemail.com}}
%

\classification[16G10]{16E10}

\keywords{Auslander-Gorenstein algebras, homological dimensions, monomial algebras, incidence algebras, Coxeter permutation}

\chapterdedication{Dedicated to the memory of Karl Libkowitz}

\begin{abstract}
We give a survey on Auslander-Gorenstein algebras with a focus on finite-dimensional algebras.
We put an emphasis on recent classification results for special classes of algebras and the newly discovered interactions of the Auslander-Reiten bijection with other well studied bijections in the literature.
\end{abstract}

\makecontribtitle


Let $R$ be a commutative local noetherian ring. Two of the most fundamental notions in commutative algebra and algebraic geometry are those of Gorenstein rings and regular rings.
$R$ is called \emph{Gorenstein} if the selfinjective dimension $\operatorname{id}_R R$ is finite and $R$ is called \emph{regular} if the Krull dimension of $R$ coincides with the embedding dimension $\operatorname{dim} m/m^2$ of $R$, where $m$ denotes the maximal ideal of $R$. A well known result due to Serre shows that $R$ is regular if and only if $R$ has finite global dimension \cite{Ser}.
In \cite{B}, Bass gives several equivalent characterisations of Gorenstein rings. For us, the most important one is the following:
\begin{theorem}
A commutative local noetherian ring $R$ is Gorenstein if and only if a minimal injective coresolution of the regular module $R$ has the following form:
$$0 \rightarrow R \rightarrow I^0 \rightarrow I^1 \cdots \rightarrow I^i \rightarrow \cdots $$
where $I^i \cong \bigoplus\limits_{\operatorname{ht}p=i}^{}{I(R/p)}$, the direct sum of all injective envelopes of the modules $R/p$ with $p$ a prime ideal of height $i$.
\end{theorem}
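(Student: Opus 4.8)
The plan is to reformulate the statement in terms of Bass numbers and then reduce to a computation over a local ring by localisation. Write the minimal injective coresolution as $I^i = \bigoplus_{p} I(R/p)^{\mu_i(p)}$, where $\mu_i(p) = \mu_i(p,R)$ denotes the number of copies of the injective envelope $I(R/p)$ occurring in the $i$-th term; by Matlis theory these multiplicities are well defined and satisfy $\mu_i(p,R) = \dim_{k(p)} \Ext^i_{R_p}(k(p), R_p)$, where $k(p) = R_p/pR_p$. The assertion of the theorem is exactly that $\mu_i(p,R) = 1$ when $i = \operatorname{ht} p$ and $\mu_i(p,R)=0$ otherwise. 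Since each Bass number is computed after localising at $p$, since $R_p$ is again Gorenstein local, and since $\operatorname{ht} p = \dim R_p$, it suffices to prove the following local statement: if $(S,n,k)$ is Gorenstein local of Krull dimension $d$, then $\Ext^i_S(k,S) \cong k$ for $i=d$ and $\Ext^i_S(k,S)=0$ for $i \neq d$.

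I would establish this by induction on $d$. In the base case $d=0$ the ring $S$ is Artinian, and an Artinian Gorenstein local ring is selfinjective with one-dimensional socle; hence $\Ext^0_S(k,S) = \Hom_S(k,S) = \soc(S) \cong k$ and $\Ext^i_S(k,S)=0$ for $i>0$, as required. For the inductive step, recall that a Gorenstein ring is Cohen-Macaulay, so $\operatorname{depth} S = d > 0$ and $n$ contains a nonzerodivisor $x$. Then $\bar S = S/xS$ is again Gorenstein local, now of dimension $d-1$, so the inductive hypothesis applies to it. The bridge between the two rings is the change-of-rings isomorphism (Rees's lemma): since $x$ is a nonzerodivisor on $S$ and annihilates $k$, one obtains $\Ext^{j}_{S}(k,S) \cong \Ext^{j-1}_{\bar S}(k,\bar S)$ for all $j \geq 1$. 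Combining this with $\Hom_S(k,S)=\soc(S)=0$, which holds because $\operatorname{depth} S>0$ forces $n \notin \operatorname{Ass}(S)$, the inductive hypothesis $\Ext^{j-1}_{\bar S}(k,\bar S) = \delta_{j-1,d-1}\,k$ yields $\Ext^{j}_S(k,S) = \delta_{j,d}\,k$, which closes the induction.

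The converse direction is immediate from the shape of the resolution: if $I^i \cong \bigoplus_{\operatorname{ht} p = i} I(R/p)$ for all $i$, then $I^i = 0$ as soon as $i > \dim R$, since no prime has height exceeding $\dim R$; hence $\operatorname{id}_R R \leq \dim R < \infty$, and $R$ is Gorenstein by definition.

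The routine bookkeeping lies in verifying that Bass numbers localise correctly and that both localisation and passage to $S/xS$ preserve the Gorenstein property together with the expected drop in dimension. The genuine obstacle, and the step I would treat most carefully, is the Rees change-of-rings isomorphism together with the identification of the top Bass number: the induction propagates a one-dimensional $\Ext$, but it is precisely this multiplicity one — equivalently, that the Cohen-Macaulay type of $S$ equals one — that separates the Gorenstein case from that of a general Cohen-Macaulay ring, where the top term of the resolution would instead involve a canonical module of possibly higher multiplicity. Ensuring that the induction pins this multiplicity to $1$ at every stage is the crux of the argument.
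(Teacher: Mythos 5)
The paper itself contains no proof of this statement: it is quoted as Bass's characterisation of Gorenstein rings, with the argument delegated entirely to the reference \cite{B}. So there is no in-paper proof to compare yours against, and your proposal must be judged on its own merits. On those merits it is sound: what you write is essentially the standard textbook argument (as in Bruns--Herzog or Matsumura) --- encode the shape of the minimal injective coresolution in the Bass numbers $\mu_i(p,R)=\dim_{k(p)}\Ext^i_{R_p}(k(p),R_p)$, use that Bass numbers localise and that the Gorenstein property localises to reduce to the statement that $\Ext^i_S(k,S)=0$ for $i\neq d$ and $\Ext^d_S(k,S)\cong k$ over a Gorenstein local ring $(S,n,k)$ of dimension $d$, and prove this by induction on $d$ via a regular element and Rees's change-of-rings isomorphism. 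Your observation that the converse is trivial (no prime has height exceeding $\dim R$, so the stated shape forces $\operatorname{id}_R R\le\dim R<\infty$, which is the paper's definition of Gorenstein) is also correct. I found no false step. My one criticism is that your closing self-diagnosis locates ``the crux'' in the wrong place: Rees's lemma and the propagation of multiplicity one are routine once the induction is set up, since multiplicity one enters only through the base case $\soc S\cong k$. The substantive content is instead hidden in the two facts you merely ``recall'': that a Gorenstein local ring is Cohen--Macaulay, and that an Artinian local ring of finite self-injective dimension is self-injective with simple socle. Both are themselves parts of Bass's fundamental theorem; the first rests on Bass's equality $\operatorname{id}_R R=\operatorname{depth} R$ for modules of finite injective dimension together with an argument (e.g.\ the climbing lemma for Bass numbers along saturated chains of primes) giving $\dim R\le\operatorname{id}_R R$, and the second again uses $\operatorname{id}=\operatorname{depth}$ plus indecomposability of $S$ as a module over itself to identify $S\cong E(k)$. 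With those classical inputs cited (they are all in \cite{B}), your proof is complete; without them it is a correct reduction to prior results rather than a self-contained proof.
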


It can be shown that all the injective modules $I^i$ in the previous theorem have flat dimension at most $i$ and this characterises being Gorenstein, see \cite[Section 3]{FGR} and \cite{FR}.
This lead Auslander to the idea to generalise the notion of regular and Gorenstein rings to the non-commutative world as follows:

\begin{definition}
Let $R$ be a two-sided noetherian ring and $n \geq 1$. Then $R$ is called \emph{$n$-Gorenstein} 
if there exists an injective coresolution of the regular module 
$$0 \rightarrow R \rightarrow I^0 \rightarrow I^1 \rightarrow \cdots \rightarrow I^n \rightarrow \cdots$$
such that $\operatorname{flatdim} I^i \leq i$ for all $0 \leq i <n$.
$R$ is called \emph{Auslander-Gorenstein} if it is $n$-Gorenstein for all $n \geq 1$ and $\operatorname{idim} R < \infty$.
$R$ is called \emph{Auslander regular} if it is Auslander-Gorenstein and has finite global dimension.
\end{definition}
When a ring $R$ is $n$-Gorenstein for all $n$, then $R$ is said to satisfy the \emph{Auslander condition}. It is an open problem for finite-dimensional algebras  $R$ whether $R$ satisfying the Auslander condition implies that $R$ is Auslander-Gorenstein. We will discuss this conjecture later and relate it to some of the other classical homological conjectures. \newline
It seems that the definition of Auslander-Gorenstein rings was first stated explicitly in section 3 of \cite{FGR}, where the authors used the term "Gorenstein ring" instead of "Auslander-Gorenstein ring".
The notion of Auslander-Gorenstein rings quickly became popular in various areas of algebra.
We are aware of 3 textbooks with chapters on Auslander-Gorenstein rings, namely \cite{Bj1}, \cite{VO}, and \cite{LVO}.
There is also a survey article on Auslander-Gorenstein rings, namely \cite{Cl}.
All these sources focus on rings that are not necessarily finite-dimensional $K$-algebras, where $K$ is a field, with mostly infinite-dimensional examples in mind.
In this survey, we focus on the finite-dimensional side, where many important results have been obtained recently.
Therefore, we will only briefly look at general noetherian rings in the first section,  and then focus on the finite-dimensional theory.

\section{Auslander-Gorenstein rings for noetherian rings}
In this section we will mention several areas where Auslander-Gorenstein rings play an important role. Since we want to focus on finite-dimensional algebras in this survey, we will be very brief and mostly only give pointers to the literature.

\subsection{Non-commutative algebraic geometry and Artin-Schelter regular Algebras}
In Van Oystaeyen's approach to non-commutative algebraic geometry, Auslander regular algebras play a similar role as the classical commutative regular rings in classical algebraic geometry.
We refer to the textbook \cite{VO} for more details.

Another important class of rings in non-commutative ring theory and algebraic geometry are Artin-Schelter regular algebras. We refer for example to \cite{Rog} for a survey on Artin-Schelter regular algebras and the relevant definitions, and to \cite{Rog2} for their relevance in non-commutative algebraic geometry.
In \cite{Lev}, it was shown that graded algebras of polynomial growth that are Auslander regular are also Artin-Schelter regular.
The converse is formulated as an open question in 6.3.1 in \cite{Lev}, where some positive results are also obtained in special cases:
\begin{question}
Let $A=K \oplus A_1 \oplus A_2 \cdots $ be a finitely generated graded algebra over $K$.
Is $A$ an Artin-Schelter regular algebra if and only if $A$ is Auslander regular and is of polynomial growth?
\end{question}

\subsection{Enveloping algebras of finite-dimensional Lie algebras and quantum groups}
Universal enveloping algebras of finite-dimensional Lie algebras are Auslander regular, we refer for example to chapter 3 in \cite{LVO} for a modern proof that also applies to several other rings.
The Auslander-Gorenstein property was used by 
Gabber to show that universal enveloping algebras $R$ of solvable finite-dimensional Lie algebras are catenary, which means that for any two prime
ideals $P$ and $Q$ of $R$ with $P \subseteq Q$, any two saturated chains of prime ideals
between $P$ and $Q$ have the same length. We refer for example to chapter 4 in \cite{Cl} for more details.
For quantum groups that are Auslander-Gorenstein and their catenary property, we refer for example to \cite{GL} and \cite{GL2}.

\subsection{Algebraic theory of differential equations, Weyl algebras and generalisations}
Weyl algebras, and more generally, the ring of $\mathbb{C}$-linear differential operators on an irreducible smooth subvariety of an affine
space are Auslander regular, see for example chapter 3 in \cite{VO}.
The theory of Auslander regular rings is used in the study of D-modules and the algebraic theory of differential equations, we refer for example to \cite{Bj1} and \cite{Qua}.
\subsection{Noetherian Hopf algebras}
The following question seems to be one of the major open problems in the homological theory of noetherian Hopf algebras, we refer for example to \cite[Section 3]{Go} for this question and related open problems. 
\begin{question} \label{Hopfquestion}
Let $R$ be a noetherian Hopf algebra. Is $R$ Auslander-Gorenstein?
\end{question}

In Krull dimension zero, this question has a positive answer thanks to the celebrated Larson–Sweedler theorem \cite{LS}, stating that every finite-dimensional Hopf algebra is a Frobenius algebra. Question
\ref{Hopfquestion} was also proven true for some other special classes, such as polynomial identity Hopf algebras, for which we refer to \cite{WZ} and \cite{RWZ}.

\subsection{Number theory and arithmetic geometry}
Auslander-Gorenstein algebras also play a significant role in arithmetic geometry in the study of Iwasawa algebras.
We refer, for example, to the survey article \cite{ArBr} and the articles \cite{CSS,Bo,Ven} for more information.

\section{Auslander-Gorenstein algebras in the finite-dimensional case}
From this section on, we focus on the theory of Auslander-Gorenstein algebras for finite-dimensional $K$-algebras $A$, where $K$ is a field.
We assume that modules are finitely generated right modules unless otherwise stated. $D=\Hom_K(-,K)$ stands for the natural duality of $\mod A$. 
We denote the full subcategory of direct sums of direct summands of a module $M$ by $\add M$, and we write $\operatorname{ind}{C}$ to denote the set of indecomposable objects in $C$ for a subcategory $C$.
A path algebra is a finite-dimensional $K$-algebra of the form $KQ$ where $Q$ is a quiver, and a quiver algebra is a finite-dimensional $K$-algebra of the form $KQ/I$ where $I$ is an admissible ideal. We refer, for example, to the textbooks \cite{ASS}, \cite{ARS}, and \cite{SkoYam} for an introduction to the representation theory of finite-dimensional algebras. We will mostly follow the notation and conventions of the textbook \cite{SkoYam}.
We denote by $P(i)$, $I(i)$ and $S(i)$ the indecomposable projective, injective and simple $A$-modules respectively corresponding to the vertex $i$ in a quiver algebra.
We first collect several equivalent characterisations of Auslander-Gorenstein algebras, as well as some open problems and conjectures.
Recall that the flat dimension coincides with the projective dimension for finitely generated modules over finite-dimensional algebras, see for example \cite[Proposition 4.1.5]{Wei}.
The \emph{grade} of a module $M$ is defined as $\grade M:= \inf \{ i \geq 0 \mid \Ext_A^i(M,A) \neq 0 \}$.
$A$ is called \emph{Iwanaga-Gorenstein} if $\idim A_A = \idim {}_AA < \infty.$
Recall that a module $M$ with minimal injective coresolution 
$$0 \rightarrow M \rightarrow I^0 \rightarrow I^1 \rightarrow I^2 \rightarrow \cdots $$
is said to have \emph{dominant dimension} $\domdim M$ equal to $n$ if $I^n$ is the first non-projective module, and we set $\domdim M= \infty$ if no such $n$ exists. The dominant dimension of an algebra $A$ is defined as the dominant dimension of the regular module. 

\begin{theorem} \label{AusGorcharacterisation}
Let $A$ be an Iwanaga-Gorenstein algebra.
The following are equivalent:
\begin{enumerate}
    \item $A$ is Auslander-Gorenstein.
    \item For every indecomposable injective $A$-module $I$, we have that $\pdim I=\grade \soc I$. 
    \item For all $i \geq 0$ and for each submodule $X$ of a module of the form $\Ext_{A^{op}}^i(C,A)$ for $C \in \mod A^{op}$ we have $\grade X \geq i.$
    \item For each $i\geq 0$ and each simple $A$-module $S$, every composition factor of $\Ext_A^i(S,A)$ has grade at least $i$.
    \item $\Ext_A^i(\Ext_A^i(f,A))$ is a monomorphism for every monomorphism $f: X \rightarrow Y$ and $i \geq 0$.
\end{enumerate}
\end{theorem}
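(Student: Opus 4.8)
The plan is to translate everything into the language of grade and $\Ext$-vanishing, using two \emph{multiplicity lemmas} and the duality $D$ as bookkeeping devices, and to isolate (1)$\Leftrightarrow$(3) as the analytic heart from which the remaining equivalences follow by soft arguments. First I would record the standard computation that, for the minimal injective coresolution $0 \to A \to I^0 \to I^1 \to \cdots$ of the regular module, one has $\Ext_A^i(S(j),A) \neq 0$ if and only if $I(j)$ is a direct summand of $I^i$. The point is that $\Hom_A(S(j),I^\bullet)$ has vanishing differentials: by essentiality of injective envelopes, $\soc I^i \subseteq \im(I^{i-1}\to I^i) = \ker(I^i\to I^{i+1})$, so every map $S(j)\to I^i$ lands in $\ker(I^i\to I^{i+1})$ and $\Ext_A^i(S(j),A)\cong\Hom_A(S(j),I^i)$. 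Hence $\grade \soc I(j) = \min\{\,i \mid I(j)\text{ is a summand of }I^i\,\}$, and since $A$ is Iwanaga--Gorenstein every injective has finite projective dimension, so all these grades are finite.

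With this in hand, (1)$\Leftrightarrow$(2) reduces to a comparison. The Auslander condition $\pdim I^i\le i$ for all $i$ says precisely that $\pdim I(j)\le i$ whenever $I(j)\mid I^i$, and the binding constraint is the first appearance $i=\grade\soc I(j)$; so the Auslander condition is equivalent to $\pdim I(j)\le \grade\soc I(j)$ for every indecomposable injective. The reverse inequality $\grade\soc I(j)\le \pdim I(j)$ holds over Iwanaga--Gorenstein algebras and requires a separate dimension-shift argument on the cosyzygies $\im(I^{i-1}\to I^i)$ of $A$; combining the two directions gives the equality in (2).

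For the core equivalence (1)$\Leftrightarrow$(3) I would apply $D$ to turn the minimal injective coresolution of $A_A$ into a minimal projective resolution of $D(A)$ over $A^{op}$, so that $\operatorname{flatdim}I^i=\pdim I^i$ is read off from where projectives appear, and then analyse the biduality spectral sequence $E_2^{p,q}=\Ext_{A^{op}}^p(\Ext_A^q(M,A),A)$ with abutment $M$. The $n$-Gorenstein condition is equivalent to the vanishing $E_2^{p,q}=0$ for $0\le p<q\le n$, which is exactly the grade condition $\grade X\ge i$ for submodules $X$ of $\Ext_{A^{op}}^i(C,A)$ with $i\le n$; the submodule formulation in (3) is what forces the relevant differentials to vanish, since the grade of a submodule controls the edge maps. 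I expect this spectral-sequence bookkeeping --- equivalently, the dimension-shift induction on $n$ showing that ``$\pdim I^i\le i$ for $i\le n$'' matches ``$\grade X\ge i$ for all submodules $X$ of $\Ext_{A^{op}}^i(C,A)$'' --- to be the main obstacle. It is essentially the content of \cite{FGR}, and it is also the step that yields the left--right symmetry of the Auslander condition.

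Finally I would close the loop among (3), (4), and (5). Granting left--right symmetry, condition (3) gives the analogous grade condition for $\Ext_A^i(C',A)$ with $C'\in\mod A$; specialising $C'=S$ and using that for $V\subseteq U$ with $\grade U,\grade V\ge i$ the long exact sequence of $\Ext(-,A)$ forces $\grade(U/V)\ge i$, the submodule condition passes to composition factors and yields (4). Conversely an induction on composition length, again through the long exact sequence, rebuilds the full submodule condition from the simple case, giving (4)$\Rightarrow$(3). For (5) I would recognise $G_i:=\Ext_{A^{op}}^i(\Ext_A^i(-,A),A)$ as the biduality functor of the same spectral sequence: a submodule $X\subseteq\Ext_A^i(C,A)$ with $\grade X<i$ is exactly an obstruction to $G_i$ preserving a monomorphism, while the submodule condition (3) makes $G_i$ left exact, so that $G_i(f)=\Ext_A^i(\Ext_A^i(f,A),A)$ is monic for every monomorphism $f$. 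This identifies (5) with (3) and completes the cycle.
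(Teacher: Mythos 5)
The paper's own ``proof'' of this theorem is purely a list of citations (KMT for (1)$\Leftrightarrow$(2), FGR for (1)$\Leftrightarrow$(3), AR for (1)$\Leftrightarrow$(4), HI for (1)$\Leftrightarrow$(5)), so your attempt to give actual arguments is necessarily a different route, and a substantial part of it is correct. The socle lemma and the identity $\grade\soc I(j)=\min\{i \mid I(j)\text{ is a summand of }I^i\}$ are right, and they do reduce (1)$\Leftrightarrow$(2) to the inequality $\grade\soc I\le\pdim I$. That inequality is indeed true, and in fact elementary over \emph{any} finite-dimensional algebra once $\pdim I(S)=p$ is finite; but the dimension shift runs along the minimal projective resolution $0\to P_p\to\cdots\to P_0\to I(S)\to 0$, not along the cosyzygies of $A$ as you indicate: at each step either $\Ext^i_A(S,P_i)\neq 0$, giving $\grade S\le i\le p$, or the connecting map embeds $\Ext^i_A(S,\Omega^i I(S))$ into $\Ext^{i+1}_A(S,\Omega^{i+1}I(S))$, and at step $p$ one reaches the projective module $\Omega^p I(S)$. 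With this lemma your (1)$\Leftrightarrow$(2) is complete and bypasses the paper's citation to KMT, which is a genuinely nice feature. Your (3)$\Rightarrow$(4) and (4)$\Rightarrow$(3) (granting left--right symmetry), and your (3)$\Rightarrow$(5), are also sound in outline: for the latter, factor $\Ext^i_A(f,A)$ through its image; the kernel is a submodule of $\Ext^i_A(Y,A)$, hence has grade $\ge i$, the cokernel embeds into $\Ext^{i+1}_A(Z,A)$, hence has grade $\ge i+1$, and these two vanishings make both induced maps on $\Ext^i(-,A)$ injective. (The paper instead obtains (4) by quoting AR directly.)

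Two caveats, one of which is a real gap. First, the core equivalence (1)$\Leftrightarrow$(3), together with the left--right symmetry that all your subsequent steps lean on, is deferred to the FGR spectral-sequence analysis; this matches the paper's own citation, but be aware that identifying the vanishing $E_2^{p,q}=0$ for $p<q$ (a condition on the grade of the \emph{whole} module $\Ext^q(M,A)$) with the submodule condition in (3) is itself part of what FGR prove, not routine bookkeeping. The genuine gap is the direction (5)$\Rightarrow$(3). You assert that a submodule $X\subseteq\Ext^i_{A^{op}}(C,A)$ with $\grade X<i$ ``is exactly an obstruction to $G_i$ preserving a monomorphism'', but no construction is given: $X$ is an abstract submodule, and there is no formal reason why it should arise as the kernel or image of a map of the form $\Ext^i_A(f,A)$ for some monomorphism $f$, so left exactness of $G_i$ on such maps does not formally encode the grade of arbitrary submodules. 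Producing, from a grade violation, a monomorphism $f$ whose double dual $\Ext^i_{A^{op}}(\Ext^i_A(f,A),A)$ fails to be injective is the substantive content of Huang--Iyama's Theorem 4.2 (proved there via cotorsion pairs and Auslander-type induction), and without it your cycle of implications through (5) does not close.
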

\begin{proof}
The equivalence of (1) and (2) was established in \cite{KMT}.
The equivalence of (1) and (3) can be found as a part of Theorem 3.7 in \cite{FGR}, and (1) and (4) are equivalent by Proposition 3.4 of \cite{AR}.
The equivalence of (1) and (5) was proven in Theorem 4.2 of \cite{HI}.
\end{proof}

The next theorem shows that being $k$-Gorenstein is left-right symmetric:

\begin{theorem}
\cite[Theorem 3.7]{FGR}
Let $k \geq 1$.
$A$ is $k$-Gorenstein if and only if $A^{op}$ is $k$-Gorenstein.
\end{theorem}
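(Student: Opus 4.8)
The plan is to convert each of the two statements into an intrinsic grade condition on $\Ext$-modules and then to prove that the resulting conditions are interchanged by passing to $A^{op}$. Since $k$-Gorenstein-ness of $A$ is read off the right regular module and that of $A^{op}$ off the left regular module, it suffices to (i) give, for each ring, a reformulation of $k$-Gorenstein-ness purely in terms of $\Ext$, and (ii) show the two reformulations are equivalent.

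For step (i) I would prove the finite-degree analogue of item (3) of Theorem \ref{AusGorcharacterisation}: $A$ is $k$-Gorenstein if and only if for every $C \in \mod A^{op}$ and every $1 \le i \le k$, each submodule of $\Ext_{A^{op}}^i(C,A)$ has grade at least $i$. To obtain this I would analyse the minimal injective coresolution $0 \to A \to I^0 \to I^1 \to \cdots$ of the right regular module and rewrite the defining inequalities $\operatorname{flatdim} I^{j} \le j$ for $j<k$ (using that flat and projective dimension agree for finitely generated modules here) as vanishing of the $\operatorname{Tor}$-groups tested against left modules; transporting this through the coresolution expresses it exactly as the stated grade bound on the right modules $\Ext_{A^{op}}^i(C,A)$. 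Applying the same reformulation to $A^{op}$ shows that $A^{op}$ is $k$-Gorenstein if and only if every submodule of $\Ext_A^i(M,A)$ has grade at least $i$ for all $M \in \mod A$ and $1 \le i \le k$. The theorem thus reduces to the equivalence of these two grade conditions.

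The heart of the matter is a double-duality spectral sequence. Taking a projective resolution $P_\bullet \to M$ of a right module, applying $\Hom_A(-,A)$ yields a complex of projective left modules with cohomology $\Ext_A^{\,\bullet}(M,A)$; applying $\Hom_{A^{op}}(-,A)$ and using that the canonical double dual of a projective is itself recovers $M$, giving
\[
E_2^{p,q}=\Ext_{A^{op}}^p(\Ext_A^q(M,A),A)\ \Longrightarrow\ \begin{cases} M, & p=q,\\ 0, & p\neq q. \end{cases}
\]
The condition that every submodule of $\Ext_A^q(M,A)$ have grade $\ge q$ --- i.e. that $A^{op}$ be $k$-Gorenstein --- is precisely the vanishing $E_2^{p,q}=0$ for $p<q$, which concentrates the second page on and above the diagonal. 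Assuming it, take a left module $C$ and run the transposed spectral sequence $\Ext_A^p(\Ext_{A^{op}}^q(C,A),A)\Rightarrow C$; its entries $\Ext_A^p(W,A)$ with $W=\Ext_{A^{op}}^q(C,A)$ a right module are governed by the standing hypothesis, which bounds the grade of every submodule of each entry. Feeding these bounds into an induction on $i$, and using that the abutment is concentrated on the diagonal to force the differentials to cancel all below-diagonal cohomology, yields $\Ext_A^p(\Ext_{A^{op}}^i(C,A),A)=0$ for $p<i\le k$, that is, the grade condition characterising $A$ as $k$-Gorenstein. The reverse implication is symmetric.

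The main obstacle is precisely this induction. The delicate points are that one must control not just the $\Ext$-modules but all their submodules, which is exactly why the condition is phrased for submodules and what keeps the relevant class of modules closed under the subquotients produced by the spectral sequence differentials; and that one must exploit the diagonal concentration of the abutment to deduce vanishing for arbitrary left modules $C$, even though such $C$ need not themselves be of the form $\Ext_A^q(M,A)$. Keeping track of which grade bound is available at each position $(p,q)$ and checking that the induction closes in the correct range $i \le k$ is the technical core of the argument.
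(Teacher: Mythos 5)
The survey itself gives no proof of this statement --- it is quoted directly from \cite[Theorem 3.7]{FGR} --- so the benchmark is the classical Auslander/Fossum--Griffith--Reiten argument, and your two-step plan (recast $k$-Gorensteinness of each side as a grade condition on $\Ext$-modules over the other side, then prove the two grade conditions equivalent via the bidualization spectral sequence) is exactly the architecture of that proof. However, there is a genuine gap at the crux of your step (ii). You assert that the condition that every submodule of $\Ext_A^q(M,A)$ has grade $\ge q$ ``is precisely'' the vanishing $E_2^{p,q}=\Ext_{A^{op}}^p(\Ext_A^q(M,A),A)=0$ for $p<q$. It is not: that vanishing says only that the module $\Ext_A^q(M,A)$ itself has grade $\ge q$. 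Over a noncommutative ring grade is not inherited by submodules --- this failure is the entire reason the Auslander condition is formulated for submodules rather than for the $\Ext$-modules alone --- so the submodule condition is strictly stronger than the $E_2$-vanishing. Correspondingly, the stated output of your induction, $\Ext_A^p(\Ext_{A^{op}}^i(C,A),A)=0$ for $p<i\le k$, is only the module-level bound $\grade \Ext_{A^{op}}^i(C,A)\ge i$; by your own step (i), what characterises $A$ as $k$-Gorenstein is that \emph{every submodule} of $\Ext_{A^{op}}^i(C,A)$ has grade $\ge i$. As written, the argument ends with a weaker statement than the one it needs, so the equivalence does not close.

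Your closing remark that the class of modules all of whose submodules have grade $\ge i$ is closed under subquotients is correct and is indeed needed, but it does not fill this hole: the spectral sequence for $C$ produces subquotients of the left modules $\Ext_A^p(\Ext_{A^{op}}^q(C,A),A)$, whereas the missing conclusion concerns submodules $X$ of the right modules $W_i:=\Ext_{A^{op}}^i(C,A)$. Reaching those requires a further layer of argument, for instance: from $0\to X\to W_i\to W_i/X\to 0$ and the module-level bound $\grade W_i\ge i$ one gets embeddings $\Ext_A^p(X,A)\hookrightarrow \Ext_A^{p+1}(W_i/X,A)$ for $p<i$; the standing hypothesis applied to the right module $W_i/X$ then bounds the grades of these $\Ext$-groups; and a second run of the spectral sequence, now for $X$ itself, feeds an induction on $i$. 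Organising this double induction so that the submodule condition is propagated at every stage (and justifying convergence of the bidualization spectral sequence before one knows $\idim A<\infty$, which your sketch takes for granted) is precisely the content of the chain of lemmas preceding Theorem 3.7 in \cite{FGR}; it is the part of the proof your proposal names as ``the technical core'' but does not supply.
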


In particular, being Auslander-Gorenstein is left-right symmetric.
Recall that an algebra is said to satisfy the \emph{Auslander condition} if it is $n$-Gorenstein for all $n \geq 1$. 
An important result for algebras satisfying the Auslander condition is that they satisfy the Gorenstein Symmetry Conjecture, which states that $\idim A_A < \infty$ if and only if $\idim _{A}A < \infty$. The Gorenstein Symmetry Conjecture is still open and is a consequence of the finitistic dimension conjecture.
\begin{theorem} \label{GorsymconjAG}
\cite[Corollary 5.5]{AR}
Let $A$ satisfy the Auslander condition. Then $\idim A_A < \infty$ if and only if $\idim _{A}A < \infty$.
\end{theorem}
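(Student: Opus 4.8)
The plan is to prove only the implication that $\idim A_A < \infty$ forces $\idim {}_A A < \infty$. The reverse implication then comes for free by applying this to $A^{\mathrm{op}}$, which again satisfies the Auslander condition by the left–right symmetry theorem just stated, and for which $\idim (A^{\mathrm{op}})_{A^{\mathrm{op}}} = \idim {}_A A$ and $\idim {}_{A^{\mathrm{op}}}(A^{\mathrm{op}}) = \idim A_A$. So I would fix $d := \idim A_A < \infty$ and work to bound the injective dimension of the left regular module.

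The first step is to replace the Auslander condition by its homological grade formulation: being $n$-Gorenstein for all $n$ is equivalent (\cite{FGR}, \cite{AR}) to $\grade \Ext^i_A(M,A) \geq i$ for every right module $M$ and every $i \geq 0$, together with the symmetric statement $\grade \Ext^i_{A^{\mathrm{op}}}(N,A) \geq i$ for every left module $N$. I would use the left-hand version. Fix a simple left module $S$ and an integer $j > d$, and put $X := \Ext^j_{A^{\mathrm{op}}}(S,A)$, a right module. On the one hand $\idim A_A = d$ gives $\Ext^k_A(-,A) = 0$ for all $k > d$; on the other hand the grade condition gives $\grade X \geq j$, that is $\Ext^k_A(X,A) = 0$ for all $k < j$. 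Since $j > d$, these two vanishing ranges together cover every $k \geq 0$, so $\Ext^k_A(X,A) = 0$ for all $k$, i.e. $\RHom_A(X,A) = 0$ in $\Db(\mod A^{\mathrm{op}})$.

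The argument would then close if one knows that a right module $X$ with $\RHom_A(X,A) = 0$ must vanish: this yields $\Ext^j_{A^{\mathrm{op}}}(S,A) = 0$ for all $j > d$ and all simple left $S$, hence $\idim {}_A A \leq d < \infty$. A first reduction makes this claim cleaner: choosing a simple submodule $S' \subseteq X$ and feeding $0 \to S' \to X \to X/S' \to 0$ into the long exact sequence for $\Ext_A(-,A)$, total acyclicity of $X$ forces $\Ext^k_A(S',A) \cong \Ext^{k+1}_A(X/S',A)$, so a nonzero module of minimal length with vanishing total $\Ext_A(-,A)$ must be simple. Thus it suffices to rule out totally $A$-acyclic simple modules.

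This last point, that no nonzero simple right module is totally $A$-acyclic, is where I expect the real difficulty to lie. It cannot be settled by formal biduality: the natural map $X \to \RHom_{A^{\mathrm{op}}}(\RHom_A(X,A),A)$ is only known to be an isomorphism once $\idim {}_A A < \infty$, which is exactly the conclusion sought, and when $\RHom_A(X,A) = 0$ the biduality spectral sequence $E_2^{p,q} = \Ext^p_{A^{\mathrm{op}}}(\Ext^q_A(X,A),A)$ degenerates on both sides without producing a contradiction. Closing this gap requires the full strength of the Auslander condition for all $i$ at once, via the finer structural results of \cite{AR} linking the grade filtration to the minimal injective coresolutions and the syzygy and transpose modules on the two sides (in the spirit of the monomorphism property in part~(5) of Theorem~\ref{AusGorcharacterisation}). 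The reductions above are essentially formal; this grade-versus-biduality bookkeeping is the technical crux of the whole statement.
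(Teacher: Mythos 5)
Your formal reductions are fine: the left--right symmetry of the Auslander condition does reduce the statement to one implication; for $j>d=\idim A_A$ the module $X=\Ext^j_{A^{\mathrm{op}}}(S,A)$ does satisfy $\Ext^k_A(X,A)=0$ for all $k\geq 0$ (grade $\geq j$ kills $k<j$, injective dimension kills $k>d$); and the theorem would indeed follow if such an $X$ had to vanish. But the proposal then stops exactly where the theorem lives. The claim that $\RHom_A(X,A)=0$ forces $X=0$ is not a deferrable technical lemma: with no hypothesis it is a form of the strong Nakayama conjecture, which is open, and under the present hypotheses it is essentially equivalent to the conclusion being proven --- once one knows $\idim{}_AA<\infty$, the bimodule $A$ is a dualizing complex and derived biduality gives the vanishing, but invoking that here is circular, as you yourself observe. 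So what you have is a reduction of the statement to a statement of the same difficulty, with the hard part attributed back to ``the finer structural results of \cite{AR}''. Note that the survey itself contains no proof to compare against --- it cites \cite[Corollary 5.5]{AR} --- and the content of that reference is precisely the non-formal exploitation of the Auslander condition (via the interplay of minimal injective coresolutions, syzygies and transposes on the two sides) that your write-up names but does not supply. As a proof, this is a genuine gap, not a rough edge.

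A secondary point: the reduction to simple modules is also incorrect as argued. From $0\to S'\to X\to X/S'\to 0$ and total acyclicity of $X$ you get $\Hom_A(X/S',A)=0$ and $\Ext^k_A(S',A)\cong\Ext^{k+1}_A(X/S',A)$ for all $k\geq 0$. These identities show that $S'$ is totally acyclic \emph{if and only if} $X/S'$ is, but they do not show that either one is. A counterexample of minimal length could a priori be non-simple, with neither the simple submodule nor the quotient totally acyclic, so ``minimal implies simple'' does not follow; your long exact sequence only propagates non-vanishing between $S'$ and $X/S'$, it does not create it. This flaw is minor relative to the main gap (even the simple case is the open core), but it means that both the crux and the one reduction you present as ``essentially formal'' beyond the Ext-vanishing trick are unproven.
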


The following is the main open homological conjecture related to Auslander-Gorenstein algebras:
\begin{conjecture} \label{Auslander-Reiten Conjecture}
Let $A$ satisfy the Auslander condition, then $A$ is Auslander-Gorenstein.
\end{conjecture}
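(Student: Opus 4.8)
Since the hypothesis already supplies the Auslander condition, and since an algebra is Auslander-Gorenstein precisely when it satisfies the Auslander condition \emph{and} has finite self-injective dimension, the entire content of the statement is the implication ``Auslander condition $\Rightarrow \idim A < \infty$''. The plan is therefore to reduce everything to proving finiteness of the self-injective dimension on a single side. By the left-right symmetry of the $k$-Gorenstein property, $A^{op}$ also satisfies the Auslander condition, and by Theorem~\ref{GorsymconjAG} the two one-sided self-injective dimensions $\idim A_A$ and $\idim {}_A A$ are finite simultaneously; combined with the classical fact (Zaks) that two finite one-sided self-injective dimensions must coincide, it suffices to establish $\idim A_A < \infty$, after which $A$ is Iwanaga-Gorenstein, the characterisations of Theorem~\ref{AusGorcharacterisation} become available, and $A$ is Auslander-Gorenstein.

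The natural tool is the grade reformulation of the Auslander condition: being $n$-Gorenstein for all $n$ is equivalent to condition (3) of Theorem~\ref{AusGorcharacterisation}, namely that every submodule $X$ of every $\Ext^i_{A^{op}}(C,A)$ satisfies $\grade X \geq i$. First I would exploit this on the minimal injective coresolution $0 \to A \to I^0 \to I^1 \to \cdots$ of the regular module, whose terms satisfy $\pdim I^i \leq i$ (using that flat and projective dimension agree here). Writing $\Omega^{-i}(A)$ for the $i$-th cosyzygy, I would use the short exact sequences $0 \to \Omega^{-i}(A) \to I^i \to \Omega^{-(i+1)}(A) \to 0$ to propagate grade and projective-dimension information up the coresolution. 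The goal is to show that the cosyzygies $\Omega^{-i}(A)$ all have projective dimension bounded independently of $i$, since a uniform bound together with minimality would force the coresolution to terminate.

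To obtain such a uniform bound I would try to route the argument through the finitistic dimension. If $\operatorname{findim}(A) < \infty$, then every module of finite projective dimension has projective dimension at most $\operatorname{findim}(A)$, so it would suffice to prove that each cosyzygy $\Omega^{-i}(A)$ merely has \emph{finite} projective dimension; this is plausible because the Auslander condition forces the grades of the relevant $\Ext$-modules to grow with $i$, which should obstruct the cosyzygies from acquiring infinite projective dimension. This matches the known logical position of the statement: the Gorenstein Symmetry Conjecture, whose form for algebras satisfying the Auslander condition is already recorded in Theorem~\ref{GorsymconjAG}, is a consequence of the finitistic dimension conjecture, so one expects a successful proof to assume or reprove a finitistic-dimension-type bound.

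The main obstacle is precisely the passage from local vanishing data to a global length bound. The grade conditions control, for each fixed $i$, the \emph{shape} of $\Ext^i_A(S,A)$ (by condition (4) of Theorem~\ref{AusGorcharacterisation} its composition factors have grade at least $i$), but they give no a priori ceiling on the index beyond which all such $\Ext$-groups vanish, that is, no bound on $\idim A_A$ itself. Converting ``every cosyzygy has finite projective dimension'' into ``the projective dimensions are uniformly bounded'' is exactly the kind of step the finitistic dimension conjecture is designed to supply and that has resisted direct attack; this is why the statement remains open. I would expect genuine progress to come either from a clever invariant, in the spirit of the Igusa-Todorov functions, that bounds the cosyzygy projective dimensions directly, or from isolating a restricted class of algebras where the grade condition can be leveraged to produce an explicit finite injective coresolution.
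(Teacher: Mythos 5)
There is no proof to compare against: the statement you were asked to prove is precisely the \emph{Auslander--Reiten Conjecture}, which the paper records as an open problem (it is introduced as ``the main open homological conjecture related to Auslander-Gorenstein algebras,'' going back to a question of Auslander and Reiten). The paper never proves it; it only situates it between the Generalised Nakayama Conjecture and the Nakayama Conjecture, and recalls Gu's theorem that, under the Auslander condition, being Iwanaga-Gorenstein is equivalent to having finite finitistic dimension. So any ``complete proof'' would have settled a long-standing open problem, and yours -- as you yourself concede in the final paragraph -- does not.

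That said, your reductions are sound and match the paper's framing. You are right that the whole content is $\idim A_A<\infty$: left-right symmetry of the $k$-Gorenstein property, Theorem~\ref{GorsymconjAG}, and Zaks' theorem correctly collapse the problem to one-sided finiteness, after which Theorem~\ref{AusGorcharacterisation} applies. Your route through the finitistic dimension is exactly the equivalence the paper attributes to Gu. Two technical remarks. First, the step you flag as ``plausible'' -- that each cosyzygy $\Omega^{-i}(A)$ has \emph{finite} projective dimension -- is in fact automatic and needs no grade argument: from $\pdim I^i\leq i$ and the short exact sequences $0\to\Omega^{-i}(A)\to I^i\to\Omega^{-(i+1)}(A)\to 0$ one gets $\pdim \Omega^{-i}(A)\leq i$ by induction. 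This shows the difficulty is not finiteness at each stage but solely the uniform bound, i.e.\ $\operatorname{findim}(A)<\infty$, which is the open finitistic dimension conjecture; your proposal is therefore a conditional proof (findim conjecture $\Rightarrow$ Auslander--Reiten Conjecture), which is a known implication, not a proof of the conjecture. Second, even granting a uniform bound, the termination of the coresolution is not immediate ``by minimality'' -- that passage is the content of Gu's theorem and deserves citation rather than assertion. The honest conclusion is that your write-up is a correct analysis of why the problem is hard, not a solution to it.
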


Auslander and Reiten formulated this as a question in \cite[Chapter 5]{AR}. In the same paper, they also raised the question whether $A$ being $k$-Gorenstein for all $k \geq 1$ implies that the finite finitistic dimension of $A$ is finite. Under the assumption that $A$ is $k$-Gorenstein for all $k \geq 1,$ being Iwanaga-Gorenstein is equivalent to having finite finitistic dimension, see \cite[Theorem 4.1]{Gu}. Thus, these two questions are equivalent to each other.
We refer to Conjecture \ref{Auslander-Reiten Conjecture} as the \textit{Auslander-Reiten Conjecture}.
We show now that this conjecture sits between two of the most important homological conjectures for finite-dimensional algebras that we recall now:

\begin{conjecture}
(Nakayama Conjecture) A finite-dimensional $K$-algebra of infinite dominant dimension is selfinjective.
\end{conjecture}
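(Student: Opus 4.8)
Since the Nakayama Conjecture is one of the oldest and still unresolved homological conjectures, I cannot expect to supply a complete proof; instead I will outline the line of attack that seems most natural and indicate the step that blocks it. The plan is to unwind the hypothesis $\domdim A = \infty$ directly. Writing a minimal injective coresolution $0 \to A \to I^0 \to I^1 \to \cdots$ of the regular module, the assumption $\domdim A = \infty$ says precisely that \emph{every} term $I^i$ is projective-injective, whereas self-injectivity of $A$ is the far stronger assertion that $A \cong I^0$, i.e. that the coresolution has length $0$. So the task reduces to showing that an algebra admitting an infinite injective coresolution all of whose terms are projective-injective must already be injective as a module over itself; equivalently, one wants to force the cosyzygies $\Omega^{-i}(A)$ to stabilise.

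The first reduction I would make is to the Generalized Nakayama Conjecture, which asserts that every indecomposable injective $A$-module occurs as a summand of some $I^i$ in the minimal injective coresolution of $A$. Granting this, infinite dominant dimension immediately yields the conjecture: since all $I^i$ are projective-injective, every indecomposable injective would be a summand of a projective module, so $DA$ is projective and $A$ is self-injective. Thus the entire problem is transported into proving that no indecomposable injective can be "missed" by the coresolution — a statement about which simple modules appear in the socles of the $I^i$. The second, more powerful route is to deduce the conjecture from a global finiteness statement, namely the Finitistic Dimension Conjecture, which is known to imply the Nakayama Conjecture; the rough idea is that an algebra of infinite dominant dimension which fails to be self-injective carries enough homological complexity to force $\operatorname{findim} A$ to be infinite, so that any bound on the finitistic dimension would already settle the matter.

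The main obstacle — and the reason the conjecture has resisted proof for decades — is the complete absence of any a priori finiteness that would terminate the infinite coresolution. Having each indecomposable injective reappear infinitely often among the $I^i$ is not visibly enough to force it to be projective, and there is no known homological invariant that both survives the passage to cosyzygies and detects when the resolution has genuinely stabilised. Bridging the gap from "appears infinitely often" to "is projective" is exactly where every elementary approach I can envisage breaks down, which is why all viable strategies route through one of the stronger conjectures above rather than attacking $\domdim A = \infty$ head on.
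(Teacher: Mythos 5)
The statement you were asked to prove is the Nakayama Conjecture itself; the paper states it as an open conjecture and contains no proof of it, so there is no proof to compare your attempt against. Your treatment is the appropriate one: you correctly recognize the statement as open rather than manufacturing a fake argument, and the reductions you sketch are accurate and consistent with the paper's surrounding discussion, which likewise records that the Generalised Nakayama Conjecture implies the Nakayama Conjecture and that both follow from the finitistic dimension conjecture. In particular, your argument that GNC plus $\domdim A = \infty$ forces every indecomposable injective to be a summand of a projective, hence $D(A)$ projective and $A$ selfinjective, is correct as stated. The only conditional route to the Nakayama Conjecture that the paper itself gives is part (2) of its Proposition on the Auslander-Reiten Conjecture: if $A$ satisfies that conjecture and $\domdim A = \infty$, then all terms $I^i$ of the minimal injective coresolution are projective, so $A$ satisfies the Auslander condition and is therefore Iwanaga-Gorenstein; a finite minimal coresolution whose last term is projective-injective would then split, forcing $A$ to be selfinjective. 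This is close in spirit to your outline but passes through the intermediate Auslander-Reiten Conjecture rather than through GNC or the finitistic dimension conjecture, which fits the paper's point that the Auslander-Reiten Conjecture sits strictly between the two conjectures you invoke.
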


\begin{conjecture}
(Generalised Nakayama Conjecture) Let $A$ be a finite-dimensional $K$-algebra. For every simple $A$-module $S$ we have $\grade S< \infty.$
\end{conjecture}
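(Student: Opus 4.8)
The statement asserts, in the grade formulation, that no simple module $S$ satisfies $\Ext_A^i(S,A)=0$ for all $i\geq 0$; equivalently, writing $\RHom_A(S,A)$ for the total derived functor, that $\RHom_A(S,A)\neq 0$ for every simple $S$. So the plan is to rule out, by contradiction, the existence of a nonzero simple module that is totally $A$-orthogonal in the sense $\RHom_A(S,A)=0$. The first reduction I would make is geometric: since $\Ext_A^{g}(S,A)\neq 0$ for $g=\grade S$ forces $\Hom_A(S,I^{g})\neq 0$, finiteness of $\grade S$ forces the indecomposable injective $I(S)$ to occur as a summand of some term $I^{g}$ of the minimal injective coresolution $0\to A\to I^0\to I^1\to\cdots$. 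The target then becomes the classical Auslander-Reiten reformulation, equivalent to the conjecture: every indecomposable injective $A$-module appears in the minimal injective coresolution of $A$.

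The cleanest handle on $\RHom_A(S,A)=0$ is that it says exactly that $S$ lies in the right orthogonal ${}^{\perp}\operatorname{perf}(A)$ of the perfect complexes inside $\Db(\mod A)$: the full subcategory of objects $X$ with $\RHom_A(S,X)=0$ is thick and contains $A$, hence contains $\operatorname{thick}(A)=\operatorname{perf}(A)$. This immediately disposes of the case $\gldim A<\infty$, for then $\operatorname{perf}(A)=\Db(\mod A)\ni S$, and $\RHom_A(S,S)=0$ contradicts $\id_S\neq 0$; so the conjecture is classically clear for algebras of finite global dimension. My main effort would therefore be to transport this orthogonality argument to the Iwanaga-Gorenstein case and, ultimately, to arbitrary finite-dimensional $A$, where $\operatorname{perf}(A)\subsetneq\Db(\mod A)$.

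Two standard auxiliary routes I would pursue in parallel are the reduction to the Nakayama Conjecture, through the Auslander-Reiten equivalence between finiteness of all grades of simples and the impossibility of a non-selfinjective algebra of infinite dominant dimension, and the reduction to the Finitistic Dimension Conjecture, which is known to imply the present statement; establishing $\operatorname{findim}A<\infty$ would therefore suffice. Either reduction would let one import the partial results already available, for instance for monomial algebras or for algebras satisfying extra homological hypotheses, as unconditional verifications on those classes.

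The hard part is that all of these reductions terminate at problems at least as difficult as the original: the derived-orthogonality obstruction is precisely the failure of $\operatorname{perf}(A)=\Db(\mod A)$, the Nakayama route requires controlling an infinite minimal injective coresolution whose socles $\soc I^i$ could conceivably omit a fixed simple forever, and the finitistic dimension route invokes a conjecture that is itself open. I therefore do not expect an unconditional proof by this plan; the realistic outcome is either an equivalent reformulation of, or a proof conditional on, one of the Nakayama, Strong Nakayama, or Finitistic Dimension Conjectures, together with unconditional verification on the tractable classes above.
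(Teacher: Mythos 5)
You should first note a mismatch of genre: the statement you were given is the \emph{Generalised Nakayama Conjecture}, which the paper records as an open conjecture and does not prove (it only situates it: it follows from the finitistic dimension conjecture, and in the paper's Proposition it is shown to imply the Auslander--Reiten Conjecture, which in turn implies the Nakayama Conjecture). So your refusal to claim an unconditional proof is exactly right, and most of your observations agree with what the paper uses: the identification $\Ext_A^i(S,A)\cong\Hom_A(S,I^i)$ for a minimal injective coresolution, hence the reformulation that $\grade S<\infty$ for all simples if and only if every indecomposable injective occurs in the coresolution of $A$ (this is precisely the mechanism in part (1) of the paper's Proposition), and the easy case $\gldim A<\infty$, where indeed $S\in\operatorname{perf}(A)=\Db(\mod A)$ and $\RHom_A(S,A)=0$ is impossible.

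Two of your claimed reductions, however, are stated incorrectly, and one of them points the wrong way. First, there is no known ``Auslander--Reiten equivalence'' between the Generalised Nakayama Conjecture and the Nakayama Conjecture: only the implication GNC $\Rightarrow$ NC is known (the paper proves the chain GNC $\Rightarrow$ Auslander--Reiten Conjecture $\Rightarrow$ NC, and none of these implications is known to reverse), so a ``reduction to the Nakayama Conjecture'' could never establish the present statement --- proving NC would prove a weaker conjecture, not this one. Second, your claim that ``establishing $\operatorname{findim}A<\infty$ would suffice'' is imprecise about sides: with the paper's convention of right modules and $\grade S$ computed via $\Ext_A^i(S,A)$, the standard argument (dualise a minimal projective resolution of $S$; if all $\Ext_A^i(S,A)$ vanish, the duals $\Hom_A(P_i,A)$ form exact complexes of \emph{left} modules whose cokernels have minimal projective resolutions of unbounded length) requires finiteness of the finitistic dimension of $A^{op}$, not of $A$; the conjecture-level implication FDC $\Rightarrow$ GNC quantifies over all algebras precisely for this reason. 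With these two corrections, your proposal is a fair map of the known reductions surrounding the conjecture, consistent with how the paper presents it.
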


We remark that it is also unknown whether we always have $\grade M < \infty$ for every non-zero $A$-module $M$. This is called the strong Nakayama Conjecture. The Generalised Nakayama Conjecture implies the Nakayama Conjecture, and both of these conjectures are a consequence of the famous finitistic dimension conjecture, which states that every finite-dimensional $K$-algebra has finite finitistic dimension. These are explained, for example, in the survey article \cite{Yam}.
Since we did not find a suitable reference, we prove now that the Auslander-Reiten Conjecture sits between the Nakayama and the Generalised Nakayama Conjecture:
\begin{proposition}
Let $A$ be a finite-dimensional $K$-algebra.
\begin{enumerate}
    \item If $A$ satisfies the Generalised Nakayama Conjecture, then it also satisfies the Auslander-Reiten Conjecture.
    \item If $A$ satisfies the Auslander-Reiten Conjecture, then $A$ satisfies the Nakayama Conjecture.
\end{enumerate}
\end{proposition}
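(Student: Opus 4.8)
The plan is to prove the two implications separately, using the grade reformulations of the Auslander condition together with the standard interpretation of the minimal injective coresolution of the regular module in terms of $\Ext$-groups of simple modules. Throughout I will use the detection of injective dimension by simples, $\idim M=\sup\{i\mid \Ext^i(S,M)\neq 0,\ S\text{ simple}\}$, and dually that an indecomposable injective $I(j)$ occurs in the $i$-th term of the minimal injective coresolution of $A_A$ exactly when $\Ext^i_A(S(j),A)\neq 0$.

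For (1), I assume that $A$ satisfies both the Auslander condition and the Generalised Nakayama Conjecture and aim to show $\idim A_A<\infty$, which by definition upgrades the Auslander condition to Auslander-Gorenstein. First I would record that the Auslander condition is equivalent to the grade condition (3) of Theorem \ref{AusGorcharacterisation} holding in every degree $i$: this is the content of the $k$-Gorenstein characterisation of \cite[Theorem 3.7]{FGR} and, crucially, does not presuppose that $A$ is Iwanaga-Gorenstein. I would then bound $\idim{}_AA$ rather than $\idim A_A$. Suppose for contradiction that $\idim{}_AA=\infty$. By detection of injective dimension by simples, for infinitely many $i$ there is a simple left module $T$ with $\Ext^i_{A^{op}}(T,A)\neq 0$; since there are finitely many simple left modules, one fixed $T$ works for infinitely many $i$. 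For each such $i$ the nonzero right module $\Ext^i_{A^{op}}(T,A)$ has a simple right submodule $U_i$, and condition (3) forces $\grade U_i\geq i$. As there are finitely many simple right modules, some fixed simple right module $U$ satisfies $\grade U\geq i$ for infinitely many $i$, so $\grade U=\infty$, contradicting the Generalised Nakayama Conjecture for $A$. Hence $\idim{}_AA<\infty$, and since $A$ satisfies the Auslander condition, Theorem \ref{GorsymconjAG} yields $\idim A_A<\infty$ as well, so $A$ is Auslander-Gorenstein.

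For (2), I assume $A$ satisfies the Auslander-Reiten Conjecture and that $\domdim A=\infty$, and I must show $A$ is selfinjective. Since $\domdim A=\infty$, every term $I^i$ of the minimal injective coresolution of $A_A$ is projective, so $\pdim I^i=0\leq i$ and $A$ satisfies the Auslander condition. Applying the Auslander-Reiten Conjecture, $A$ is Auslander-Gorenstein, whence $\idim A_A<\infty$ and, by Theorem \ref{GorsymconjAG}, $A$ is Iwanaga-Gorenstein. Over an Iwanaga-Gorenstein algebra every injective module has finite projective dimension, so characterisation (2) of Theorem \ref{AusGorcharacterisation} gives $\grade S(j)=\pdim I(j)<\infty$ for every $j$. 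Consequently each indecomposable injective $I(j)$ occurs in the term of degree $\grade S(j)$ of the minimal injective coresolution of $A_A$; but every term of this coresolution is projective, so each $I(j)$ is a summand of a projective module and is therefore projective. As every indecomposable injective is then projective, $A_A$ is injective, i.e. $A$ is selfinjective, which is the Nakayama Conjecture.

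The bookkeeping with sides and the detection lemmas are routine. The step I expect to be the real obstacle is the one in (1) where condition (3) of Theorem \ref{AusGorcharacterisation} is invoked under the Auslander condition alone: one must verify that the grade reformulation of \cite[Theorem 3.7]{FGR} is genuinely a per-degree characterisation of the $k$-Gorenstein property, so that it is available \emph{before} one knows $\idim A_A<\infty$, and that the variances of the $\Ext$-functors are tracked correctly so that the simple of infinite grade produced by the pigeonhole argument is a simple \emph{right} module, matching the hypothesis that $A$ itself, and not merely $A^{op}$, satisfies the Generalised Nakayama Conjecture.
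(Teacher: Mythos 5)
Your proof is correct, and both halves take genuinely different routes from the paper's. For (1), the paper argues directly and stays on the right side: the Generalised Nakayama Conjecture gives each simple right module $S$ a finite grade $i_S$, so $\Ext_A^{i_S}(S,A)\neq 0$, hence $I(S)$ occurs as a summand of $I^{i_S}$ in the minimal injective coresolution of $A$; the Auslander condition then forces $\pdim I(S)\leq i_S<\infty$ for every indecomposable injective, so the selfinjective dimension is finite, and Theorem \ref{GorsymconjAG} supplies the other side. You instead argue by contradiction through the left side (detection of $\idim {}_AA$ by simple left modules, two pigeonhole steps, and the grade condition (3)), which manufactures a simple right module of infinite grade. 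This works, but it hinges on exactly the point you flag: condition (3) must be available under the Auslander condition alone, before one knows the injective dimension is finite. That is indeed fine --- \cite[Theorem 3.7]{FGR} is stated for $k$-Gorenstein two-sided noetherian rings, degree by degree, with no finiteness assumption on the injective dimension (the Iwanaga-Gorenstein hypothesis in Theorem \ref{AusGorcharacterisation} is only there because being Auslander-Gorenstein requires it) --- but the paper's route avoids this heavier input entirely, needing only the definition of the Auslander condition and the standard fact that $\Ext_A^i(S,A)\neq 0$ exactly when $I(S)$ is a summand of $I^i$. For (2), the two proofs coincide up to concluding from the Auslander-Reiten Conjecture that $A$ is Iwanaga-Gorenstein; the paper then finishes in one line: the minimal injective coresolution is now finite with projective-injective last term, so its last map splits, contradicting minimality unless the coresolution has length zero, i.e.\ $A$ is selfinjective. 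You instead make a second appeal to the Auslander-Gorenstein machinery (characterisation (2) of Theorem \ref{AusGorcharacterisation}, plus finiteness of the projective dimension of injectives over Iwanaga-Gorenstein algebras) to show that every indecomposable injective occurs in the all-projective coresolution, hence is projective, and then conclude by counting indecomposables. Both arguments are sound; the paper's splitting trick is slicker and self-contained, while yours has the mild virtue of exhibiting explicitly in which degree each injective appears.
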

\begin{proof}
Let $0 \rightarrow A \rightarrow I^0 \rightarrow I^1 \rightarrow \cdots$ be a minimal injective coresolution of $A$.
\begin{enumerate}
    \item $\grade S < \infty$ for all simple modules $S$, tells us that for every simple module $S$ there is some index $i_S$ for which $\Ext_A^{i_S}(S,A) \neq 0$ .
    But $\Ext_A^{i_S}(S,A) \neq 0$ if and only if the injective envelope $I(S)$ of $S$ appears as a direct summand of $I^{i_S}$.
    Thus, every indecomposable injective $A$-module is a direct summand of some $I^i$.
    Let now $I$ be an arbitrary indecomposable injective $A$-module that is a direct summand of $I^i$. If we assume that $A$ satisfies the Auslander condition, then
    $\pdim I \leq \pdim I^i \leq i$.
    In particular, $\idim A_A = \pdim D({}_AA) < \infty$. By Theorem \ref{GorsymconjAG}, we also have $\idim {}_AA< \infty$ and $A$ is Iwanaga-Gorenstein.
    \item Assume that $A$ has infinite dominant dimension, which is equivalent to $\pdim I^i =0$ for all $i \geq 0$. In particular, $\pdim I^i \leq i$ for all $i \geq 0$ and $A$ satisfies the Auslander condition and since we assume that $A$ also satisfies the Auslander-Reiten Conjecture, $A$ is Iwanaga-Gorenstein. 
    Thus, there is a finite minimal injective coresolution
    $0 \rightarrow A \rightarrow I^0 \rightarrow I^1 \rightarrow \cdots \rightarrow I^{n-1} \xrightarrow{f} I^n \rightarrow 0.$
    However, the condition that $I^n$ is projective-injective would imply that $f$ is split, and thus, the resolution cannot be minimal unless $A$ is itself injective.
    Thus, $A$ is selfinjective.
\end{enumerate}
\end{proof}

There are two important homological bijections associated with Auslander-Gorenstein algebras. We will recall these here. The following definitions are also listed in \cite{KMT} along with further bijections related to representation theory and combinatorics. 
The first bijection is due to Auslander and Reiten:

\begin{theorem}
\cite[Proposition 5.4]{AR}]
Let $A$ be an Auslander-Gorenstein algebra. Then there is a bijection 
$$\psi: \ \{\text{indecomposable injective A-modules }\}/_{\cong} \rightarrow \{\text{ indecomposable projective A-modules }\}/_{\cong}$$ $$I\mapsto P=\Omega^{d}(I),$$ where $d$ is the projective dimension of $I$. Moreover, $P=\Omega^d(I)$ has injective dimension equal to $d$. 
\end{theorem}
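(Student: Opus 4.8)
The plan is to exhibit an explicit two-sided inverse for $\psi$ given by the cosyzygy construction, and to extract all of the content by reading a single minimal resolution in two different ways.

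First I would record the consequences of the hypothesis. Since $A$ is Auslander-Gorenstein it is in particular Iwanaga-Gorenstein, so a module has finite projective dimension if and only if it has finite injective dimension; in particular every indecomposable injective $I$ has $d:=\pdim I<\infty$, and as the terminal syzygy of a minimal projective resolution $0\to \Omega^{d}I\to P_{d-1}\to\cdots\to P_{0}\to I\to 0$ the module $\Omega^{d}I$ is projective. Thus $\psi$ is at least a well-defined map into projective modules, and the candidate inverse is the cosyzygy map $\phi\colon P\mapsto \Omega^{-\idim P}P$ defined on indecomposable projectives.

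The heart of the argument is the following structural claim: the intermediate terms $P_{0},\dots,P_{d-1}$ of the minimal projective resolution of an indecomposable injective $I$ are projective-injective. Granting this, the displayed exact sequence, read from left to right, is simultaneously a minimal injective coresolution of $\Omega^{d}I$ (its inner terms $P_{d-1},\dots,P_{0}$ are injective, its last term $I$ is injective, and minimality is preserved since all differentials are radical maps). This immediately yields $\idim \Omega^{d}I=d$, which is the final assertion of the theorem, together with $\Omega^{-d}(\Omega^{d}I)\cong I$, i.e. $\phi\circ\psi=\id$. Applying the same claim to $A^{\mathrm{op}}$---legitimate because being Auslander-Gorenstein is left-right symmetric---gives the dual statement, that the minimal injective coresolution of an indecomposable projective $Q$ has projective-injective inner terms; reading it backwards as a minimal projective resolution gives $\psi\circ\phi=\id$, so that $\phi$ lands in indecomposable injectives. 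Indecomposability of $\Omega^{d}I$ then follows formally: the minimal projective resolution of the indecomposable $I$ is an indecomposable complex (its derived endomorphism ring is the local ring $\End_{A}(I)$), hence the minimal injective coresolution it is identified with in the previous step cannot split off a proper summand, which a nontrivial decomposition of $\Omega^{d}I$ would produce. Having both a left and a right inverse, $\psi$ is a bijection.

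The main obstacle is the structural claim, and it is precisely here that the full Auslander condition---rather than mere Iwanaga-Gorensteinness---is used; for a general Iwanaga-Gorenstein algebra the inner terms need not be injective, and then the map $I\mapsto\Omega^{d}I$ need neither land in indecomposable projectives nor satisfy $\idim\Omega^{d}I=\pdim I$. I would attack the claim through the characterisation $\pdim I=\grade\soc I$ of part (2) of Theorem~\ref{AusGorcharacterisation}, together with the grade estimates of parts (3)--(4), which control the composition factors of the modules $\Ext^{i}_{A}(-,A)$. Concretely, the claim is equivalent to the equality $\domdim\Omega^{d}I=\idim\Omega^{d}I\,(=d)$ for the projective $\Omega^{d}I$, so the task reduces to showing that for each $0\le j<d$ the projective cover of the syzygy $\Omega^{j}I$ is projective-injective. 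Translating this into a statement about the grades of the relevant simple tops and matching it against the bound supplied by the Auslander condition is the delicate point; once the structural claim is in hand, everything else is formal bookkeeping with syzygies and the exact duality $D$.
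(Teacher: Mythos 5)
Your argument has a genuine gap at precisely the place you identify as its heart: the structural claim that the intermediate terms $P_0,\dots,P_{d-1}$ of the minimal projective resolution of an indecomposable injective are projective-injective is \emph{false} for general Auslander-Gorenstein algebras. Dualised (equivalently, applied to $A^{\mathrm{op}}$), your claim says that every indecomposable projective module $P$ satisfies $\idim P \leq \domdim P$; but that is exactly the defining condition of the \emph{dominant} Auslander regular/Gorenstein algebras of \cite{CIMdom}, which form a proper subclass of Auslander-Gorenstein algebras --- this is the very reason that notion was introduced. A concrete counterexample is the commutative square, i.e.\ the algebra $A=KQ_2/\langle ab-cd\rangle$ of the Example following Theorem \ref{diagausreg2}, which is the incidence algebra of the diamond lattice $1<2,3<4$. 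This algebra is Auslander regular (for instance by Theorem \ref{homologicaldimensionsandausreg}, since the diamond is distributive), its unique indecomposable projective-injective module is $P(1)\cong I(4)$, and the minimal projective resolution of the injective simple module $I(1)=S(1)$ is
$$0 \rightarrow P(4) \rightarrow P(2)\oplus P(3) \rightarrow P(1) \rightarrow I(1) \rightarrow 0,$$
whose middle term $P(2)\oplus P(3)$ is not injective. Equivalently, $\domdim P(4)=1$ while $\idim P(4)=2$, so $\domdim \Omega^{d}I = \idim \Omega^{d}I$ fails here.

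Because of this, the identification of the reversed projective resolution with a minimal injective coresolution of $\Omega^{d}I$ collapses, and with it every subsequent step: your derivations of $\idim\Omega^{d}I=d$, of $\phi\circ\psi=\id$ and $\psi\circ\phi=\id$, and of the indecomposability of $\Omega^{d}I$ all rest on it. The conclusions themselves remain true --- in the example, $\idim P(4)=2=\pdim I(1)$ and $\psi$ is the rowmotion bijection of the lattice --- but they cannot be reached along the proposed route, and no amount of grade bookkeeping will rescue the structural claim, since your reduction of it to $\domdim\Omega^{d}I=\idim\Omega^{d}I$ shows you would be proving the strictly stronger dominant-Auslander-Gorenstein property. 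A correct argument has to work with the grade machinery directly, controlling the modules $\operatorname{Ext}_A^{i}(-,A)$ and their composition factors via conditions (3)--(4) of Theorem \ref{AusGorcharacterisation} and the double-Ext duality between $\mod A$ and $\mod A^{\mathrm{op}}$, rather than through an injectivity statement about the terms of the resolution; this is how Auslander and Reiten proceed in \cite[Proposition 5.4]{AR}, which is the proof the survey cites (the survey itself gives no independent argument).
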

The inverse of this bijection, $\psi^{-1}$ maps an indecomposable projective $A$-module $P$ to $I=\Omega^{-d}(P)$ where $d=\idim(P).$ This bijection induces a permutation on the isomorphism classes of the simple $A$-modules, or equivalently, on their index set. 
We define the \emph{Auslander-Reiten permutation} of an Auslander-Gorenstein algebra with $n$ simple modules as $\hat{\psi}: \{1,\ldots,n\} \rightarrow \{1,\ldots,n\}$ where $\hat{\psi}(i)=j$ if $\psi(I(i))=P(j)$ for indecomposable injective modules $I(i)$ and indecomposable projective modules $P(j)$.

There is another homological bijection on the simple modules of an Auslander-Gorenstein algebra, which  was first discovered by Iyama:
\begin{theorem}
\cite[Theorem 2.10]{Iyasym}
Let $A$ be an Auslander-Gorenstein algebra. The following gives a well-defined bijection between the simple $A$-modules: $$\phi: \{\text{ simple A-modules }\}/_{\cong} \rightarrow \{ \text{ simple A-modules } \}/_{\cong}$$  $$S\mapsto \top(D\Ext_A^{g_S}(S,A)),$$ where $g_S:=\inf \{i \geq 0 \mid \Ext_A^i(S,A) \neq 0 \}$ is the grade of $S$. 
\end{theorem}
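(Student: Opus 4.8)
The plan is to exploit that the standard duality $D=\Hom_K(-,K)$ is an exact contravariant equivalence, so it interchanges socles and tops: for any left module $N$ one has $\top(DN)\cong D(\soc N)$. Applying this to the left $A$-module $N_S:=\Ext_A^{g_S}(S,A)$ rewrites the map as $\phi(S)=D\bigl(\soc N_S\bigr)$, and since $D$ sends simple left modules bijectively to simple right modules, the statement reduces to two claims about $N_S$: \emph{(i) well-definedness}, that $\soc N_S$ is a single simple left module (so that $\phi(S)$ is simple); and \emph{(ii) bijectivity}, that $\sigma:S\mapsto\soc N_S$ is a bijection from simple right to simple left modules. Throughout I would record the grade bookkeeping from Theorem \ref{AusGorcharacterisation}: by condition (3) every submodule of $\Ext_A^i(C,A)$ has grade at least $i$, by condition (4) every composition factor of $\Ext_A^i(S,A)$ has grade at least $i$, and $\Ext_A^i(-,A)=0$ for $i>\idim A_A<\infty$, so each $g_S$ is finite and $N_S\neq 0$.

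The engine for both claims is that $E^i:=\Ext_A^i(-,A)$, evaluated in the single degree $i=\grade$, behaves as a duality on modules of pure grade $i$. To extract this I would run the biduality spectral sequence with second page $E_2^{p,q}=\Ext_{A^{op}}^p(\Ext_A^q(M,A),A)$ abutting to $\RHom_{A^{op}}(\RHom_A(M,A),A)$, which is bounded since $\Ext_A^i(-,A)=0$ for $i>\idim A_A$. Condition (3) forces $E_2^{p,q}=0$ for $p<q$, so the page lies on and below the diagonal, and the diagonal entries $E^q(E^q(M))$ assemble into a finite filtration of the zeroth biduality cohomology with layers of strictly increasing grade. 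Specialising to $M=S$ simple of grade $g=g_S$ and combining with the natural biduality transformation $S\to\RHom_{A^{op}}(\RHom_A(S,A),A)$, every layer of grade $>g$ must vanish—a nonzero one would be a subquotient of the simple module $S$ and would force $\grade S>g$—leaving a natural isomorphism $E^g(E^g(S))\cong S$. Thus $N_S=E^g(S)$ has grade $g$ and is reflexive for $E^g$, and the left--right symmetry of the Auslander condition (\cite[Theorem 3.7]{FGR}) makes $E^g$ and its opposite-side analogue mutually inverse on the pure-grade-$g$ layers of the two sides.

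Under this duality the simple module $S$, which is automatically a simple object of the pure-grade-$g$ layer, corresponds to a simple object of the opposite layer; unwinding what a simple object of the layer means for an honest module shows that $N_S$ has a unique grade-$g$ simple submodule that exhausts $\soc N_S$, which is claim (i), and by symmetry the same holds for $\Ext_{A^{op}}^{g}(T,A)$ for any simple left module $T$ of grade $g$. For claim (ii) I would set $\sigma(S)=\soc N_S$ and let $\sigma^{op}$ denote the symmetric construction for simple left modules. Applying $E^g$ to the socle inclusion $\sigma(S)\hookrightarrow N_S$ and using $E^g(N_S)\cong S$ yields a nonzero map $S\to\Ext_{A^{op}}^{g}(\sigma(S),A)$; since $S$ is simple and, by (i) applied to $\sigma(S)$, the target has simple socle of grade $g$, this map is an isomorphism onto that socle, giving $\sigma^{op}(\sigma(S))\cong S$. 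The symmetric argument gives $\sigma\circ\sigma^{op}=\id$, so $\sigma$ is a grade-preserving bijection between simple right and simple left modules; composing with $D$ then shows $\phi=D\circ\sigma$ is a bijection with inverse $\phi^{-1}(T)=\sigma^{op}(DT)$.

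The main obstacle is the passage from the \emph{formal} grade-$g$ duality to the \emph{honest} module assertions in (i) and (ii). Over a merely Iwanaga--Gorenstein algebra the naive biduality $M\cong\RHom_{A^{op}}(\RHom_A(M,A),A)$ may fail for modules of infinite projective dimension, so the spectral-sequence analysis has to be carried out on the grade filtration—equivalently on the Serre subquotients $\{\grade\geq i\}/\{\grade\geq i+1\}$—rather than on honest module subcategories, and one must then descend the abstract simple-object conclusion to the concrete statement that $\soc N_S$ is a genuine simple module of grade exactly $g$, with no stray higher-grade composition factors entering the socle. Pinning down all off-diagonal terms through condition (3), verifying that the biduality map is an isomorphism in the relevant degree, and separately handling the boundary case $g_S=0$ (where $E^0=\Hom_A(-,A)$ and $N_S$ must be compared directly with the projective left modules) are precisely the points where the full force of the Auslander condition is required.
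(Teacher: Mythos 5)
The survey itself contains no proof of this theorem: it is quoted directly from Iyama \cite[Theorem 2.10]{Iyasym}. So your proposal can only be measured against the known Bj\"ork--Iyama line of argument, whose general shape you have correctly identified (reduce via $D$ to statements about socles, then exploit the grade filtration and the dualities $\Ext^i_A(-,A)$ on the Serre subquotients $\{\grade\geq i\}/\{\grade\geq i+1\}$). However, two of your steps are genuinely defective. First, the module-level isomorphism $\Ext^g_{A^{op}}(\Ext^g_A(S,A),A)\cong S$ that you extract from the biduality spectral sequence is false. The filtration layers of $S$ are the $E_\infty$-terms, not the diagonal $E_2$-terms: there are indeed no differentials \emph{into} the diagonal, but there are differentials \emph{out of} it, landing in subquotients of $\Ext^{q+r}_{A^{op}}(\Ext^{q+r-1}_A(S,A),A)$, and these need not vanish. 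What the spectral sequence actually yields is a natural embedding $S\hookrightarrow \Ext^g_{A^{op}}(\Ext^g_A(S,A),A)$ whose cokernel has grade at least $g+2$, i.e.\ an isomorphism only in the Serre quotient. A concrete counterexample is the paper's own $\operatorname{sl}_2$ example, the Auslander regular algebra $A=KQ/\langle ab\rangle$ with two vertices and arrows $a\colon 1\to 2$, $b\colon 2\to 1$: for the unique simple module $S$ of grade $0$, the left module $\Hom_A(S,A)$ is two-dimensional uniserial, and $\Hom_{A^{op}}(\Hom_A(S,A),A)$ is the two-dimensional indecomposable projective right module, not $S$.

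Second, and this is the heart of the theorem, well-definedness requires that $\soc \Ext^g_A(S,A)$ be a single simple module of grade exactly $g$, equivalently that $\Ext^g_A(S,A)$ admit no simple submodule of grade $>g$. Your text reduces this to ``unwinding what a simple object of the layer means'', but the Serre-quotient information cannot deliver it: simplicity of the image of $N_S$ in $\{\grade\geq g\}/\{\grade\geq g+1\}$ says only that $N_S$ has exactly one composition factor of grade $g$; it puts no constraint on where that factor sits in $N_S$ and does not exclude grade-$(g{+}1)$ simples from the socle. You explicitly defer this descent to the ``points where the full force of the Auslander condition is required'' and never supply it, so the proposal stops precisely at the step that makes the theorem nontrivial. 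One way to close it: for a simple submodule $T\hookrightarrow \Ext^g_A(S,A)$, compare the two hypercohomology spectral sequences of $\RHom_{A^{op}}(T,\RHom_A(S,A))\cong \RHom_A(S,\RHom_{A^{op}}(T,A))$. In the first, $\Hom_{A^{op}}(T,\Ext^g_A(S,A))\neq 0$ is a permanent cycle (all outgoing differentials land in rows below the grade of $S$, which vanish), so the total cohomology is nonzero in degree $g$; the second vanishes in all total degrees $<\grade T$, forcing $\grade T\leq g$, and condition (3) of Theorem \ref{AusGorcharacterisation} gives $\grade T\geq g$, hence $\grade T=g$; uniqueness of the grade-$g$ composition factor then makes the socle simple. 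Finally, your bijectivity argument invokes the false isomorphism from the first gap; it is repairable, but only once claim (i) is available: then $N_S/\soc N_S$ has all composition factors of grade $\geq g+1$, so $\Ext^g_{A^{op}}(N_S,A)\to\Ext^g_{A^{op}}(\soc N_S,A)$ is injective, and composing with the natural embedding $S\hookrightarrow\Ext^g_{A^{op}}(\Ext^g_A(S,A),A)$ exhibits $S$ as the socle of $\Ext^g_{A^{op}}(\soc N_S,A)$, which gives $\sigma^{op}\sigma\cong\id$ as you intended.
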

We call this bijection the \emph{grade bijection} of the Auslander-Gorenstein algebra $A$.
The cograde of the simple module $S$ is defined dually, i.e. as $\inf \{i \geq 0 \mid \Ext_A^i(D(A),S) \neq 0 \}.$ An important property of $\phi$ is that the cograde of $\phi(S)$ equals the grade of $S$.

This also induces a permutation on the index set of the simple modules. We define the \emph{grade permutation} of an Auslander-Gorenstein algebra with $n$ simple modules as $\hat{\phi}: \{1,\ldots,n\} \rightarrow \{1,\ldots,n\}$ where $\hat{\phi}(i)=j$ if $\phi(S(i))=S(j)$.

We have seen two permutations on the index set of the simple modules of an Auslander-Gorenstein algebra.
Our next theorem states that these two permutations coincide:
\begin{theorem} \label{ARpermequalgradeperm}
\cite[See Theorem 1.1]{KMT}
Let $A$ be an Auslander-Gorenstein algebra.
Then the Auslander-Reiten permutation coincides with the grade permutation.
\end{theorem}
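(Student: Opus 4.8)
The plan is to reduce the equality of the two permutations to a statement about a single simple module and its injective envelope, and then to identify the relevant simple top appearing on each side. Fix a vertex $i$, write $S=S(i)$ and let $I=I(i)$ be its injective envelope, so that $\soc I = S$. Set $g:=\grade S$. By the equivalence of (1) and (2) in Theorem \ref{AusGorcharacterisation} we have $\pdim I = \grade \soc I = g$, so the Auslander--Reiten bijection sends $I$ to the indecomposable projective $\psi(I)=\Omega^{g}(I)=P(\hat{\psi}(i))$, whence $\top\,\Omega^{g}(I)\cong S(\hat{\psi}(i))$. On the grade side, $\phi(S)=\top\bigl(D\Ext_A^{g}(S,A)\bigr)=S(\hat{\phi}(i))$. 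Thus it suffices to prove, for every $i$, the isomorphism of simple modules
$$\top\,\Omega^{g}(I(i))\;\cong\;\top\bigl(D\Ext_A^{g}(S(i),A)\bigr).$$

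To attack this I would dualise. Since $I(i)=D(Ae_i)$ with $Ae_i$ the indecomposable projective left $A$-module, a minimal projective resolution of $I(i)$ is obtained by applying $D$ to a minimal injective coresolution of $Ae_i$; note $\idim {}_A(Ae_i)=\pdim I(i)=g$, so this coresolution has length $g$ and its top term $J$ is an indecomposable injective left module with $\Omega^{g}(I(i))=D(J)$. Hence $\top\,\Omega^{g}(I(i))=D(\soc J)$. For the other side, I would compute $\Ext_A^{g}(S(i),A)$ from a minimal projective resolution of $S(i)$ by applying $\Hom_A(-,A)$; as $g=\grade S(i)$ this is the first non-vanishing cohomology, and $\top\bigl(D\Ext_A^{g}(S(i),A)\bigr)=D\bigl(\soc \Ext_A^{g}(S(i),A)\bigr)$. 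The problem therefore becomes the identification of the two simple left modules $\soc J$ and $\soc \Ext_A^{g}(S(i),A)$.

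The heart of the argument is precisely this socle matching, and this is where the full strength of the Auslander condition, rather than merely Iwanaga--Gorensteinness, must be used. Both objects live in homological degree $g$, but one arises from the injective cosyzygies of $Ae_i$ and the other from the cohomology of the dualised projective resolution of $S(i)$; showing they select the same simple requires controlling which simples can appear. Here I would invoke characterisations (3) and (4) of Theorem \ref{AusGorcharacterisation}: the condition that every composition factor of $\Ext_A^{g}(S,A)$ has grade at least $g$ forces this $\Ext$-module to have a simple socle and prevents contributions of strictly smaller grade from the lower syzygies from interfering, which pins down the socle in terms of the degree-$g$ data. Translating the purely cohomological description of $\Ext_A^{g}(S(i),A)$ into the concrete socle $\soc J$ coming from the injective cosyzygy — i.e.\ matching the bookkeeping of the minimal projective resolution of $S(i)$ against the minimal injective coresolution of $Ae_i$ — is the main obstacle, and it is exactly the step that breaks down without the Auslander condition.

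I expect the remaining verifications (that $\psi(I(i))$ is indecomposable projective, that $\Omega^{g}(I(i))=D(J)$, and that taking $D$ interchanges tops and socles throughout) to be routine, so that the entire weight of the proof rests on the grade-controlled socle identification described above.
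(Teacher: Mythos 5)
Your reduction is sound as far as it goes: by Theorem \ref{AusGorcharacterisation}(2) you get $\pdim I(i)=\grade S(i)=g$; dualising, the minimal projective resolution of $I(i)=D(Ae_i)$ is the $D$-dual of the minimal injective coresolution $0\to Ae_i\to J^0\to\cdots\to J^g\to 0$ of the left module $Ae_i$, so $\Omega^g(I(i))=D(J^g)$ and $\top\Omega^g(I(i))=D(\soc J^g)$, while $\top\bigl(D\Ext^g_A(S(i),A)\bigr)=D\bigl(\soc\Ext^g_A(S(i),A)\bigr)$. So the theorem is indeed equivalent to the socle identification $\soc J^g\cong\soc\Ext^g_A(S(i),A)$, and the simplicity of the two socles is quotable (indecomposability of $J^g$ from the Auslander--Reiten bijection theorem, simplicity of $\soc\Ext^g_A(S(i),A)$ from Iyama's theorem that the grade bijection is well defined).

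The genuine gap is that this identification, which is the entire content of the theorem, is never proved. The appeal to characterisations (3) and (4) of Theorem \ref{AusGorcharacterisation} is not an argument: knowing that every composition factor (or submodule) of $\Ext^g_A(S(i),A)$ has grade at least $g$ neither forces the socle to be simple nor, more importantly, tells you \emph{which} simple it is --- grade bounds cannot distinguish between two different grade-$g$ simples, and that is exactly what is at stake. Moreover, nothing in your sketch connects the complex $\Hom_A(Q_\bullet,A)$, computing $\Ext^g_A(S(i),A)$ from a minimal projective resolution $Q_\bullet$ of the \emph{right} module $S(i)$, with the coresolution $J^\bullet$ of the \emph{left} module $Ae_i$: these are resolutions of different modules on opposite sides of the algebra, and tying them together is where all the work lies. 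The missing input is the symmetry/duality theory of Auslander--Gorenstein algebras, for instance Iyama's result \cite{Iyasym} that on grade-$g$ simples the assignments $S\mapsto\soc\Ext^g_A(S,A)$ and $T\mapsto\soc\Ext^g_{A^{op}}(T,A)$ are mutually inverse dualities, combined with an observation such as $\Ext^g_{A^{op}}(\soc J^g,Ae_i)\cong\Hom_{A^{op}}(\soc J^g,J^g)\neq 0$ (minimality of $J^\bullet$ makes the induced differentials on $\Hom_{A^{op}}(T,J^\bullet)$ vanish for $T$ simple). Without an ingredient of this kind your proposal only restates the theorem in an equivalent form; this is consistent with the fact that the survey itself gives no proof but cites \cite{KMT}, where precisely this nontrivial identification is carried out.
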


Later we will see several explicit descriptions of the Auslander-Reiten permutation for special classes of algebras, such as incidence algebras of distributive lattices, Nakayama algebras, and gentle algebras.
Motivated by the Auslander-Reiten bijection for Auslander-Gorenstein algebras, we give the following definition:

\begin{definition}
Let $A$ be a finite-dimensional $K$-algebra.
We say that $A$ has a \emph{well-defined Auslander-Reiten map} if every indecomposable injective $A$-module $I$ has a finite minimal projective resolution and the last term of this resolution is an indecomposable projective module. 
\end{definition}
For example, every Nakayama algebra that is Iwanaga-Gorenstein has a well-defined Auslander-Reiten map.
If $A$ is an algebra with $n$ simple modules enumerated by $\{1,\ldots,n\}$ with a well-defined Auslander-Reiten map, then the \emph{Auslander-Reiten map} of $A$ is defined as the map $\zeta: \{1,\ldots,n \} \rightarrow \{1,\ldots,n\}$ where $\zeta(i)=j$ if the last non-zero term in the minimal projective resolution of the indecomposable injective module $I(i)$ is isomorphic to $P({\zeta(i)}).$
We pose the following conjecture:

\begin{conjecture}
\label{conj::AGiffARbijwelldef}
Let $A$ be a finite-dimensional $K$-algebra over an algebraically closed field $K$.
Then $A$ is Auslander-Gorenstein if and only if $A$ has a well-defined Auslander-Reiten map that is a bijection. 
\end{conjecture}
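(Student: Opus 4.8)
I would split the statement into its two implications, the forward one being routine and the converse being the real content.

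\smallskip

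For the forward implication, I would simply invoke the Auslander--Reiten bijection $\psi$ recalled above. If $A$ is Auslander--Gorenstein it is Iwanaga--Gorenstein, so $\idim A_A=\pdim D({}_AA)<\infty$ and every indecomposable injective $I$, being a summand of $D({}_AA)$, has finite projective dimension $d:=\pdim I$. Its minimal projective resolution then has the form $0\to P_d\to P_{d-1}\to\cdots\to P_0\to I\to 0$ with $P_d=\Omega^{d}(I)=\psi(I)$, and since $\psi(I)$ is an indecomposable projective module, the last term of the resolution is indecomposable projective. Hence $A$ has a well-defined Auslander--Reiten map, and the induced map $\zeta$ equals $\hat\psi$, which is a bijection because $\psi$ is.

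\smallskip

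For the converse, my plan is to reduce ``$A$ is Auslander--Gorenstein'' to the Auslander condition. Assuming $A$ has a well-defined bijective Auslander--Reiten map, write $d_i:=\pdim I(i)$; since every indecomposable injective has finite projective dimension, $\idim A_A=\pdim D({}_AA)=\max_i d_i<\infty$. It then suffices to establish the Auslander condition, because Theorem~\ref{GorsymconjAG} together with $\idim A_A<\infty$ forces $\idim{}_AA<\infty$, so $A$ is Iwanaga--Gorenstein, and the Auslander condition together with Iwanaga--Gorensteinness is exactly the definition of Auslander--Gorenstein. To analyse the Auslander condition I would use the minimal injective coresolution $0\to A\to I^0\to\cdots\to I^d\to 0$ of $A_A$. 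By minimality the differential kills each $\soc I^j$, so $\Ext_A^j(S(i),A)\cong\Hom_A(S(i),I^j)$ and therefore $\grade S(i)=\inf\{j: I(i)\mid I^j\}$; passing to this minimal coresolution and using that flat and projective dimension coincide here, the Auslander condition is equivalent to $\pdim I^j\le j$ for all $j$, which in turn is equivalent to the family of inequalities $d_i\le\grade S(i)$ for every $i$. The whole problem is thus reduced to deriving these inequalities from the bijectivity of $\zeta$.

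\smallskip

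The main obstacle is precisely this last step, and it is why the statement is posed as a conjecture rather than a theorem. The map $\zeta$ is defined through the \emph{projective resolutions of the indecomposable injectives}, whereas the grades $\grade S(i)$ are read off from the \emph{minimal injective coresolution of $A$}; these are a priori unrelated homological constructions, and the bijectivity of $\zeta$ is the only bridge between them. The approach I would try is to apply the duality $D$ to the minimal projective resolution of $I(i)$, turning it into the minimal injective coresolution of the indecomposable projective $A^{op}$-module $D(I(i))$, whose last term is the indecomposable injective $D(P(\zeta(i)))$; assembling this data over all $i$ and exploiting that $\zeta$ permutes $\{1,\dots,n\}$ (here the algebraically closed hypothesis is used, so that simples, projectives and injectives are indexed bijectively by the vertices), I would hope to force the required matching of projective dimensions and grades by a counting argument. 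I expect this to be genuinely hard --- the converse lies between the Generalised Nakayama and the Nakayama Conjectures in spirit and is closely tied to the Auslander--Reiten Conjecture~\ref{Auslander-Reiten Conjecture} --- so a realistic first step is to confirm it for the combinatorially tractable classes mentioned above (Nakayama, gentle, and incidence algebras of distributive lattices), where both $\zeta$ and the grades $\grade S(i)$ can be computed explicitly, before attempting the general case.
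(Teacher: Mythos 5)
This statement is posed in the paper as a conjecture, not a theorem: the paper contains no proof of it, and only records that it has been verified for monomial algebras \cite[Theorem 7.9]{Kla} and for incidence algebras of lattices \cite{IKKM}. Your proposal handles this situation correctly. The forward implication is proved exactly as the paper's setup suggests: it follows immediately from the Auslander--Reiten bijection of \cite[Proposition 5.4]{AR}, and your identification of the last term of the minimal projective resolution with $\Omega^{d}(I)=\psi(I)$ is right. Your reduction of the converse is also sound: under the hypothesis, $\idim A_A=\pdim D({}_AA)=\max_i \pdim I(i)<\infty$; minimality of the injective coresolution gives $\Ext^j_A(S(i),A)\cong\Hom_A(S(i),I^j)$, so the Auslander condition is indeed equivalent to the inequalities $\pdim I(i)\le\grade S(i)$ for all $i$, and together with Theorem~\ref{GorsymconjAG} this would yield that $A$ is Auslander--Gorenstein. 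Deriving those inequalities from the bijectivity of the Auslander--Reiten map is precisely the open content of Conjecture~\ref{conj::AGiffARbijwelldef}, and you were right not to claim it; your suggested fallback of verifying the combinatorially tractable classes is exactly what the works cited in the paper do.
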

We will see in a later chapter that this conjecture has a positive answer for monomial algebras and for incidence algebras of lattices.

The Auslander-Gorenstein property is a condition that can be checked by looking at the minimal injective coresolution of the regular module. We will now see that being Auslander-Gorenstein has several global implications for the module category.

\begin{definition}
Let 
$$0 \rightarrow A \rightarrow I^0 \rightarrow I^1 \rightarrow \cdots$$ be a minimal injective coresolution of $A$. An integer
$\ell \geq 0$ is a \emph{dominant number} of $A$ if $\pdim I^i < \pdim I^{\ell}$ for any $0 \leq i < \ell$. 
\end{definition}

The dominant numbers of an Auslander-Gorenstein algebra give global information about the module category of $A$:
\begin{theorem}
Let $A$ be Auslander-Gorenstein.
Then the set of dominant numbers of $A$ coincides with the set $\{ \grade X \mid X \in \mod A \}$ as well as with the set $\{ \grade S \mid S \ \text{is simple} \}$.
\end{theorem}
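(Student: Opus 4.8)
The plan is to translate everything into the combinatorics of the minimal injective coresolution $0 \to A \to I^0 \to I^1 \to \cdots$ of the regular module. First I would record the standard fact that the multiplicity of the indecomposable injective $I(j)$ in the term $I^i$ equals $\dim_K \Ext_A^i(S(j),A)$; in particular $I(j)$ occurs for the first time in degree exactly $\grade S(j)$. Writing $g_j := \grade S(j)$ and $d_\ell := \pdim I^\ell$, I would then combine two consequences of the Auslander-Gorenstein hypothesis: the Auslander condition gives $d_\ell \le \ell$ for all $\ell$ (flat dimension equals projective dimension here), and characterisation (2) of Theorem \ref{AusGorcharacterisation} gives $\pdim I(j) = \grade \soc I(j) = g_j$.

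From these I would extract three facts about the sequence $(d_\ell)$ and the set $G := \{\grade S \mid S \text{ simple}\} = \{g_1,\dots,g_n\}$. Since $I(j)$ occurs in degree $g_j$ and has $\pdim I(j)=g_j$, fact (a) is that $d_\gamma=\gamma$ for every $\gamma\in G$. Since every indecomposable summand $I(j)$ of $I^\ell$ satisfies $g_j=\pdim I(j)\le d_\ell\le\ell$, fact (b) is that $d_\ell$ is the maximum of those $g_j$ and hence lies in $G$ and is bounded above by $\max\{\gamma\in G:\gamma\le\ell\}$; from this, fact (c) is that $d_i<\gamma$ whenever $i<\gamma\in G$. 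A dominant number is by definition an index at which $(d_\ell)$ attains a strict running maximum, so (a) and (c) show every $\gamma\in G$ is dominant, while (b) shows that if $\ell\notin G$ then, taking $\gamma=\max\{\gamma'\in G:\gamma'<\ell\}$, one has $d_\ell\le\gamma=d_\gamma$ with $\gamma<\ell$, so $\ell$ is not dominant. This yields the first equality: the dominant numbers are exactly $G=\{\grade S\mid S\text{ simple}\}$.

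For the second equality it suffices, since $\{\grade S\}\subseteq\{\grade X\}$ trivially, to prove $\grade X\in G$ for every $X$. Here I would fix $g:=\grade X$ and pass to the nonzero left module $N:=\Ext_A^g(X,A)$. Characterisation (3) guarantees that every submodule of $N$ has grade $\ge g$, and I would feed this into a socle induction on the length of $N$: for a simple submodule $T\subseteq N$ the long exact sequence attached to $0\to T\to N\to N/T\to 0$ forces, in degree $g$, either $\grade T=g$ or $\grade(N/T)=g$, and the hypothesis ``all submodules have grade $\ge g$'' is inherited by $N/T$; the induction therefore produces a simple module of grade $g$. Transferring this simple left module to the right side — equivalently, invoking that $\{\grade_{A^{op}}T \mid T \text{ simple}\}=\{\grade S\}=G$, which follows from Iyama's grade bijection $\phi$ together with the identity $\cograde\phi(S)=\grade S$ — then gives $g\in G$.

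The main obstacle is the single input that makes the socle induction start: that $N=\Ext_A^g(X,A)$ has grade \emph{exactly} $g$, and not merely $\ge g$. This is where the Auslander condition must be used in full strength, through the biduality spectral sequence $E_2^{p,q}=\Ext_{A^{op}}^p(\Ext_A^q(X,A),A)\Rightarrow X$, which converges because $A$ is Iwanaga-Gorenstein. The condition forces $E_2^{p,q}=0$ for $p<q$, so the corner term sitting over $\Ext_A^g$ has no incoming or outgoing differentials and must survive to contribute to the nonzero abutment $X$; hence $\Ext_{A^{op}}^g(N,A)\neq 0$, i.e. $\grade N\le g$. Verifying this vanishing-and-survival statement carefully is the technical heart of the argument, whereas everything else reduces to the bookkeeping of the first part and the elementary long-exact-sequence induction.
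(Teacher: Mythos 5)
Your route is necessarily different from the paper's, because the paper gives no self-contained argument at all: it simply cites the theorem in Section 1.1 of \cite{Iyasym} together with the fact that $\grade Y=\min\{\grade Z,\grade X\}$ for short exact sequences $0\to Z\to Y\to X\to 0$ over Auslander-Gorenstein algebras \cite[Chapter III.2.6]{LVO}. Measured against that, most of what you do is sound. Part 1 (dominant numbers $=$ grades of simples) is a correct, elementary bookkeeping argument from characterisation (2) of Theorem \ref{AusGorcharacterisation} and the Auslander condition. The socle induction is also correct: the long exact sequence indeed shows that the property ``every submodule has grade $\ge g$'' passes to the subquotients of $N$, and the step ``$\Ext^g_{A^{op}}(N,A)\neq 0$ forces grade $g$ on $T$ or on $N/T$'' is fine. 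The transfer back to right modules via Iyama's bijection and $\cograde\,\phi(S)=\grade S$ works as well (more economically, you could rerun your bidual-grade claim and the socle induction once more, on $\Ext_{A^{op}}^g(T,A)$, to land directly among the right simples without invoking $\phi$).

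The genuine flaw is in the spectral-sequence step, which you rightly flag as the technical heart. With $E_2^{p,q}=\Ext_{A^{op}}^p(\Ext_A^q(X,A),A)$ and your indexing, the differentials go $d_r\colon E_r^{p,q}\to E_r^{p+r,\,q+r-1}$. The Auslander vanishing $E_2^{p,q}=0$ for $p<q$ kills every differential \emph{into} the diagonal $p=q$, but not the ones \emph{out of} it: $d_r$ on $E_r^{g,g}$ lands in $E_r^{g+r,\,g+r-1}$, whose $E_2$-term $\Ext_{A^{op}}^{g+r}(\Ext_A^{g+r-1}(X,A),A)$ has $p>q$ and has no reason to vanish. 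Moreover, even if the corner did survive, ``the abutment $X$ is nonzero'' does not single out the corner: any of the diagonal terms $E_\infty^{p,p}$, $p\ge g$, could carry $X$. So the step fails as stated. The statement you need (that $E_2^{g,g}\neq 0$, i.e.\ $\grade \Ext_A^g(X,A)\le g$) is true, but its proof is the filtration argument: since all incoming differentials to the diagonal vanish, each $E_\infty^{p,p}$ is a \emph{submodule} of $E_2^{p,p}$; if $E_2^{g,g}=0$, then $X$ is filtered with graded pieces lying inside the $E_2^{p,p}$ with $p\ge g+1$, and every such piece has grade $\ge p\ge g+1$ by characterisation (3) of Theorem \ref{AusGorcharacterisation}; the easy half of the long-exact-sequence estimate then forces $\grade X\ge g+1$, contradicting $\grade X=g$. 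This repairs the step using only tools you already assembled, but note that the ``survival'' of the corner is a consequence of this grade argument, not a formal property of the shape of the spectral sequence.
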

\begin{proof}
This is a special case of the theorem in Section 1.1 of \cite{Iyasym}, combined with the fact that in an Auslander-Gorenstein algebra we have that $\grade Y= \min \{ \grade X, \grade Z \}$, when there is a short exact sequence $0 \rightarrow Z \rightarrow Y \rightarrow X \rightarrow 0$, see for example \cite[Chapter III.2.6]{LVO}.
\end{proof}
Thus, the dominant numbers coincide with the grades of the simple $A$-modules, and thus also with the projective dimension of the indecomposable injective $A$-modules by part (2) of Theorem \ref{AusGorcharacterisation}.
We will later see that for category $\mathcal{O}$ the dominant numbers have a deep connection to invariants appearing in geometric representation theory.

Another pleasant property of module categories of Auslander-Gorenstein algebras is the explicit knowledge of minimal approximations of the subcategories $\mathcal{X}_n:=\add (\Omega^n(\mod A) \cup \proj A)$, where $\Omega^n(\mod A)$ denotes the full subcategory of $n$-th syzygy modules.
In fact, this property even characterises algebras satisfying the Auslander condition, as was shown by Auslander and Reiten. For every $A$-module $X$ and $d \geq 0$, there exists a canonical map $\Omega^{d}(\Omega^{-d}(X)) \oplus P \rightarrow X$ where $P$ is a projective module, see \cite[Theorem 1.2]{AR} for details. This is a $\mathcal{X}_d$-approximation and we call this the \emph{canonical $\mathcal{X}_d$- approximation} of $X$.
\begin{theorem}
\cite[Section 1 and 4]{AR}
Let $A$ be an algebra. Then the following are equivalent:
\begin{enumerate}
    \item For every $A$-module $X$ and $d \geq 0$, the canonical $\mathcal{X}_d$-approximation is a minimal $\mathcal{X}_d$-approximation.
    \item $A$ satisfies the Auslander condition.
\end{enumerate}
Furthermore, in this case we have that $\mathcal{X}_d=\add (\Omega^d(\mod A))= \Omega^d(\mod A)$ is a contravariantly finite subcategory of $\mod A$.
\end{theorem}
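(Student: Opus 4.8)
The strategy is to take the canonical right $\mathcal{X}_d$-approximation $g\colon \Omega^d(\Omega^{-d}(X))\oplus P\to X$, which by \cite[Theorem 1.2]{AR} exists as a right approximation for every module $X$ and every $d\ge 0$ irrespective of any Gorenstein hypothesis, and to measure precisely its failure to be right minimal. Since we work over a finite-dimensional algebra, the category $\mod A$ is Krull--Schmidt, so $g$ is minimal exactly when its source has no nonzero direct summand annihilated by $g$. Thus the first step is to exhibit the obstruction to minimality as a concrete summand of $\Omega^d(\Omega^{-d}(X))\oplus P$ lying in $\ker g$, and to identify this summand homologically.

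To do so I would compare the $d$-th syzygy of the $d$-th cosyzygy of $X$ with $X$ itself by means of Auslander's transpose duality. Writing $M^{\ast}=\Hom_A(M,A)$, one builds $\Omega^{-d}(X)$ from the minimal injective coresolution of $X$ and $\Omega^d(\Omega^{-d}(X))$ from a minimal projective resolution, and the comparison map $g$ then fits into exact sequences whose kernel and cokernel are expressed through the iterated functors $\Ext^i_A(\Ext^i_{A^{\mathrm{op}}}(-,A),A)$. The upshot I am aiming for is that a redundant summand in the source of $g$ occurs precisely when one of the modules $\Ext^i_A(N,A)$ has grade strictly less than $i$; in other words, minimality of $g$ for the given $X$ is equivalent to a family of grade inequalities for the cohomology of the modules attached to $X$.

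With this dictionary in hand, the equivalence follows by invoking Theorem \ref{AusGorcharacterisation}. For (2)$\Rightarrow$(1) one assumes the Auslander condition, which by Auslander's grade characterisation (the submodule form recorded as condition (3), and the double-$\Ext$ form as condition (5), of Theorem \ref{AusGorcharacterisation}) says exactly that every submodule of an $\Ext^i_{A^{\mathrm{op}}}(C,A)$ has grade $\ge i$; feeding this into the computation of the previous step makes every obstruction vanish, so $g$ is minimal for all $X$ and $d$. For (1)$\Rightarrow$(2) one tests minimality on the modules naturally attached to the simple modules (their syzygies and cosyzygies), which reads off precisely the grade inequalities $\grade\Ext^i_A(M,A)\ge i$ and hence the Auslander condition.

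For the final assertion, contravariant finiteness of $\mathcal{X}_d$ is immediate, since the canonical approximation always exists. Under the Auslander condition the equalities $\mathcal{X}_d=\add(\Omega^d(\mod A))=\Omega^d(\mod A)$ reduce to showing that $\Omega^d(\mod A)$ is closed under direct summands and already contains the projectives: closure under summands I would deduce from the minimality established above, since a summand of a $d$-th syzygy is then realised as the source of a canonical approximation and hence is itself a $d$-th syzygy, while projectives are $d$-th syzygies by convention, absorbing the $\proj A$ term. The main obstacle is the homological heart in the second paragraph: correctly identifying the kernel and cokernel of $g$ with the iterated $\Ext$-modules, keeping careful track of projective summands and of the passage between syzygies and cosyzygies, and matching the defect of minimality to the grade condition.
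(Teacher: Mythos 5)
The survey itself offers no proof of this theorem — it simply cites \cite[Sections 1 and 4]{AR} — so the comparison must be with that source, and your outline does follow its strategy: existence of the canonical approximation from \cite[Theorem 1.2]{AR}, the Krull--Schmidt reduction of right minimality to the absence of a redundant summand in the source, translation of that defect into grade inequalities via the transpose/double-$\Ext$ functors, and finally the grade characterisation of the Auslander condition. Your treatment of the last assertion is correct as it stands: contravariant finiteness needs only the existence of the approximations, and if $Y$ is a direct summand of a $d$-th syzygy then $Y\in\mathcal{X}_d$, so the canonical approximation of $Y$ is a split epimorphism, and a split epimorphism which is right minimal is an isomorphism, exhibiting $Y$ itself as a $d$-th syzygy. (Note that this, and indeed the statement of the theorem, requires reading $\Omega^d(\mod A)$ as $d$-th syzygies of arbitrary, not necessarily minimal, resolutions: with the minimal convention the equality $\mathcal{X}_d=\Omega^d(\mod A)$ already fails for $A=k[x]/(x^2)$, which is self-injective and hence satisfies the Auslander condition, while its regular module is not a minimal syzygy. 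With this convention your remark that projectives are syzygies ``by convention'' is fine.)

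There remain, however, two genuine gaps. First, the core of the theorem --- the precise identification of the redundant summands (equivalently, the kernel/cokernel data) of the canonical map $\Omega^{d}(\Omega^{-d}(X))\oplus P\to X$ with failures of the grade inequalities $\grade \Ext^i_{A^{\mathrm{op}}}(C,A)\geq i$ --- is only announced (``the upshot I am aiming for''), and you yourself flag it as the main obstacle. This identification is exactly where all the work in \cite{AR} lies, and both implications of your argument, including the direction (1)$\Rightarrow$(2) via ``test modules attached to the simples'', rest entirely on it; so as written the proposal is a correct plan rather than a proof. Second, you invoke Theorem \ref{AusGorcharacterisation}, but its hypotheses are not met: that theorem is stated for Iwanaga-Gorenstein algebras, whereas here $A$ is an arbitrary algebra and the Auslander condition is not known to imply $\idim A_A<\infty$ --- that implication is precisely the open Conjecture \ref{Auslander-Reiten Conjecture}. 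What you need instead is the level-wise statement that $A$ is $k$-Gorenstein if and only if the grade inequalities hold up to level $k$, valid with no injective-dimension hypothesis (this is \cite[Theorem 3.7]{FGR} and Section 3 of \cite{AR}); with that substitution the logic is repairable, but as written this step appeals to a theorem that does not apply.
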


We refer to \cite{AR} for further properties and characterisations of homological properties using subcategories associated with syzygies.
We note that the study of the subcategories $\mathcal{X}_d$ is a central topic in representation theory. For example when $A$ is Iwanaga-Gorenstein with $\idim A_A=d$, then $\mathcal{X}_d$ coincides with the subcategory of maximal Cohen-Macaulay modules (also known as Gorenstein projective modules), which is the central topic of study for Gorenstein homological algebra.
We refer for example to the survey articles \cite{ChenGor} and \cite{GLZ}.

\subsection{Generalisation of Auslander-Gorenstein algebras}
We briefly mention some generalisations of Auslander-Gorenstein algebras for finite dimensional algebras in the literature.
One generalisation comes from relaxing the condition $\pdim I^i \leq i$ in the defintion of the Auslander condition by allowing a condition of the form $\pdim I^i \leq a(i)$ for some function $a : \mathbb{N} \rightarrow \mathbb{N}$.
The most prominent generalisation is the one with $a(i)=i+1$, which leads to quasi-$n$-Gorenstein algebras, we refer to \cite{HI} and \cite{IyamaZhang} for more information.
Another recent generalisation comes from relaxing the Iwanaga-Gorenstein condition and allowing the more general Cohen-Macaulay property for Artin algebras in the sense of Auslander and Reiten \cite{ARCohen}. In \cite{CIMminimalCM} we introduce algebras satisfying the Auslander condition that are additionally Cohen-Macaulay and minimal Auslander-Cohen-Macaulay algebras and in \cite{CIMcontracted} we found many examples of such algebras inside the class of contracted preprojective algebras of Dynkin type.
\section{Blocks of category $\mathcal{O}$}

In this section we assume that the field $K= \mathbb{C}$ is the field of complex numbers and $\mathfrak{g}$ denotes a semi-simple Lie algebra over $K$ with a Cartan decomposition $\mathfrak{g}=\mathfrak{n}^{-} \oplus \mathfrak{h} \oplus \mathfrak{n}$, we refer for example to \cite[Section 0]{Hu} for details.
Recall that the universal enveloping algebra $U(\mathfrak{g})$ of $\mathfrak{g}$ is defined as the quotient of the tensor algebra $T(\mathfrak{g})$ by the two-sided ideal generated by elements of the form $X \otimes Y - Y \otimes X - [X,Y]$ for $X,Y \in \mathfrak{g}$.
The \emph{Bernstein-Gelfand-Gelfand category $\mathcal{O}$} associated 
to $\mathfrak{g}$ is defined as the full subcategory of $\mod U(\mathfrak{g})$ consisting of the modules $M$ with the following properties:
\begin{enumerate}
    \item $M$ is a weight module.
    \item The subspace $U(\mathfrak{n}) v$ of $M$ is finite-dimensional for each $v \in M.$
\end{enumerate}
Category $\mathcal{O}$ has many nice properties, such as being an abelian category. We refer to \cite{Hu} for more details and properties.
The blocks of category $\mathcal{O}$ are finite-dimensional algebras of finite global dimension.
It was recently proved in \cite{KMM} that these blocks are also Auslander regular:
\begin{theorem}
\cite[Theorem A]{KMM}
Let $A$ be a block of category $\mathcal{O}$ associated to the semi-simple Lie algebra $\mathfrak{g}$. Then $A$ is Auslander regular.
\end{theorem}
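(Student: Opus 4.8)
The plan is to use that blocks of category $\mathcal{O}$ have finite global dimension in order to reduce Auslander regularity to the Auslander--Gorenstein property, and then to verify the latter by combining the graded (Koszul) structure of $A$ with the intrinsic duality of $\mathcal{O}$. Since $A$ has finite global dimension it is in particular Iwanaga--Gorenstein, so by Theorem \ref{AusGorcharacterisation} it suffices to check condition (2): $\pdim I = \grade \soc I$ for every indecomposable injective $A$-module $I$. The first simplification I would make uses the contravariant duality $(-)^\vee$ on category $\mathcal{O}$, which fixes the simple modules and interchanges projectives and injectives; this gives $\soc I(\lambda) = (\top P(\lambda))^\vee = S(\lambda)$, so that condition (2) becomes the numerical identity $\pdim I(\lambda) = \grade S(\lambda)$ for every simple $S(\lambda)$.

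Next I would bring in the two structural theorems that make blocks of $\mathcal{O}$ tractable. First, by Beilinson--Ginzburg--Soergel the block $A$ carries a Koszul grading; I would fix this grading and work with graded modules, where the homological degree of an $\Ext$-group between simples coincides with its internal degree. Second, the class of blocks of $\mathcal{O}$ is closed under Koszul duality: the Koszul dual of a regular block is a singular block and vice versa. This self-duality of the class is exactly the feature matched to the ``self-dual'' nature of the Auslander--Gorenstein condition, and I would exploit it to transport $\Ext$-computations between a block and its dual.

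With these tools in place, the core of the argument is to match the two sides of $\pdim I(\lambda) = \grade S(\lambda)$. Using duality, the left side equals $\idim P(\lambda)$, the injective dimension of the projective corresponding to $\lambda$, while the right side is the first homological degree in which $S(\lambda)$ admits a non-zero $\Ext$ into the regular module. Koszul duality exchanges homological and internal degree and sends the block to one of the same type, so I would translate both invariants into statements about the graded composition factors of projectives in the dual block. The cleanest route is then to verify condition (4) of Theorem \ref{AusGorcharacterisation} directly: for every simple $S$ and every $i$, each composition factor of $\Ext^i_A(S,A)$ has grade at least $i$. In the Koszul setting this reduces to a purity statement, namely that $\Ext^i_A(S,A)$ is concentrated in internal degree $i$, whence its composition factors cannot appear before homological degree $i$ in the dual block.

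I expect the main obstacle to be precisely this grade computation. The equality $\pdim I(\lambda) = \grade S(\lambda)$ is not formal: it encodes the alignment between the length of the minimal projective resolution of an injective and the first non-vanishing $\Ext$ into $A$, and in category $\mathcal{O}$ both of these are expressed through Kazhdan--Lusztig polynomials and the geometry of Schubert varieties, which is the source of the ``deep connection to invariants in geometric representation theory'' mentioned above. The technical heart of the proof is therefore to show that the Koszul grading forces the required purity of the groups $\Ext^i_A(S,A)$, so that the grade inequality of condition (4) holds for all simples simultaneously; once this is established, Theorem \ref{AusGorcharacterisation} yields the Auslander--Gorenstein property and, together with finite global dimension, Auslander regularity.
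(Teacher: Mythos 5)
You should first note that the survey contains no proof of this statement: it is quoted from \cite{KMM}, and the toolkit you propose (the Beilinson--Ginzburg--Soergel Koszul grading, the simple preserving duality giving $\soc I(\lambda)=S(\lambda)$, reduction to a grade condition as in Theorem \ref{AusGorcharacterisation}) is indeed the same circle of ideas the cited proof lives in, together with Kazhdan--Lusztig combinatorics (compare Mazorchuk's formula $\pdim I(w)=2a(w_0w)$ quoted right after the theorem). Nevertheless your proposal has genuine gaps, and its central technical claim is false as stated. The purity statement ``$\Ext_A^i(S,A)$ is concentrated in internal degree $i$'' fails already at $i=0$ for every non-semisimple block: $\Hom_A(S,A)$ is computed from the occurrences of $S$ in $\soc A$, and these sit in strictly positive internal degrees. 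Concretely, for the regular block of $\mathcal{O}$ for $\mathfrak{sl}_2$ (the algebra $KQ/\langle ab\rangle$ displayed in this section), the antidominant simple occurs in the socles of the two indecomposable projectives in internal degrees $1$ and $2$, so $\Hom_A(S,A)$ is two-dimensional and spread over two degrees. Even after the obvious repair (restricting purity to $i\geq 1$), purity is not a formal consequence of Koszulity plus finite global dimension: for the Kronecker algebra, which is Koszul and hereditary, $\Ext_A^1(S(1),A)$ is not pure, and that algebra is in fact not Auslander regular (its dominant dimension is $0$). So the purity you need carries essentially the full weight of the theorem, and your proposal offers no argument for it beyond invoking Kazhdan--Lusztig theory. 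Moreover, even granting purity, the step ``purity implies condition (4)'' is asserted, not proved: the grade of a composition factor is defined by $\Ext$-groups into $A$, not by its internal degree, and converting one into the other requires an actual argument (for instance a graded version of the double-$\Ext$ spectral sequence $\Ext^j_{A^{op}}(\Ext^i_A(S,A),A)\Rightarrow S$, where purity forces off-diagonal terms to survive unless they vanish); this bridge is missing.

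There is also a structural error in your duality bookkeeping: the class of blocks of $\mathcal{O}$ is \emph{not} closed under Koszul duality. By Beilinson--Ginzburg--Soergel, a regular integral block is Koszul \emph{self}-dual, while a singular block is Koszul dual to a regular block of a \emph{parabolic} category $\mathcal{O}^{\mathfrak{p}}$, which is not a block of $\mathcal{O}$. (Non-integral blocks must additionally be reduced to integral ones via Soergel's equivalences, which you do not mention.) Since the theorem covers all blocks, including singular ones, your transport argument cannot reach them without simultaneously proving Auslander regularity for parabolic category $\mathcal{O}$; enlarging the class in this way is exactly where the real difficulty concentrates, so this is not a cosmetic fix but a missing piece of the argument.
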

This answered a question by Marczinzik, who observed the Auslander regular property for such algebras in small examples where quiver and relations are known by the work of Stroppel \cite{Str}.
In the simplest non-trivial example, the regular block of category $\mathcal{O}$ for $\mathfrak{g}=\operatorname{sl}_2$ is simply the algebra $KQ/I$ with quiver $Q$:
\[\begin{tikzcd}
	1 & 2
	\arrow["a", shift left=3, from=1-1, to=1-2]
	\arrow["b", shift left=3, from=1-2, to=1-1]
\end{tikzcd}\]
and relations $I=\langle ab \rangle$.

We now explain two applications of the Auslander regular property for blocks of category $\mathcal{O}$.
The indecomposable simple modules $S(w)$ in the regular block of category $\mathcal{O}$ are parametrised by the elements $w \in W$ of the corresponding Weyl group $W$ of $\mathfrak{g}$. We denote the longest element of the Weyl group by $w_0$.
Lusztig's a-function is a function $a:W \rightarrow \mathbb{N}$ with several nice properties. We omit its definition as it is quite complicated and refer to \cite{Lu1} and \cite{Lu2}.
A homological interpretation of Lusztig's a-function was found in \cite{Maz}: 
\begin{theorem}
\cite[Theorem 20]{Maz}
Let $\mathfrak{g}$ be a semi-simple Lie algebra with Weyl group $W$ and let $A$ denote the regular block in category $\mathcal{O}$.
Then the projective dimension of the indecomposable injective $A$-module $I(w)$, the injective envelope of $S(w)$, is given by $2 a (w_0 w)$.
\end{theorem}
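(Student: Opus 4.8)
The plan is to reduce the computation of $\pdim I(w)$ to a grade computation using the Auslander regularity just established, and then to identify this grade with Lusztig's a-function via the Koszul and Kazhdan--Lusztig combinatorics of the block. Since $A$ is Auslander regular it is in particular Auslander--Gorenstein, so part (2) of Theorem \ref{AusGorcharacterisation} applies: for every indecomposable injective module $I$ one has $\pdim I = \grade \soc I$. By hypothesis $I(w)$ is the injective envelope of $S(w)$, so $\soc I(w) = S(w)$, and therefore
$$\pdim I(w) = \grade \soc I(w) = \grade S(w).$$
It thus suffices to prove $\grade S(w) = 2a(w_0 w)$, i.e. that the least $i$ with $\Ext_A^i(S(w),A) \neq 0$ equals $2a(w_0 w)$. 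This streamlined route is available here precisely because Auslander regularity of the block is already in hand.

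First I would unfold the grade. Writing $A = \bigoplus_x P(x)$ we have $\Ext_A^i(S(w),A) = \bigoplus_x \Ext_A^i(S(w),P(x))$, so $\grade S(w)$ is the smallest cohomological degree in which $S(w)$ has a nontrivial extension by an indecomposable projective. Here I would pass to the graded lift of the block, which is Koszul by Beilinson--Ginzburg--Soergel; the minimal projective resolution of $S(w)$ is then linear, and the relevant extension groups are computed by purely combinatorial data. Since every projective $P(x)$ carries a standard (Verma) flag, the long exact sequences attached to that flag reduce the question to the graded extension groups between the simple $S(w)$ and the standard (equivalently costandard) modules, whose graded dimensions are given by Kazhdan--Lusztig polynomials through Soergel's combinatorial description of the block. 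In this way $\grade S(w)$ is expressed as the minimal degree occurring in an explicit Kazhdan--Lusztig-theoretic formula.

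The main obstacle is the final identification of this minimal degree with $2a(w_0 w)$: this is exactly the homological incarnation of Lusztig's a-function and is the heart of the matter. I would establish it through Soergel's endomorphism theorem together with the defining extremality property of the a-function in the Hecke algebra (the a-function records the top degree of the structure constants of the Kazhdan--Lusztig basis), the factor $2$ reflecting the even concentration of $\Ext$-degrees forced by Koszulity, and the $w_0$-twist being the shift characteristic of Koszul duality for category $\mathcal{O}$. As a consistency check, for $\mathfrak{g} = \operatorname{sl}_2$ one computes directly that $\pdim I(e) = 2 = 2a(s) = 2a(w_0 e)$ and $\pdim I(w_0) = 0 = 2a(e) = 2a(w_0 w_0)$, in agreement with the asserted formula.
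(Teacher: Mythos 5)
Your opening reduction is fine as far as it goes: granting that the regular block is Auslander regular, Theorem \ref{AusGorcharacterisation}(2) gives $\pdim I(w)=\grade \soc I(w)=\grade S(w)$, and your $\mathfrak{sl}_2$ check is correct. But the route has a serious logical problem: it is circular, or at best of unverified consistency. The Auslander regularity you take as your starting point is the 2021 theorem of \cite{KMM}, and that paper's verification of the Auslander condition $\pdim I^i\leq i$ builds on Mazorchuk's earlier homological computations in the same series of papers --- including precisely the formula $\pdim I(w)=2a(w_0w)$ that you are trying to prove; knowing the projective dimensions of the indecomposable injectives is an essential ingredient there. Note also that this survey runs the logic in the opposite direction: Mazorchuk's theorem is an independent input from 2010, which is then \emph{combined with} Auslander regularity to obtain the statement about grades (\cite[Proposition 3.8]{KMT}), not deduced from it. The survey itself gives no proof of the present theorem --- it is quoted from \cite{Maz} --- and Mazorchuk's actual argument does not pass through Auslander--Gorenstein theory at all (which was not available for category $\mathcal{O}$ at the time); it computes the projective dimensions of injectives directly in graded category $\mathcal{O}$, relating them to tilting modules via twisting and Koszul duality functors and then to Lusztig's $a$-function via cell theory.

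Second, even accepting the reduction, the mathematical content, $\grade S(w)=2a(w_0w)$, is never established. Your final paragraph names tools (Soergel's endomorphism theorem, the extremality property of the $a$-function, Koszulity forcing even $\Ext$-degrees, a $w_0$-twist attributed to Koszul duality) but contains no argument, and you concede this is ``the heart of the matter.'' The intermediate step is also weaker than you suggest: the groups $\Ext^i_A(S(w),\Delta(y))$, and hence $\Ext^i_A(S(w),P(x))$, are not directly Kazhdan--Lusztig polynomial data --- the KL-combinatorial groups in a Koszul highest weight category are $\Ext(\Delta(x),S(y))$ and $\Ext(S(x),\nabla(y))$ --- and the long exact sequences coming from a Verma flag of $P(x)$ only give vanishing bounds on the minimal non-vanishing degree; pinning down the exact value requires an argument excluding cancellation, which is absent. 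So what remains after your (conditionally valid) first step is exactly the theorem itself, unproved.
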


Combining this result with the Auslander regular property gives the following global information on the grades of modules in the module category of the regular block:

\begin{proposition}
\cite[Proposition 3.8]{KMT}
Let $A$ be the regular block of category $\mathcal{O}$ associated to a semi-simple lie algebra $\mathfrak{g}$.
The set of grades of the $A$-modules coincides with the set of values of two times Lusztig's a-function.
\end{proposition}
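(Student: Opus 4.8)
The plan is to combine the two results stated immediately above in this section. The final Proposition asserts that the set of grades of the $A$-modules equals the set of values $\{2a(w) \mid w \in W\}$, where $A$ is the regular block of category $\mathcal{O}$. My strategy rests on three ingredients already available in the excerpt: first, that $A$ is Auslander regular (hence Auslander-Gorenstein), by the theorem of \cite{KMM}; second, the Theorem immediately preceding, which says that the projective dimension of the indecomposable injective $I(w)$ equals $2a(w_0 w)$; and third, the Theorem on dominant numbers, which identifies the set of grades $\{\grade X \mid X \in \mod A\}$ with the set of projective dimensions of the indecomposable injective $A$-modules (via part (2) of Theorem \ref{AusGorcharacterisation}).

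First I would record that, since $A$ is Auslander regular, it is in particular Auslander-Gorenstein, so every result proved earlier for Auslander-Gorenstein algebras applies. Then, invoking the Theorem on dominant numbers together with part (2) of Theorem \ref{AusGorcharacterisation}, the set $\{\grade X \mid X \in \mod A\}$ coincides with the set $\{\pdim I \mid I \text{ indecomposable injective}\}$. The indecomposable injectives are exactly the $I(w)$ for $w \in W$, since the simple modules in the regular block are parametrised by $W$ and $I(w)$ is the injective envelope of $S(w)$. So the set of grades equals $\{\pdim I(w) \mid w \in W\}$.

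Next I would substitute the formula $\pdim I(w) = 2a(w_0 w)$ from Mazorchuk's theorem, giving that the set of grades equals $\{2a(w_0 w) \mid w \in W\}$. The final step is to observe that left multiplication by the longest element $w_0$ is a bijection of $W$ onto itself (as $w_0$ is a group element), so as $w$ ranges over $W$ the product $w_0 w$ also ranges over all of $W$. Hence $\{2a(w_0 w) \mid w \in W\} = \{2a(w) \mid w \in W\}$, which is precisely two times the set of values of Lusztig's a-function, completing the argument.

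I do not expect a serious obstacle here, as the proof is essentially a matter of chaining together the cited results and applying the reindexing $w \mapsto w_0 w$; the only point requiring a moment's care is the identification of the indecomposable injectives with the $I(w)$ and the confirmation that the dominant-numbers theorem genuinely yields the full set of grades (not merely the grades of simples), which follows because that theorem explicitly equates $\{\grade X \mid X \in \mod A\}$ with $\{\grade S \mid S \text{ simple}\}$ and with the set of dominant numbers. Thus the bulk of the work is bookkeeping rather than any new homological input.
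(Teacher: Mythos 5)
Your proof is correct and follows essentially the same route the paper intends: combining the Auslander regularity of the regular block with the dominant-numbers theorem (via part (2) of Theorem \ref{AusGorcharacterisation}) to identify the set of grades with the set of projective dimensions of the indecomposable injectives $I(w)$, then substituting Mazorchuk's formula $\pdim I(w) = 2a(w_0 w)$ and reindexing by the bijection $w \mapsto w_0 w$. The bookkeeping, including the observation that the grades of all modules reduce to the grades of simples, matches the paper's argument exactly.
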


Recall that a finite-dimensional algebra $A$ has a \textit{simple preserving duality} if there is a contravariant anti-equivalence $H$ of $\mod A$ which preserves the isomorphism classes of simple $A$-modules.
All blocks of category $\mathcal{O}$ have a simple preserving duality.
The next proposition determines the Auslander-Reiten bijection for a class of algebras that contains blocks of category $\mathcal{O}$:
\begin{proposition}
\cite[Proposition 3.16]{KMT}
Let $A$ be an Auslander regular algebra with a simple preserving duality. Then the Auslander-Reiten permutation is the identity.
\end{proposition}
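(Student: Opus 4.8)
The plan is to show that the Auslander--Reiten permutation $\hat\psi$ is the identity by exploiting the simple preserving duality together with Theorem \ref{ARpermequalgradeperm}, which tells us that $\hat\psi$ coincides with the grade permutation $\hat\phi$. Thus it suffices to prove that $\hat\phi = \id$, i.e.\ that for every simple module $S$ we have $\phi(S) \cong S$, where $\phi(S) = \top(\D\Ext_A^{g_S}(S,A))$ and $g_S = \grade S$.

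First I would unwind how the duality $H$ interacts with the functors defining $\phi$. Since $H$ is a contravariant anti-equivalence of $\mod A$ preserving simples, it exchanges projective and injective modules and converts minimal projective resolutions into minimal injective coresolutions. Concretely, $H$ should intertwine the contravariant functor $\Hom_A(-,A)$ (whose derived functors give $\Ext_A^i(-,A)$) with the functor computing injective coresolutions, so that $H$ carries $\Ext_A^{i}(S,A)$ to something expressible via $\Ext_A^{i}(\D A, HS)$ or an analogous cograde computation. The key numerical input is the property recorded just after Iyama's theorem: the cograde of $\phi(S)$ equals the grade of $S$. Because $H$ preserves simples and sends $A$ to $\D A$ (up to the standard identifications), it matches the grade of $S$ with the cograde of $HS \cong S$; combined with the grade--cograde duality built into $\phi$, this forces $\phi(S)$ and $S$ to share the relevant homological invariants.

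The cleanest route I expect is to show directly that $H$ conjugates $\phi$ into its own inverse, or that $H\circ\phi\circ H$ equals $\phi^{-1}$, while simultaneously $H$ fixes each simple; reconciling these two facts pins down $\phi$ as an involution that is forced to be trivial by the grade/cograde matching. Alternatively, one can argue at the level of the Auslander--Reiten bijection $\psi\colon I(i)\mapsto \Omega^d(I(i))=P(j)$ with $d=\pdim I(i)$: applying $H$ sends the minimal projective resolution of $I(i)$ to the minimal injective coresolution of $P(i)=H(I(i))$, so the last projective term $P(j)$ of the resolution corresponds under $H$ to the top injective term of the coresolution of $P(i)$, whose injective dimension is $d$. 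Since $H$ preserves simples, the vertex $j$ read off from the projective side must equal the vertex read off from the injective side for the \emph{same} simple, yielding $\hat\psi(i)=i$.

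The main obstacle will be making the interaction between $H$ and the contravariant $\Hom_A(-,A)$ functor fully precise, since $H$ and $\Hom_A(-,A)$ are both contravariant and one must track carefully how their composite acts on minimal resolutions and on the bimodule structure of $A$ (in particular, identifying $H(A)$ with $\D A$ and verifying that $H$ sends $\Ext_A^i(-,A)$ to the corresponding $\Ext$ over $A^{op}$ or to the cograde functor). Once this compatibility is established, the equality $\phi = \id$—and hence, via Theorem \ref{ARpermequalgradeperm}, that the Auslander--Reiten permutation is the identity—follows formally from the grade equals cograde-of-image property together with the simple-preservation of $H$.
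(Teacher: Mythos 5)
Your proposal has a genuine gap: both of the routes you sketch only prove that the Auslander--Reiten permutation is an \emph{involution}, and the final steps asserting it is the identity are non-sequiturs. Concretely, in your second route: the minimal projective resolution of $I(i)$ ends in $P(j)$ with $j=\hat{\psi}(i)$; applying $H$ gives a minimal injective coresolution of $P(i)=H(I(i))$ ending in $I(j)$. By the description of $\psi^{-1}$ (last cosyzygy of $P(i)$), this says exactly $\psi^{-1}(P(i))=I(j)$, i.e.\ $\hat{\psi}(j)=i$. Combined with $\hat{\psi}(i)=j$ you get $\hat{\psi}^2=\id$, and nothing forces $j=i$; the sentence ``the vertex $j$ read off from the projective side must equal the vertex read off from the injective side for the same simple'' is precisely the unjustified jump. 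Your first route fares the same: the duality gives $\grade S=\cograde S$ for every simple $S$, and together with Iyama's property ($\cograde\phi(S)=\grade S$) this only says that the grade bijection is a grade-preserving involution, which does not pin it down to the identity.

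The gap is not repairable by sharpening these arguments, because nowhere do you use finite global dimension, and at the Auslander--Gorenstein level of generality your intermediate conclusions all hold while the statement fails. Take $A=KQ/I$ where $Q$ has vertices $1,2$ and arrows $a:1\to 2$, $b:2\to 1$, with $I=\langle ab,ba\rangle$ (the preprojective algebra of type $\DynA_2$). This algebra is selfinjective, hence Auslander--Gorenstein, and it has a simple preserving duality ($D$ composed with the anti-automorphism fixing the vertices and swapping $a$ and $b$); moreover $\grade S=\cograde S=0$ for both simples and all your resolution-flipping identities hold trivially. Yet $I(1)=P(2)$ and $I(2)=P(1)$, so $\hat{\psi}$ is the transposition $(1\,2)$. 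So any correct proof must use Auslander \emph{regularity}, and this is exactly how the proof in \cite[Proposition 3.16]{KMT} goes: a simple preserving duality forces the Cartan matrix to be symmetric, since
$\Hom_A(P(i),P(j))\cong\Hom_A(I(j),I(i))\cong\Hom_{A^{op}}(DI(i),DI(j))$ gives $\dim e_jAe_i=\dim e_iAe_j$; by finite global dimension the Cartan matrix $\phi_A$ is invertible over $\Z$, so the Coxeter matrix is $C_A=-\phi_A^T\phi_A^{-1}=-\mathrm{Id}$, whose Bruhat permutation is trivial, and then the theorem that the Coxeter permutation coincides with the Auslander--Reiten permutation for an admissible ordering (\cite[Theorem 3.13]{KMT}, quoted in this survey) concludes. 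Note that in the counterexample above the Cartan matrix is symmetric but singular, which is exactly where this argument --- and the proposition --- breaks down without regularity.
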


In particular, this tells us that the last term in a minimal projective resolution of an indecomposable injective module $I(w)$ is always given by $P(w)$ in blocks of category $\mathcal{O}$ for $w \in W.$
\section{Higher Auslander algebras, dominant Auslander regular algebras and cluster tilting}

An \textit{Auslander algebra}
is an algebra $A$ such that $\gldim A \leq 2 \leq \domdim A$. Note that this implies $\gldim A= \domdim A$ if $A$ is connected and non-semisimple.
When $A$ is an algebra of dominant dimension $\geq 1$, then we define the \textit{base algebra} of $A$ as the endomorphism algebra $\End_A(N)$, where $N$ is the direct sum of all indecomposable left projective-injective $A$-modules, called the left \textit{minimal faithful projective-injective $A$-module}. Note that a left minimal faithful projective-injective $A$-module can be written in the form $N=Af$ for an idempotent $f$ and then $N=Af$ is a right $fAf$-module as $\End_A(Af)=fAf$.
The following is the celebrated Auslander correspondence \cite{AusQu} that was a starting point for many modern developments in representation theory:
\begin{theorem}
There is a bijective correspondence up to Morita equivalence between the classes of representation-finite algebras $B$ and Auslander algebras $A$.
It is given by associating to $B$ the endomorphism algebra $A=\End_B(M)$ where $M$ is the direct sum of all indecomposable $B$-modules and associating to $A$ the base algebra $B$ of $A$.
\end{theorem}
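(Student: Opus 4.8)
The plan is to deduce the correspondence from the Morita--Tachikawa correspondence together with a single global dimension computation. Recall that Morita--Tachikawa identifies the algebras $A$ with $\domdim A \geq 2$ with pairs $(B,M)$, where $M$ is a generator-cogenerator of $\mod B$, via $A = \End_B(M)$; moreover the double centraliser property gives $B \cong \End_A(N) = fAf$ for $N = Af$ the minimal faithful projective-injective $A$-module, and $N \cong M$ as a $B$-module. Under this dictionary, being an Auslander algebra is precisely the extra requirement $\gldim A \leq 2$, so the whole theorem reduces to the claim that, for a generator-cogenerator $M$ of $\mod B$ with $A = \End_B(M)$, one has $\gldim A \leq 2$ if and only if $\add M = \mod B$, i.e. $B$ is representation-finite and $M$ is an additive generator. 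I would prove both implications using the left exact functor $F = \Hom_B(M,-)$, which restricts to an equivalence $\add M \xrightarrow{\sim} \proj A$ and, since $M$ is a generator, is fully faithful on all of $\mod B$.

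For the direction starting from a representation-finite $B$, set $M = \bigoplus_i M_i$, the direct sum over all indecomposables, and $A = \End_B(M)$, so that $\add M = \mod B$. To see $\gldim A \leq 2$, take an arbitrary $A$-module $X$ with minimal projective presentation $P_1 \xrightarrow{f} P_0 \to X \to 0$. Writing $P_0 = F(N_0)$, $P_1 = F(N_1)$ and $f = F(g)$ for a map $g \colon N_1 \to N_0$ via full faithfulness, left exactness of $F$ gives $\ker f = F(\ker g) = \Hom_B(M, \ker g)$; as $\ker g \in \mod B = \add M$, this kernel is projective, so $\pdim X \leq 2$. For $\domdim A \geq 2$, apply $F$ to a minimal injective copresentation $0 \to M_i \to I^0 \to I^1$ of each indecomposable in $\mod B$; since $M$ is a cogenerator the injectives $I^0, I^1$ lie in $\add M$, so $F(I^0), F(I^1)$ are projective, and since $M$ is a generator they are also injective $A$-modules. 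Left exactness of $F$ then exhibits $0 \to F(M_i) \to F(I^0) \to F(I^1)$ with projective-injective middle terms, giving $\domdim F(M_i) \geq 2$ and hence $\domdim A \geq 2$.

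Conversely, assume $A$ is Auslander, so $\gldim A \leq 2$ and, by Morita--Tachikawa, $A = \End_B(M)$ with $M$ a generator-cogenerator. For an indecomposable $Y \in \mod B$, use that $M$ is a cogenerator to form an exact sequence $0 \to Y \to M^0 \to M^1$ with $M^0, M^1 \in \add M$; applying $F$ and letting $W = \operatorname{coker}(F(M^0) \to F(M^1))$ yields an exact sequence $0 \to F(Y) \to F(M^0) \to F(M^1) \to W \to 0$ of $A$-modules with $F(M^0), F(M^1)$ projective. Since $\gldim A \leq 2$ forces $\pdim W \leq 2$, the kernel $F(Y)$ is a second syzygy of $W$ up to projective summands and is therefore projective; thus $F(Y) \cong F(M')$ for some $M' \in \add M$, and full faithfulness of $F$ gives $Y \cong M' \in \add M$. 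Hence every indecomposable $B$-module is a summand of $M$, so $B$ is representation-finite and $M$ is its additive generator. The two assignments are mutually inverse up to Morita equivalence: the double centraliser property recovers $B$ from $A$, while uniqueness of the additive generator recovers $A = \End_B(M)$ from $B$.

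The main obstacle is the equivalence $\gldim A \leq 2 \Leftrightarrow \add M = \mod B$, and specifically the structural bookkeeping needed to run the syzygy argument honestly: one must know that $F$ carries injective $B$-modules lying in $\add M$ to projective-injective $A$-modules, that $F$ is fully faithful and reflects isomorphisms for a generator $M$, and that a kernel of a map between projectives computing a second syzygy of a module of projective dimension $\leq 2$ is itself projective. These are exactly the structural inputs underlying the Morita--Tachikawa theory, so the real content lies in assembling them; once they are in place, the projective-dimension estimates are the short left-exactness computations above.
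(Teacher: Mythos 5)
Your proposal is correct. Note that the survey gives no internal proof of this statement: it records the Auslander correspondence as a classical result and cites Auslander's Queen Mary College notes \cite{AusQu} (and, for the higher analogue, points to \cite{Iyamarevisited} for an elementary proof), so there is no argument in the paper to compare against line by line. Your route --- reducing via Morita--Tachikawa to the single equivalence $\gldim \End_B(M) \leq 2 \Leftrightarrow \add M = \mod B$ for a generator-cogenerator $M$, and proving both implications with the left exact functor $F=\Hom_B(M,-)$ --- is the standard modern proof, essentially the $n=1$ case of the argument for Iyama's higher Auslander correspondence. Both directions are carried out correctly: left exactness gives $\ker F(g)=F(\ker g)$, so when $\mod B=\add M$ every second syzygy over $A$ lands in $F(\add M)=\proj A$; conversely, $\gldim A\le 2$ forces $F(Y)$, which is a second syzygy of the cokernel $W$ up to projective summands, to be projective, and full faithfulness of $F$ then places $Y$ in $\add M$. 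The structural facts you invoke without proof are genuinely classical and quotable: the equivalence $\add M\simeq\proj A$ and full faithfulness of $F$ for a generator $M$ are the projectivization results in \cite{ARS}, and the fact that $F$ carries injective $B$-modules to injective $A$-modules holds because ${}_AM$ is a projective left $A$-module whenever $M$ is a generator, making $-\otimes_A M$ exact. One small point worth making explicit: the copresentation $0\to F(M_i)\to F(I^0)\to F(I^1)$ you produce need not be minimal, but since a minimal injective copresentation is a direct summand of any injective copresentation, the terms of the minimal one are still projective and $\domdim F(M_i)\ge 2$ follows anyway.
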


Around 2007, this correspondence was generalised by Iyama to the generalised higher Auslander correspondence \cite{Iyamahigher}.
Recall that a $B$-module $M$ is called \textit{$n$-cluster tilting} for $n \geq 1$ if we have for the subcategories $M^{\perp n-1}:= \{X \in \mod B \mid \Ext_B^i(M,X)=0, \ i=1,..,n-1 \}$ and
$^{\perp n-1} M:= \{ X \in \mod B \mid \Ext_B^i(X,M)=0, \ i=1,..,n-1 \}$ that $M^{\perp n-1}=^{\perp n-1}M=\add M$.
Note that for $n=1$ this means that $\add M= \mod B$ and thus $B$ has to be representation-finite with $M$ containing all indecomposable $B$-modules as a direct summand. Note also that $M$ being $n$-cluster tilting implies that $M$ contains every indecomposable projective and every indecomposable injective module as a direct summand, that is $M$ is a \textit{generator-cogenerator} of $\mod B$.
An algebra $A$ is called \textit{$n$-Auslander algebra} for $n \geq 2$ if $\gldim A \leq n \leq \domdim A$. When the specific $n$ is not relevant for us, we will call an $n$-higher Auslander algebra simply \textit{higher Auslander algebra}. The higher Auslander correspondence states:
\begin{theorem}
There is a bijective correspondence up to Morita equivalence between the classes of algebras $B$ with an $n$-cluster tilting module $M$ and $(n+1)$-Auslander algebras.
The correspondence is given by associating to $(B,M)$ the endomorphism algebra $A=\End_B(M)$ and associating to $A$ the tuple $(B,N)$, where $B=\End_A(N)$ is the base algebra of $A$ and $N$ the minimal faithful projective-injective $A$-module viewed as a $B$-module.
\end{theorem}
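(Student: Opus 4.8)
The plan is to view the statement as a homological refinement of the classical Morita–Tachikawa correspondence, which already supplies a bijection, up to Morita equivalence, between algebras $A$ with $\domdim A \geq 2$ and pairs $(B,M)$ in which $M$ is a generator--cogenerator of $\mod B$, realised by $A=\End_B(M)$ and recovered by $B=\End_A(N)$ with $N$ the minimal faithful projective-injective $A$-module and $M=N$ viewed as a $B$-module, exactly as in the statement. Since both the $n$-cluster tilting condition and the $(n+1)$-Auslander condition are refinements of this generator--cogenerator situation, the whole task reduces to matching the two refinements through the functor $F:=\Hom_B(M,-)\colon \mod B \to \mod A$, which restricts to an equivalence between $\add M$ and $\proj A$ and sends $M$ to the regular module $A$. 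The indecomposable summands of $M$ thus index both the indecomposable projective $A$-modules and the simple $A$-modules.

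First I would treat the forward assignment and show that $A=\End_B(M)$ is $(n+1)$-Auslander when $M$ is $n$-cluster tilting. Because $M$ contains all indecomposable projectives and injectives it is a generator--cogenerator, so $\domdim A \geq 2$; to upgrade this I would invoke the Müller-type identity asserting that for a generator--cogenerator one has $\domdim \End_B(M)\geq n+1$ if and only if $\Ext_B^i(M,M)=0$ for $1\leq i\leq n-1$, the right-hand vanishing being precisely the inclusion $\add M \subseteq M^{\perp n-1}$ built into the cluster tilting hypothesis. For the bound $\gldim A\leq n+1$ I would exploit the other half of the hypothesis: the equality $M^{\perp n-1}=\add M$ yields, for every $X\in\mod B$, an $\add M$-resolution of length $n$, and applying $F$ to such resolutions (exactness being guaranteed by the vanishing of the higher self-extensions of the terms) shows that every $A$-module in the image of $F$ has projective dimension at most $n$. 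Feeding this into the short exact sequences $0\to\rad P\to P\to S\to 0$ for the simple $A$-modules $S$ then gives $\pdim S\leq n+1$, hence $\gldim A\leq n+1$.

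Conversely, given an $(n+1)$-Auslander algebra $A$, the chain $\domdim A\geq n+1\geq 2$ places $A$ in the range of Morita--Tachikawa, so $A=\End_B(M)$ with $B$ its base algebra and $M=N$ as in the statement; it remains to verify that $M$ is $n$-cluster tilting. Reading the Müller identity backwards turns $\domdim A\geq n+1$ into $\Ext_B^i(M,M)=0$ for $1\leq i\leq n-1$, that is $\add M\subseteq M^{\perp n-1}\cap{}^{\perp n-1}M$. The essential point is the reverse inclusion $M^{\perp n-1}\subseteq\add M$: given $X$ with $\Ext_B^i(M,X)=0$ for $1\leq i\leq n-1$, I would use $\gldim A\leq n+1$ to bound the projective dimension of $\Hom_B(M,X)$ as an $A$-module and then transport a minimal projective resolution back across $F$, realising $X$ as a summand of a module in $\add M$. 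Finally, the assertion that the two assignments are mutually inverse is exactly the Morita--Tachikawa bijection restricted to the now-matched subclasses, using that $F$ identifies $\add M$ with $\proj A$ and the injective $B$-modules with the minimal faithful projective-injective $A$-module, so that the recovered $(B,M)$ agrees with the original up to Morita equivalence.

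The step I expect to be the main obstacle is the hard inclusion $M^{\perp n-1}\subseteq\add M$, together with its left-module analogue ${}^{\perp n-1}M\subseteq\add M$: the endomorphism construction only manifestly controls one side, whereas the definition of an $n$-cluster tilting module is left--right symmetric. To obtain the cogenerating, right-perpendicular half from the generating, left-perpendicular half I anticipate needing the self-duality of dominant dimension under $D=\Hom_K(-,K)$, so that the condition on $A$ can be read symmetrically on $A$ and $A^{op}$, combined with a careful analysis of how syzygies and cosyzygies interact with $F$. Keeping the length bookkeeping tight — in particular ensuring that the resolutions produced by $\gldim A\leq n+1$ have length exactly $n+1$ and that the approximation sequences in $\add M$ have length exactly $n$ — is where the argument must be run with the most care, since an off-by-one error here would collapse the distinction between consecutive values of $n$.
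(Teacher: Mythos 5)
The paper itself contains no proof of this statement: it is Iyama's higher Auslander correspondence, quoted with references to \cite{Iyamahigher} and to \cite{Iyamarevisited} for an elementary proof, so your proposal can only be measured against the standard argument in those sources. Your overall architecture (Morita--Tachikawa restricted along matching refinements, M\"uller's theorem with the correct indexing, resolutions pushed through $F=\Hom_B(M,-)$) is exactly that standard architecture, but two of your steps contain genuine gaps. The first is in the bound $\gldim A\le n+1$: you feed the projective-dimension bound for modules in the image of $F$ into $0\to\rad P\to P\to S\to 0$, which tacitly assumes $\rad P$ lies in the image of $F$. It does not in general: the essential image of $F$ consists precisely of the $A$-modules of dominant dimension at least $2$, and $\rad P$ usually fails this. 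Already for $n=1$, $B=K[x]/(x^2)$ and $M=B\oplus K$, the radical of $P=\Hom_B(M,K)$ is the simple $A$-module corresponding to the summand $B$; its injective envelope is the projective-injective $\Hom_B(M,B)$ but the next term of its coresolution is not projective, so it has dominant dimension $1$ and is not of the form $\Hom_B(M,Y)$. The repair is to descend two syzygies rather than one: for any $C\in\mod A$ with projective presentation $FM_1\xrightarrow{Ff}FM_0\to C\to 0$, left exactness of $F$ gives $\Omega^2C\cong F(\ker f)$. But this repair exposes your off-by-one: the approximation resolutions coming from $M^{\perp n-1}=\add M$ have $n$ terms in $\add M$ (the $(n-1)$-st kernel already lies in $M^{\perp n-1}=\add M$), so the sharp bound is $\pdim FX\le n-1$, whence $\pdim C\le (n-1)+2=n+1$; with your stated bound $\pdim FX\le n$, the corrected argument only yields $\gldim A\le n+2$. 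As written, your two inaccuracies cancel each other out, which is not a proof.

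The second gap is in the converse direction. The decisive inclusion $M^{\perp n-1}\subseteq\add M$ cannot be obtained by ``transporting a minimal projective resolution back across $F$'': the functor $F$ is only left exact, and projective resolutions do not pull back along it; moreover, the bound $\pdim_A\Hom_B(M,X)\le n+1$ holds for trivial reasons and carries no information about $X$. The standard argument runs through dominant dimension instead: applying $F$ to an injective copresentation of $X$ shows that $\Ext_B^i(M,X)=0$ for $1\le i\le n-1$ is equivalent to $\domdim_A FX\ge n+1$, i.e.\ there is an exact sequence $0\to FX\to Q^0\to\cdots\to Q^n$ with all $Q^j$ projective-injective. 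Hence $FX$ is, up to projective summands, an $(n+1)$-st syzygy of the cokernel of $Q^{n-1}\to Q^n$, and $\gldim A\le n+1$ forces every $(n+1)$-st syzygy to be projective; since $F$ restricts to an equivalence $\add M\simeq\proj A$, this gives $X\in\add M$. Your instinct that the left-hand inclusion ${}^{\perp n-1}M\subseteq\add M$ must be handled separately, by running the same argument over $A^{op}$ and using the left--right symmetry of dominant and global dimension, is correct and is how the symmetry is dealt with in the literature; it is the two steps above, not that one, that need to be rebuilt.
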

For an elementary proof and generalisations, we refer for example to the survey \cite{Iyamarevisited}.
Clearly, higher Auslander algebras are Auslander regular.
This raises the natural question: What is the Auslander-Reiten bijection for higher Auslander algebras?
The \textit{higher Auslander-Reiten translate} $\tau_n$ is defined as $\tau_n :=\tau \Omega^{n-1}$, where $\tau$ is the classical Auslander-Reiten translate. 
There is a natural operation, that we denote by $\kappa_M$ on the indecomposable summands of a cluster tilting module $M$: An indecomposable non-projective summand $N$ of $M$ is sent to $\kappa_M(N)=\tau_n(N)$ and an indecomposable projective summand $P$ of $M$ is sent to the indecomposable injective module $\kappa_M(P)=\nu_B(P)=D \Hom_B(P,A)$. Basically, $\kappa_M$ coincides with the higher Auslander-Reiten translate on indecomposable non-projective modules, while on the indecomposable projective modules it acts as the Nakayama functor.
Note that in the endomorphism algebra $A=\End_B(M)$ of an $n$-cluster tilting module $M$, the indecomposable projective $A$-modules are given by $L_N:=\Hom_B(M,N)$ and the indecomposable injective $A$-modules are given by $T_N:=D \Hom_B(N,M)$, parametrised by the indecomposable direct summands $N$ of $M$.
Then we can describe the Auslander-Reiten bijection for higher Auslander algebras as follows, see Theorem 4.6 in \cite{MTY} in combination with the fact that the grade bijection coincides with the Auslander-Reiten bijection:
\begin{theorem}
Let $B$ be an algebra with $n$-cluster tilting module $M$ and $A=\End_A(M)$.
For an indecomposable direct summand $N$ of $M$, we have for the Auslander-Reiten bijection
$\psi_A(T_N)=L_{\kappa_M(N)}$.
\end{theorem}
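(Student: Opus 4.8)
The plan is to reduce the statement to a computation of the grade bijection and then invoke the coincidence of the two permutations. Since every higher Auslander algebra is Auslander regular, as noted above, $A=\End_B(M)$ is in particular Auslander-Gorenstein, so both the Auslander-Reiten bijection $\psi_A$ and the grade bijection $\phi_A$ are defined. By Theorem \ref{ARpermequalgradeperm} the Auslander-Reiten permutation coincides with the grade permutation, so it suffices to show that, under the indexing below, the grade bijection $\phi_A$ agrees with $\kappa_M$.

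We index the simple $A$-modules by the indecomposable summands $N$ of $M$, so that the projective cover of $S(N)$ is $L_N=\Hom_B(M,N)$ and its injective envelope is $T_N=D\Hom_B(N,M)$. With this indexing the identity $\psi_A(T_N)=L_{N'}$ records exactly $\hat\psi(N)=N'$, and similarly $\phi_A(S(N))=S(N')$ records $\hat\phi(N)=N'$. Hence the desired equality $\psi_A(T_N)=L_{\kappa_M(N)}$ is equivalent to $\phi_A(S(N))=S(\kappa_M(N))$, that is, to $\top\bigl(D\Ext_A^{g}(S(N),A)\bigr)=S(\kappa_M(N))$ with $g=\grade S(N)$. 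This is the content of Theorem 4.6 in \cite{MTY}, and it remains to explain how that computation produces $\kappa_M$.

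First I would use the equivalence $\Hom_B(M,-)\colon \add M \simeq \proj A$ sending $M$ to $A$, together with the resulting natural isomorphism $\Hom_A(L_N,A)\cong \Hom_B(N,M)$. Applying $\Hom_A(-,A)$ to a minimal projective resolution of $S(N)$, whose terms are the images under $\Hom_B(M,-)$ of a resolution of $N$ by summands of $M$ coming from the $n$-cluster-tilting structure, turns the computation of $\Ext_A^\bullet(S(N),A)$ into the cohomology of the complex obtained by applying $\Hom_B(-,M)$ to that resolution. The grade $g$ is the position of the first nonzero cohomology, and $\top(D\Ext_A^{g}(S(N),A))$ can then be read off directly. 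Here the two clauses in the definition of $\kappa_M$ appear: when $N$ is non-projective the relevant sequence is the $n$-almost split sequence ending in $N$, whose left-hand term is $\tau_n N$, and the computation returns $\kappa_M(N)=\tau_n N$; when $N$ is projective the resolution degenerates and the Nakayama functor $\nu_B$ intervenes, returning $\kappa_M(N)=\nu_B(N)$.

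The main obstacle is precisely this $\Ext$-computation: one must verify that, under $\Hom_B(-,M)$, the first nonvanishing cohomology of the translated complex occurs in the expected degree and that its top is the simple indexed by $\tau_n N$ (respectively $\nu_B N$). This requires tracking carefully how the higher syzygies of $S(N)$ over $A$ correspond, via $\Hom_B(M,-)$, to the terms of the higher almost split sequence over $B$, and controlling the exactness of $\Hom_B(-,M)$ on $\add M$. Once this is established, everything else follows formally from Theorem \ref{ARpermequalgradeperm} and the indexing conventions for $L_N$ and $T_N$.
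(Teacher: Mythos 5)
Your proposal is correct and takes essentially the same route as the paper: the paper obtains the statement precisely by combining Theorem 4.6 of \cite{MTY}, which identifies the grade bijection of $A=\End_B(M)$ with $\kappa_M$ under the indexing by summands of $M$, with Theorem \ref{ARpermequalgradeperm} identifying the grade bijection with the Auslander-Reiten bijection. Your closing sketch of the $\Ext$-computation underlying \cite{MTY} is extra detail that the survey delegates entirely to that reference.
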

In other words: The Auslander-Reiten bijection of a higher Auslander algebra corresponds to the higher Auslander-Reiten translate in the cluster-tilting subcategory.
For example, in the classical case of Auslander algebras, the Auslander-Reiten bijection recovers the classical Auslander-Reiten translate for representation-finite algebras.
Cluster tilting modules play an important role in modern representation theory with many connections to other fields such as Lie theory, algebraic geometry, cluster algebras, combinatorics... We refer for example to the survey articles \cite{Iyamarevisited}, \cite{GLSsurvey} and \cite{Asssurvey} for more information and examples.
We include here the following recent class of examples of higher Auslander algebras, that also gave a new homological characterisation of path algebras of Dynkin type. The \textit{SGC-extension} (SGC is short for smallest generator-cogenerator) of an algebra $A$ is the endomorphism algebra $S_1(A):=\End_A(D(A) \oplus A)$. We can then define inductively the $n$-th SGC-extension algebra $S_n(A)$ of $A$ as $S_n(A):=S_1(S_{n-1}(A))$.
\begin{theorem}
\cite[Theorem 1.1]{CIMserre}
Let $A=KQ$ be a path algebra of a quiver $Q$. Then the following are equivalent:
\begin{enumerate}
    \item $A$ is of finite-representation-type, that is $Q$ is of Dynkin type.
    \item There exists an $n \geq 1$ such that $S_n(A)$ is a higher Auslander algebra.
    \item There exists infinitely many $n$ such that $S_n(A)$ is a higher Auslander algebra.
\end{enumerate}
\end{theorem}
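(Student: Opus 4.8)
The implication $(3) \Rightarrow (2)$ is immediate, so the plan is to prove $(2) \Rightarrow (1)$ and $(1) \Rightarrow (3)$, in both cases by tracking the pair $(\gldim S_n(A), \domdim S_n(A))$ along the tower. Two general facts organise this. First, since $B \oplus D(B)$ is a generator-cogenerator of $\mod B$, every SGC-extension satisfies $\domdim S_1(B) \geq 2$; and by Müller's characterisation of dominant dimension its exact value is governed by the self-orthogonality of $B \oplus D(B)$, which (as $\Ext^i$ out of a projective or into an injective vanishes) reduces to the vanishing range of $\Ext_B^i(D(B), B)$. Applying this to the hereditary algebra $A = KQ$ itself, where $\Ext_A^{\geq 2} = 0$, one gets $\domdim S_1(A) = 2$ as soon as $\Ext_A^1(D(A), A) \neq 0$, which holds for every connected non-semisimple $Q$. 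Hence $S_1(A)$ is higher Auslander only in the smallest cases, and the real content is how the two dimensions evolve under iteration, where the base algebra changes at each step.

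For $(1) \Rightarrow (3)$, I would use that a Dynkin quiver gives a representation-finite $A$ whose derived category $\Db(\mod A)$ carries a Serre functor $\mathbb{S}$ enjoying the fractional Calabi-Yau property $\mathbb{S}^m \cong [\ell]$. The plan is to give a derived-categorical model of the tower $S_n(A)$ in which the passage from $S_{n-1}(A)$ to $S_n(A)$ corresponds to sweeping the Serre/Nakayama functor once more across the finite set of indecomposables. At the stages $n$ matching a full Calabi-Yau period, the defining generator-cogenerator becomes self-orthogonal up to precisely the degree needed, so Müller's criterion forces $\domdim S_n(A)$ to rise; the remaining task is then to bound $\gldim S_n(A)$ from above by this same value, producing projective resolutions of the simple $S_n(A)$-modules of the required length from the mesh relations of $\mathbb{Z}Q$. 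Since $\mathbb{S}$ is periodic, such stages $n$ occur cofinally, yielding infinitely many higher Auslander algebras and hence $(3)$.

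For $(2) \Rightarrow (1)$ I would argue contrapositively: if $Q$ is not Dynkin then $A$ is representation-infinite and $\Db(\mod A)$ has no fractional Calabi-Yau periodicity. The plan is to show that the self-orthogonality needed to push $\domdim S_n(A)$ above $2$ never occurs, so the dominant dimension stays bounded, while each SGC-step genuinely enlarges the module category (no indecomposable is ever exhausted) and forces $\gldim S_n(A)$ to exceed it. Then $\gldim S_n(A) > \domdim S_n(A)$ for all $n$, and no $S_n(A)$ is higher Auslander.

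The hard part will be $(1) \Rightarrow (3)$: matching the genuinely base-changing recursion $S_n(A) = S_1(S_{n-1}(A))$ with a clean derived model, so that Müller's self-orthogonality criterion can be read directly off the Calabi-Yau period, and, at those good stages, establishing the \emph{coincidence} $\gldim S_n(A) = \domdim S_n(A)$ rather than merely a large dominant dimension. Controlling the global dimension from above is where the Dynkin hypothesis — through finiteness of the indecomposables and the periodicity of $\mathbb{S}$ — enters in an essential way.
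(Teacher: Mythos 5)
Your global strategy for the two hard directions --- Müller's criterion for $\domdim \End_B(B \oplus D(B))$, the fractional Calabi--Yau property of Dynkin path algebras, and a derived-category model of the SGC tower via the Serre functor --- is in outline the strategy of the cited paper, where the notion of Serre-formal algebras supplies exactly the ``derived model'' you postpone (every hereditary algebra is Serre-formal, since all objects of $\Db(\mod KQ)$ are formal). So $(1)\Rightarrow(3)$ is a fair sketch whose acknowledged hard step is the real content. The genuine gap is in $(2)\Rightarrow(1)$: your argument hinges on the claim that for non-Dynkin $Q$ the self-orthogonality of $S_{n-1}(A)\oplus D(S_{n-1}(A))$ ``never occurs'', so that $\domdim S_n(A)$ stays bounded by $2$ while the global dimension escapes. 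That claim is false, already for the Kronecker quiver. Put $M = A\oplus D(A)$ and $B = S_1(A) = \End_A(M)$; then indeed $\domdim B = 2$ and $\gldim B = 3$, so $B$ is not higher Auslander, but the next step behaves differently: writing $P_s, P_b, I_b, I_s$ for the two indecomposable projective and two indecomposable injective $A$-modules, the two non-projective indecomposable injective $B$-modules have projective resolutions obtained by applying $\Hom_A(M,-)$ to the exact sequences of $A$-modules $0 \to P_s^4 \to P_b^3 \to I_b^2 \to I_s \to 0$ and $0 \to P_s^5 \to P_b^4 \to I_b^3 \to I_s^2 \to 0$, and computing $\Ext^*_B(-,B)$ from these resolutions via the isomorphism $\Hom_B(\Hom_A(M,X),B)\cong \Hom_A(X,M)$ yields $\Ext^1_B(D(B),B) = \Ext^2_B(D(B),B) = 0$. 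By Müller's theorem this gives $\domdim S_2(A) \geq 4$. So self-orthogonality does occur for non-Dynkin quivers, and the dominant dimension climbs along the tower.

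This is not a repairable detail but a wrong mechanism: the growth of the dominant dimension along the SGC tower happens for every connected non-semisimple hereditary algebra, Dynkin or not (this is precisely what the Serre-formal machinery establishes in general), so it cannot be the dividing line. What distinguishes Dynkin quivers is that only under the fractional Calabi--Yau property does the global dimension stay finite and catch up with the dominant dimension at suitable (infinitely many) stages, whereas for non-Dynkin quivers $\gldim S_n(A)$ remains strictly larger than $\domdim S_n(A)$ for every $n$, even though both grow --- for the Kronecker algebra the pair is $(3,2)$ at $n=1$, and at $n=2$ the dominant dimension has already risen to $4$. Establishing this persistent gap is the actual content of the converse direction, and once the bounded-dominant-dimension premise is removed your proposal offers no mechanism for it; your auxiliary claim that each SGC step ``forces $\gldim$ to exceed'' the dominant dimension is exactly what must be proved. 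In the cited work the converse is extracted from the same Serre-functor calculus: a higher Auslander $S_n(A)$ forces, through the explicit description of the tower and the higher Auslander correspondence, the twisted fractional Calabi--Yau property of $A$, which for path algebras is equivalent to $Q$ being Dynkin. Your $(2)\Rightarrow(1)$ needs to be replaced by an argument of this kind.
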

We refer to the article \cite{CIMserre} for explicit examples and generalisations. 
Thus, there is an infinite series of higher Auslander algebras $S_n(KQ)$ associated to every path algebra $KQ$ of Dynkin type. We remark that the global dimension of the algebras $S_n(KQ)$ is strictly increasing in $n$.

A recent result gave a new characterisation of higher Auslander algebras using global properties of their modules as follows:
\begin{theorem} \label{higherauscharacterisation}
Let $A$ be a connected non-semisimple algebra of finite global dimension $g$.
Then the following are equivalent:
\begin{enumerate}
    \item $A$ satisfies $\domdim A= \gldim A \geq 1$.
    \item For every module $M$ that is not projective-injective, we have $\domdim M+\pdim M=g$.
\end{enumerate}
\end{theorem}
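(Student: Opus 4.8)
The plan is to prove the two implications separately, and to prove the harder direction (1) $\Rightarrow$ (2) by breaking the asserted equality $\domdim M + \pdim M = g$ into two inequalities established independently, only one of which uses the hypothesis. Throughout I would lean on the elementary facts that $\domdim(X\oplus Y)=\min\{\domdim X,\domdim Y\}$, that a projective-injective module has infinite dominant dimension, and that $\domdim X\le \idim X\le \gldim A=g$ whenever $X$ is not projective-injective; in particular $\domdim A\le g$, so condition (1) says the dominant dimension is as large as it can be. For (2) $\Rightarrow$ (1): since $A$ has finite global dimension and is non-semisimple it is not selfinjective, so some indecomposable projective $P$ is not injective; such $P$ is not projective-injective and has $\pdim P=0$, hence (2) forces $\domdim P=g$, while every projective-injective indecomposable projective has infinite dominant dimension. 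As $\domdim A$ is the minimum of the dominant dimensions of the indecomposable projectives, $\domdim A=g$, and $g\ge 1$ because $A$ is non-semisimple.

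For the upper bound $\domdim M+\pdim M\le g$, which holds for every non-projective-injective $M$ and does not need the hypothesis, I would work along the minimal injective coresolution. Writing $d=\domdim M$ and $p=\pdim M\ge 1$, the terms $I^0,\dots,I^{d-1}$ are projective, hence of projective dimension $0$. Splitting the coresolution into short exact sequences $0\to\Omega^{-j}M\to I^{j}\to\Omega^{-(j+1)}M\to 0$ and using $\pdim\Omega^{-j}M\le\max\{\pdim I^{j},\ \pdim\Omega^{-(j+1)}M-1\}$ together with $\pdim\Omega^{-j}M\ge 1$, I would show inductively that $\pdim\Omega^{-(j+1)}M\ge\pdim\Omega^{-j}M+1$, so that $\pdim\Omega^{-d}M\ge p+d$. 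Since $\pdim\Omega^{-d}M\le g$, this gives $p+d\le g$ (the case $p=0$ being the trivial bound $\domdim M\le\idim M\le g$).

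The lower bound $\domdim M+\pdim M\ge g$ is where (1) enters, and the conceptual point is that it must be proved on the projective-resolution side. The key input is the short exact sequence inequality
\[
\domdim N\ \ge\ \min\{\domdim M,\ \domdim L-1\}\qquad\text{for }0\to L\to M\to N\to 0 .
\]
I would prove this by forming the pushout of $L\hookrightarrow M$ along the injective envelope $L\hookrightarrow I(L)$ (projective-injective once $\domdim L\ge 1$, the only nontrivial case): the resulting module $E$ sits in $0\to M\to E\to\Omega^{-1}L\to 0$, so the horseshoe-based subadditivity and $\domdim\Omega^{-1}L=\domdim L-1$ give $\domdim E\ge\min\{\domdim M,\domdim L-1\}$, while the other pushout sequence $0\to I(L)\to E\to N\to 0$ splits because $I(L)$ is injective, whence $\domdim N=\domdim E$. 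Granting this, I would iterate along a minimal projective resolution $0\to P_p\to\cdots\to P_0\to M\to 0$: each $P_i$ is projective, so $\domdim P_i\ge\domdim A=g$, and $\Omega^{p}M\cong P_p$ already has $\domdim\ge g$. Feeding the inequality into each $0\to\Omega^{i+1}M\to P_i\to\Omega^{i}M\to 0$ lowers the bound by at most one, yielding $\domdim\Omega^{i}M\ge g-(p-i)$ by downward induction, and in particular $\domdim M\ge g-p$. Combined with the upper bound this is the equality.

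The main obstacle is not a single computation but the bridging of the two worlds: $\pdim$ is governed by syzygies and projective resolutions, whereas $\domdim$ is governed by cosyzygies and injective coresolutions, and the content of the theorem is precisely that under (1) these two filtrations are complementary. The technical heart is therefore the correct formulation and proof of the dominant-dimension short exact sequence inequalities, in particular the splitting trick for $0\to I(L)\to E\to N\to 0$, together with the bookkeeping that guarantees the projective cover of a non-projective module has a non-injective (hence non-projective-injective) syzygy, so that the inductive steps only ever apply the statement to genuinely non-projective-injective modules.
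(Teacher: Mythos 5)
Your proposal is correct, but it does not follow the paper's route, because the paper essentially has no self-contained proof to follow: for (2) $\Rightarrow$ (1) the survey gives exactly your argument (substitute the non-injective indecomposable projectives, which exist since a non-semisimple algebra of finite global dimension cannot be selfinjective, into condition (2), and use that projective-injectives have infinite dominant dimension), while for the hard direction (1) $\Rightarrow$ (2) it simply cites Corollary 4.8 of \cite{ChenIyengarM}. You instead prove (1) $\Rightarrow$ (2) directly, and your two-step decomposition is sound: the unconditional upper bound $\domdim M + \pdim M \le g$ follows from your cosyzygy induction, since $\pdim \Omega^{-j}M \le \max\{0,\ \pdim \Omega^{-(j+1)}M - 1\}$ together with $\pdim \Omega^{-j}M \ge 1$ forces the projective dimension to grow strictly along the projective-injective part of the minimal injective coresolution; and the lower bound $\domdim M \ge g - \pdim M$ follows from your pushout lemma $\domdim N \ge \min\{\domdim M,\ \domdim L - 1\}$ for $0 \to L \to M \to N \to 0$ (the splitting of $0 \to I(L) \to E \to N \to 0$ and the horseshoe subadditivity of $\domdim$ are both correct), iterated down the minimal projective resolution using $\domdim P_i \ge \domdim A = g$. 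What your route buys is a complete elementary argument inside the survey, plus the genuinely useful observation that the inequality $\domdim M + \pdim M \le g$ needs no hypothesis beyond finite global dimension; what the paper's citation buys is brevity and the link to the Auslander--Buchsbaum-type formulas of \cite{ChenIyengarM} and \cite{CMAusBuchs}. One small remark: the ``bookkeeping'' you worry about at the end is unnecessary, since your pushout lemma and the horseshoe estimate hold for arbitrary short exact sequences and arbitrary modules; the non-projective-injectivity of $M$ is needed only once, to guarantee that $\domdim M$ is finite (equivalently $\domdim M \le \idim M \le g$), so that the two bounds combine into the asserted equality.
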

That (2) implies (1) follows almost immediately from the definition by substituting the indecomposable projective non-injective modules for $M$. The direction (1) implies (2) can be found in Corollary 4.8 of \cite{ChenIyengarM}.
Note that the algebras with $\gldim A=\domdim A=1$ are exactly the algebras of upper triangular $m \times m$-matrices over a division ring up to Morita equivalence, see Proposition 1.17 in \cite{Iyamaclusternrep}.
The formula $\domdim M+\pdim M=g$ for higher Auslander algebras of global dimension $g$ can be seen as a non-commutative analogue for Artin algebras of the classical Auslander-Buchsbaum formula in commutative algebra, see also \cite{CMAusBuchs} for more information and generalisations.
This raises the following question:
\begin{question}
Is there a generalisation of Theorem \ref{higherauscharacterisation} for Auslander regular algebras?
\end{question}

Recently, higher Auslander algebras were generalised in \cite{CIMdom} to the class of \textit{dominant Auslander regular} algebras $A$, defined by the condition that every indecomposable projective $A$-modules $P$ satisfies $\idim P \leq n_P \leq \domdim P$ for some natural number $n_P \geq 1$
that depends on $P$.
There is also a generalisation of cluster tilting modules: A $B$-module $M$ that is a generator-cogenerator is called \textit{mixed cluster tilting} if it satisfies the following conditions:
\begin{enumerate}
\item  For each indecomposable non-projective direct summand $X$ of $M$, there exists $\ell_X \geq 1$ such that $\Ext_B^i(X,M)=0$ for all $1 \leq i < \ell_X$ and $\tau_{\ell_X}(X) \in \add M$.
\item $\bigcap\limits_{X \in \operatorname{ind}(\add M)}^{}{X^{\perp \ell_X -1} = \add M}$.
\end{enumerate}

Then we can state the following generalisation of the higher Auslander correspondence:
\begin{theorem}
\cite[Theorem 3.9]{CIMdom}
There is a bijective correspondence up to Morita equivalence between the classes of algebras $B$ with a mixed cluster tilting module $M$ and dominant Auslander regular algebras $A$ of dominant dimension at least 2.
The correspondence is given by associating to $(B,M)$ the endomorphism algebra $A=\End_B(M)$ and associating to $A$ the tuple $(B,N)$, where $B=\End_A(N)$ is the base algebra of $A$ and $N$ the minimal faithful projective-injective $A$-module viewed as a $B$-module. 
\end{theorem}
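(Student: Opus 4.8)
The plan is to erect the whole argument on the classical Morita--Tachikawa correspondence, which already supplies the underlying bijection, so that the only genuinely new content is the matching of the two decorations. Recall that for a basic generator--cogenerator $M$ of $\mod B$ the algebra $A=\End_B(M)$ has $\domdim A\ge 2$, that the functor $F=\Hom_B(M,-)$ restricts to an equivalence $\add M\xrightarrow{\sim}\proj A$, and that $D\Hom_B(-,M)$ restricts to a (contravariant) equivalence $\add M\xrightarrow{\sim}\inj A$; conversely, for any $A$ with $\domdim A\ge 2$ the minimal faithful projective--injective module $N$, regarded over its base algebra $B=\End_A(N)$, is a generator--cogenerator with $\End_B(N)\cong A$ and $N\cong M$. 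Thus the two assignments in the statement are already mutually inverse up to Morita equivalence at the level of generator--cogenerators, and it remains only to verify that they interchange the properties \emph{``$M$ is mixed cluster tilting''} and \emph{``$A$ is dominant Auslander regular of dominant dimension at least $2$''}.

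The central step is a homological dictionary translating data of an indecomposable summand $X$ of $M$ into data of the indecomposable projective $A$-module $P_X:=FX=\Hom_B(M,X)$. On the one hand, using the standard comparison of extension groups through $F$ together with the M\"uller-type characterisation of dominant dimension applied summand by summand, I would show that the dominant dimension of $P_X$ is controlled by the length of the initial range in which $\Ext_B^i(X,M)$ vanishes, so that $\domdim P_X\ge \ell$ holds precisely when $\Ext_B^i(X,M)=0$ for all $i$ in the relevant range below $\ell$ (the exact shift being absorbed into the definition of the threshold). On the other hand, I would build a minimal injective coresolution of $P_X$ in $\mod A$ by dualising a minimal $\add M$-coresolution of $X$ in $\mod B$; here the higher Auslander--Reiten translate enters exactly as in the description $\psi_A(T_N)=L_{\kappa_M(N)}$ recalled above, and the Auslander--Reiten formula yields that $\idim_A P_X$ is finite and bounded by $\ell$ precisely when $\tau_{\ell}(X)=\tau\Omega^{\ell-1}(X)\in\add M$. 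These two computations are the technical heart of the proof.

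With the dictionary in hand, condition~(1) of mixed cluster tilting for a summand $X$ --- the existence of $\ell_X\ge 1$ with $\Ext_B^i(X,M)=0$ for $1\le i<\ell_X$ and $\tau_{\ell_X}(X)\in\add M$ --- translates directly into the inequality $\idim_A P_X\le n_{P_X}\le\domdim_A P_X$ with $n_{P_X}:=\ell_X$, which is the defining condition of a dominant Auslander regular algebra. The role of the intersection condition~(2), namely $\bigcap_{X\in\operatorname{ind}(\add M)}X^{\perp\,\ell_X-1}=\add M$, is then to make these individual bounds globally coherent: it is the summand-wise refinement of the cluster-tilting covering identity $M^{\perp}=\add M$, and it guarantees both that $F$ identifies $\add M$ with \emph{all} of $\proj A$ --- so that every indecomposable projective is some $P_X$ and hence carries a bound --- and that no indecomposable projective is omitted. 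Running the dictionary in reverse, from a dominant Auslander regular $A$ with $\domdim A\ge 2$ one reads off $\ell_X:=n_{P_X}$ for each indecomposable projective $P_X=FX$ and recovers conditions~(1) and~(2) for $M=N$ over $B$, after which Step~1 closes the bijection.

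The main obstacle I anticipate is the injective-dimension half of the dictionary: relating $\idim_A P_X$ to the membership $\tau_{\ell_X}(X)\in\add M$ requires carefully tracking the interplay between minimal $\add M$-coresolutions over $B$, the duality $D$, and the classical translate $\tau=D\Tr$ concealed inside $\tau_{\ell_X}$, and this is exactly where off-by-one index errors and the difference between mere finiteness and exact boundedness of the injective dimension are most likely to creep in. A secondary delicate point is to verify that condition~(2), with its summand-dependent degrees $\ell_X-1$, is precisely strong enough to force the bound for every indecomposable projective and no weaker; getting the threshold to match the dominant Auslander regular inequality simultaneously for all summands, rather than for a uniform degree as in the classical cluster-tilting case, is the feature that distinguishes this correspondence from Iyama's and demands the full force of the per-summand formulation.
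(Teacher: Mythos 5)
The survey states this theorem without proof, citing \cite{CIMdom}, so your proposal can only be measured against the argument given there; your skeleton --- reduce via Morita--Tachikawa to the equivalence ``$M$ mixed cluster tilting $\Leftrightarrow$ $\End_B(M)$ dominant Auslander regular'', then prove a summand-wise dictionary (M\"uller-type control of $\domdim \Hom_B(M,X)$ by $\Ext$-vanishing, and control of $\idim \Hom_B(M,X)$ by $\tau_{\ell}(X)\in\add M$) --- is indeed the skeleton of that proof. But there are two genuine gaps. The first concerns condition (2). You claim its role is to guarantee that $\Hom_B(M,-)$ identifies $\add M$ with all of $\proj A$; that is automatic for \emph{every} generator--cogenerator (projectivization), so on your reading condition (2) would be redundant, and then the statement could not be a bijection. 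What condition (2) actually delivers is the word \emph{regular}, i.e.\ $\gldim A<\infty$, which your proof never establishes; it is the per-summand analogue of the implication $M^{\perp n-1}=\add M\ \Rightarrow\ \gldim\End_B(M)\le n+1$ in Iyama's correspondence. A concrete test: let $B$ be self-injective and not semisimple, and $M=B$, a generator--cogenerator since $\add D(B)=\add B$. Condition (1) is vacuous (there are no non-projective summands), every indecomposable projective module over $A=\End_B(B)=B$ is injective, so $\idim P=0\le 1\le\domdim P=\infty$ holds for all of them, and $\domdim A=\infty\ge 2$; yet $A$ is not dominant Auslander regular because $\gldim A=\infty$, and correspondingly $M$ fails condition (2), since the intersection there equals $\mod B\ne\add B$. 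Your argument, which extracts everything from condition (1) and parks condition (2) in a vacuous role, would place this $A$ in the target class, and the bijection collapses.

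The second gap: condition (1) constrains only the \emph{non-projective} summands $X$ of $M$, while dominant Auslander regularity is a condition on \emph{all} indecomposable projective $A$-modules, including $P_X=\Hom_B(M,X)$ for $X$ projective non-injective over $B$. For such $X$ both halves of your dictionary degenerate: $\Ext_B^i(X,M)=0$ holds automatically and $\tau_\ell(X)=0$, and one may \emph{not} conclude $\domdim P_X=\infty$ from the M\"uller-type statement. Indeed, for $B$ the path algebra of $1\rightarrow 2$, $M$ the sum of the three indecomposables, and $X=P(2)$ the projective non-injective simple, $P_X$ is the one-dimensional simple projective over the Auslander algebra $A=\End_B(M)$ and a direct computation gives $\idim P_X=\domdim P_X=2$, both finite. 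So these projectives require a separate, dual argument (minimal injective coresolutions built from $D\Hom_B(-,M)$ and $\tau^{-}$-translates, or an appeal to the left--right symmetry of the dominant Auslander--Gorenstein condition), and this is a place where \cite{CIMdom} does real work that your outline omits. Finally, the off-by-one you flagged is real but harmless: the correct threshold is $n_{P_X}=\ell_X+1$, since condition (1) yields $\domdim P_X\ge\ell_X+1$ and $\idim P_X\le\ell_X+1$, matching the classical case where an $n$-cluster tilting module produces bounds $\gldim A\le n+1\le\domdim A$.
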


For the rest of this section, assume that the field $K$ is algebraically closed and is of characteristic $\neq 2$. This condition ensures that Auslander algebras are Koszul.
Our main motivation for the introduction of dominant Auslander regular algebras was the following question by Green, see Section 5 in \cite{Green}:
\begin{question} 
Assume $E$ is a Koszul algebra satisfying the following properties:
\begin{enumerate}
    \item The Loewy length of $E$ is 3.
    \item Each indecomposable projective $E$-module of Loewy length 3 is injective.
    \item $E$ has positive dominant dimension.
\end{enumerate}
What extra conditions imply that the Koszul dual of E is an Auslander algebra? 
\end{question}
For details on Koszul algebras and their quadratic dual, we refer for example to \cite{GreenMVsurvey}.
Using dominant Auslander regular algebras we were able to answer this question by replacing condition (3) by the stronger condition of being dominant Auslander regular:

\begin{theorem}
Assume $E$ is a Koszul algebra satisfying the following properties:
\begin{enumerate}
    \item The Loewy length of $E$ is 3.
    \item Each indecomposable projective $E$-module of Loewy length 3 is injective.
    \item $E$ is dominant Auslander regular.
\end{enumerate}
Then the Koszul dual of $E$ is an Auslander algebra.
\end{theorem}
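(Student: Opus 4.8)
The plan is to set $A:=E^{!}$ and to verify directly the two inequalities defining an Auslander algebra, $\gldim A\le 2$ and $\domdim A\ge 2$, after first recording that $A$ is finite-dimensional. Since a dominant Auslander regular algebra has finite global dimension, we have $\Ext_E^{j}(E_0,E_0)=0$ for $j\gg 0$, where $E_0=E/\rad E$; hence $A=\bigoplus_{j\ge 0}\Ext_E^{j}(E_0,E_0)$ is finite-dimensional, it is again a Koszul algebra, and $A^{!}\cong E$. I observe at the outset that once $\gldim A=2$ is known, $A$ is not self-injective, so by minimality the top term of the finite minimal injective coresolution of the regular module $A_A$ is non-projective; therefore $\domdim A\le\idim A_A\le\gldim A=2$, and it suffices to prove the reverse inequality $\domdim A\ge 2$.

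For the global dimension I would use the Koszul complex. For a finite-dimensional Koszul algebra the minimal graded projective resolution of the semisimple module $A_0$ has $j$-th term $A\otimes_{A_0}((A^{!})_{j})^{*}$, so $\gldim A=\pdim_A A_0$ equals the top nonzero degree of $A^{!}$. Here $A^{!}\cong E$, which is concentrated in degrees $0,1,2$ because its Loewy length is $3$; thus the top nonzero degree is $2$ and $\gldim A=2$.

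The substance of the argument is the lower bound $\domdim A\ge 2$, which I would obtain by transporting hypotheses (2) and (3) across Koszul duality. Under the Koszul duality equivalence between the bounded derived categories of graded $E$-modules and graded $A$-modules, the simple $E$-modules correspond to the indecomposable projective $A$-modules, and the minimal injective coresolution of $A_A$ is matched—up to the internal regrading—with the homological data of the indecomposable projective $E$-modules. The indecomposable projective-injective $A$-modules, which control the dominant dimension, then correspond to the indecomposable projective-injective $E$-modules; by hypothesis (2) the Loewy-length-$3$ projectives of $E$ are injective, so such modules exist in abundance and one can realise $A_A$ as a submodule of a projective-injective $A$-module, giving $\domdim A\ge 1$. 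To pass from $1$ to $2$ I would feed in hypothesis (3): the sandwich $\idim_E P\le n_P\le\domdim_E P$, valid for every indecomposable projective $E$-module $P$, is exactly what dualizes to the projectivity of the term $I^{1}$ in the minimal injective coresolution $0\to A_A\to I^{0}\to I^{1}\to\cdots$; concretely, $\idim_E P\le\domdim_E P$ is meant to force the cokernel of $A_A\hookrightarrow I^{0}$ to embed again into a projective-injective module. Combined with $\gldim A=2$ (and the resulting $\domdim A\le 2$), this yields $\domdim A=2$ and exhibits $A=E^{!}$ as an Auslander algebra.

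The hard part is making this last translation precise. The Koszul duality functor interleaves homological degree with internal grading, so the correspondence between ``a projective term in the injective coresolution of $A$'' and ``a numerical condition on the injective and dominant dimensions of a projective $E$-module'' must be set up while keeping track of both gradings simultaneously. In particular one has to show that the dominant Auslander regular sandwich governs exactly the first two cohomological steps of the coresolution of $A_A$: the existence of projective-injectives from (2) handles the step $I^{0}$, and the inequality $\idim_E P\le\domdim_E P$ from (3) handles the step $I^{1}$. Once this is verified, conditions (1)–(3) together give $\gldim A=2=\domdim A$ and the theorem follows.
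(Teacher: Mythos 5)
The paper itself contains no proof of this theorem: it explicitly defers the argument to the forthcoming article \cite{CIMfuture}, so there is no internal proof to compare yours against, and your proposal must be judged on its own. Its first half is correct and standard: $\gldim E<\infty$ (part of the definition of dominant Auslander regular in \cite{CIMdom}) gives finite-dimensionality of $A=E^{!}$, and the Koszul complex together with $A^{!}\cong E$ and hypothesis (1) gives $\gldim A=\pdim_A A_0=\max\{j \mid E_j\neq 0\}=2$.

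The second half, $\domdim A\ge 2$, is where the entire content of the theorem lies, and there you have a plan rather than a proof, as you yourself concede (``is meant to force'', ``one has to show''). Moreover, the principle your plan rests on is not a fact of Koszul duality: it is false in general that the indecomposable projective-injective modules of $E$ and of $E^{!}$ correspond. Concretely, let $E=KQ/\rad^2$ be the radical-square-zero algebra of the linearly oriented $\DynA_3$ quiver. This $E$ is Koszul, it is even dominant Auslander regular, and it has \emph{two} indecomposable projective-injective modules; its Koszul dual is the hereditary path algebra of the opposite quiver, which has only \emph{one} indecomposable projective-injective module and has dominant dimension $1$. So no bijection of the kind you invoke exists, and the conclusion can genuinely fail when only hypotheses (2) and (3) hold (here Loewy length is $2$, not $3$). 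Koszul duality matches simples with projectives while interleaving homological and internal degrees; it does not respect the property of being projective-injective. Consequently both of your steps --- $\domdim A\ge 1$ via ``abundance'' of projective-injectives, and the upgrade to $\domdim A\ge 2$ via the sandwich $\idim_E P\le n_P\le\domdim_E P$ --- would have to be established by an actual computation of the minimal coresolution $0\to A\to I^0\to I^1\to\cdots$, for instance by expressing $\Ext$-groups over $A$ through regraded $\Ext$-groups over $E$ and feeding in hypotheses (1)--(3) degreewise. That computation is exactly what is missing, so the proposal does not prove the theorem.
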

A proof of this was part of the first version of the article \cite{CIMdom} but we decided to put it in a future forthcoming article where we will discuss Koszul duality and connections with the Auslander regular property in a greater generality \cite{CIMfuture}.

As a final remark for this section, we note that all the notions in this section have an Iwanaga-Gorenstein analogue for algebras of infinite global dimension, we refer to \cite{IyamaSolberg} and \cite{CIMdom} for more details.

\section{Auslander regular algebras of global dimension at most two and diagonal Auslander regular algebras}
It is a natural problem to classify Auslander regular algebras. This was done by Iyama for global dimension at most 2 and this can be seen as another way of generalising the classical Auslander correspondence.
We first recall some basics on the theory of torsion classes. A subcategory $\mathcal{T}$ of $\mod A$ is called a \textit{torsion class} if it is closed under factor modules and extensions and a subcategory $\mathcal{F}$ of $\mod A$ is called a \textit{torsionfree class} if it is closed under submodules and extensions. A pair $(\mathcal{T},\mathcal{F})$ of full subcategories of $\mod A$ is called a \textit{torsion theory} if $\mathcal{F}$ is torsionfree and $\mathcal{T}=\{ X \in \mod A \mid \Hom_A(X,Y)=0 \ \text{for all} \ Y \in \mathcal{F} \}$, which is then a torsion class uniquely determined by $\mathcal{F}$. 
We call such a torsion theory $(\mathcal{T},\mathcal{F})$ \textit{faithful} if $A \in \mathcal{F}$ and we call it \textit{hereditary} if $\mathcal{T}$ is closed under submodules.
For more information and examples of torsion theories, we refer for example to Chapter VI. in \cite{ASS}. 
We can now state the classification of Auslander regular algebras of global dimension at most two:
\begin{theorem}
\cite[Theorem 3.1]{Iyamaausreg2}
There is a bijective correspondence up to Morita equivalence between Auslander regular algebras $A$ of global dimension at most two and algebras $B$ with a torsion theory $(\mathcal{T},\mathcal{F})$ that is faithful and hereditary and $\mathcal{F}$ is representation-finite. 
\end{theorem}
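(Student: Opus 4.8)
The plan is to follow the template of the classical Auslander correspondence, replacing the role of $\mod B$ for a representation-finite algebra $B$ by a representation-finite torsionfree class, and to read off the global homological behaviour of $A = \End_B(M)$ from the closure properties of $(\mathcal{T},\mathcal{F})$.

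\textbf{From $(B,\mathcal{T},\mathcal{F})$ to $A$.} First I would set $M := \bigoplus_{X \in \operatorname{ind}\mathcal{F}} X$, which is a finite direct sum because $\mathcal{F}$ is representation-finite, and put $A := \End_B(M)$. Since $\mathcal{F}$ is closed under extensions and $\operatorname{ind}\mathcal{F}$ is finite, one checks $\add M = \mathcal{F}$; and since the torsion theory is faithful, $B \in \mathcal{F}$, so $M$ is a generator of $\mod B$. As usual $\Hom_B(M,-)$ restricts to an equivalence $\add M \simeq \proj A$, sending an indecomposable summand $X$ of $M$ to $P_X := \Hom_B(M,X)$. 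The decisive point is the bound $\gldim A \le 2$: for an indecomposable $X \in \mathcal{F}$ take the minimal right almost split map $g\colon Y \to X$ in $\add M$, giving a minimal projective presentation $\Hom_B(M,Y) \to P_X \to S_X \to 0$ of the simple top $S_X$. One finds $\Omega^2 S_X \cong \Hom_B(M,\ker g)$, and here $\ker g \subseteq Y \in \mathcal{F}$ lies again in $\mathcal{F}$ because $\mathcal{F}$ is closed under submodules. This is precisely where hereditariness enters: it forces $\Omega^2 S_X$ to be projective, whence $\pdim_A S_X \le 2$ and $\gldim A \le 2$.

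\textbf{The Auslander condition.} Next I would verify that $A$ is Auslander regular. Because $\mathcal{F}$ is the torsionfree class of a hereditary torsion theory, it is closed under injective envelopes, so for $X \in \mathcal{F}$ the injective envelope $E(X)$ lies in $\add M$ and $\Hom_B(M,E(X))$ is projective-injective; this gives $\domdim A \ge 1$, that is $\pdim I^0 = 0$ in the minimal injective coresolution $0 \to A \to I^0 \to I^1 \to I^2 \to 0$. The remaining content is the inequality $\pdim I^1 \le 1$, which I would establish through characterisation (2) of Theorem \ref{AusGorcharacterisation}: the indecomposable injective $A$-modules are again indexed by $\operatorname{ind}\mathcal{F}$, with $I_X$ of socle $S_X$, and one computes $\pdim I_X$ and $\grade S_X$ from approximation sequences inside $\mathcal{F}$, checking that submodule-closure forces $\pdim I_X = \grade\soc I_X$. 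I expect this bookkeeping with the injective coresolution to be the main technical obstacle of the forward direction.

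\textbf{From $A$ back to $(B,\mathcal{T},\mathcal{F})$ and mutual inverseness.} For the reverse assignment the naive guess, taking $B$ to be the base algebra, fails: the example $A = KQ$ with $Q\colon 1\to 2$ is Auslander regular of global dimension $1$ but has base algebra $K$, whereas the correct $B$ is $KQ$ itself. Instead I would reconstruct the data categorically. The equivalence $\proj A \simeq \add M$ realises $\proj A$ as an exact subcategory closed under submodules; take $\mathcal{F}$ to be this subcategory, recover $B = \End_{\mathcal{F}}(G)$ where $G$ is the additive generator of the Ext-projective objects of $\mathcal{F}$ (these are the projective $B$-modules), and define $\mathcal{T} := \{Z : \Hom_B(Z,\mathcal{F})=0\}$. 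The hard part will be proving that this is well defined and mutually inverse to the forward construction: concretely, that the Auslander condition on $A$ is equivalent to closure of $\mathcal{F}$ under submodules, and that faithfulness corresponds to $M$ being a generator. I would organise the inverse check around the equivalence $\add M \simeq \proj A$ together with the two homological characterisations above, matching each property of $(\mathcal{T},\mathcal{F})$ with a homological feature of $A$ read off from Theorem \ref{AusGorcharacterisation}.
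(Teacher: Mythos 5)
The survey gives no proof of this theorem (it is quoted from \cite[Theorem 3.1]{Iyamaausreg2}), so your sketch must stand on its own, and it contains one genuine error plus two essential omissions. The error is that you use ``hereditary'' to mean ``$\mathcal{F}$ is closed under submodules''. In the paper's (and Iyama's) terminology, closure of $\mathcal{F}$ under submodules is part of the definition of a torsionfree class, hence automatic for every torsion theory; \emph{hereditary} means that the torsion class $\mathcal{T}$ is closed under submodules, equivalently that $\mathcal{F}$ is closed under injective envelopes. Thus in your first paragraph the step $\ker g\in\mathcal{F}$ is correct but uses only torsionfreeness, and hereditariness genuinely enters only in your second paragraph (where you do invoke it correctly, via injective envelopes, to get $\domdim A\geq 1$). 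The conflation becomes fatal in your last paragraph, where bijectivity is to rest on the claim that ``the Auslander condition on $A$ is equivalent to closure of $\mathcal{F}$ under submodules'': since submodule closure is automatic, this claim would say that \emph{every} faithful torsion theory with $\mathcal{F}$ representation-finite produces an Auslander regular algebra, and that is false. Take $B=KQ$ for $Q\colon 1\to 2\to 3$, let $\mathcal{F}=\add(B\oplus S(2))$ and $\mathcal{T}={}^{\perp}\mathcal{F}=\add(S(1)\oplus I(2))$: this torsion theory is faithful with $\mathcal{F}$ representation-finite, but it is not hereditary (as $S(2)\subseteq I(2)$ with $S(2)\notin\mathcal{T}$), and $A=\End_B(M)$ with $M=B\oplus S(2)$ has $\gldim A=2$ but fails the Auslander condition already at $I^0$: by projectivization, $\Hom_A\bigl(\Hom_B(M,S(2)),\Hom_B(M,X)\bigr)\cong\Hom_B(S(2),X)=0$ for every projective $B$-module $X$, and $\Hom_B(M,S(2))$ itself is not injective (it is $2$-dimensional, while the indecomposable injective $A$-modules $D\Hom_B(X,M)$ have dimensions $1,3,3,1$), so the projective module $\Hom_B(M,S(2))$ admits no monomorphism into a projective-injective $A$-module, whence $\pdim I^0\geq 1$. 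The dictionary a correct proof needs is therefore: torsionfreeness, faithfulness and representation-finiteness give $\gldim A\leq 2$, while \emph{hereditariness} is what corresponds to the Auslander condition.

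Beyond this, both genuinely hard steps are deferred rather than proved. In the forward direction, $\pdim I^1\leq 1$ is the essential content beyond $\gldim A\leq 2$ and $\domdim A\geq 1$, and your proposed route (``submodule-closure forces $\pdim I_X=\grade \soc I_X$'', via Theorem \ref{AusGorcharacterisation}) cannot work, by the same counterexample. In the backward direction, your reconstruction is circular: given only $A$, the category $\proj A$ is available merely as an additive category; the exact structure, the ambient abelian category $\mod B$, and hence the notions of ``submodule'' and ``Ext-projective object'' that you invoke are precisely the data to be constructed. (Your identification of the Ext-projectives of $\mathcal{F}$ with $\proj B$ is correct once $B$ is given --- faithfulness puts $\proj B$ inside $\mathcal{F}$, submodule closure gives $\Omega X\in\mathcal{F}$, and Ext-projectivity splits $0\to\Omega X\to P\to X\to 0$ --- but it cannot serve as a definition of $B$.) A complete proof must identify, from homological data of $A$ alone, which indecomposable projective $A$-modules play the role of $\Hom_B(M,B)$ (equivalently, an idempotent $e$ with $B\cong eAe$), and then verify that the two assignments are mutually inverse; your correct observation that the naive base-algebra guess fails for $A=K(1\to 2)$ shows you are aware of this difficulty, but the sketch contains no mechanism that resolves it.
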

In this correspondence, we have $A \cong \End_B(M)$ with $\mathcal{F}=\add M$. For more details of this correspondence and proofs we refer to \cite[Chapter 3]{Iyamaausreg2}. In a forthcoming work \cite{IKKM} this correspondence will be generalised to a correspondence for general 2-Gorenstein algebras, which has several applications such as a categorical version of Birkhoff's classical correspondence between posets and distributive lattices.

In \cite{Iyamaausreg2}, Iyama introduced the notion of \textit{right diagonal Auslander regular algebras}, defined as an Auslander regular algebra $A$ with minimal injective coresolution 
$$0 \rightarrow A \rightarrow I^0 \rightarrow I^1 \rightarrow \cdots \rightarrow I^n \rightarrow 0$$ 
such that every indecomposable direct summand $X$ of $I^i$ has projective dimension equal to $i$ for all $i \geq 0$.
Dually, one defines left diagonal Auslander regular algebras by looking at a minimal injective coresolution of the regular left module.
The next result gives a classification of right diagonal Auslander regular algebras of global dimension at most 2:

\begin{theorem} \label{diagausreg2}
\cite[Theorem 3.4]{Iyamaausreg2}
There is a bijective correspondence up to Morita equivalence between right diagonal Auslander regular algebras $A$ of global dimension at most 2 and algebras $B$ with only finitely many indecomposable modules with a projective socle.
\end{theorem}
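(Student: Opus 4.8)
The plan is to refine the classification of (not necessarily diagonal) Auslander regular algebras of global dimension at most two from the previous theorem, rather than to argue from scratch. So throughout I fix such an algebra $A$ together with the algebra $B$ and the faithful hereditary torsion theory $(\mathcal{T},\mathcal{F})$ with $\mathcal{F}=\add M$ representation-finite attached to it, so that $A\cong\End_B(M)$. The task then splits into showing that the right diagonal condition on $A$ is equivalent to $B$ having only finitely many indecomposable modules with projective socle, and that in the diagonal case the torsion theory is reconstructible from $B$ alone, so that the refined correspondence depends only on $B$.

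The first step is to reformulate right-diagonality homologically. Writing $0\to A\to I^0\to I^1\to I^2\to 0$ for the minimal injective coresolution of $A_A$ (of length at most $\gldim A\le 2$), the indecomposable injective $I(i)$ occurs in $I^j$ exactly when $\Ext_A^j(S_i,A)\neq 0$, its first occurrence is in degree $\grade S_i$, and by part (2) of Theorem \ref{AusGorcharacterisation} one has $\pdim I(i)=\grade\soc I(i)=\grade S_i$. Since $A$ is Auslander regular, every summand of $I^j$ has projective dimension at most $j$ automatically, so the diagonal condition forcing each summand of $I^j$ to have projective dimension exactly $j$ holds if and only if each indecomposable injective occurs in exactly one cohomological degree of the coresolution, i.e. if and only if for every simple $A$-module $S$ the graded space $\Ext_A^\bullet(S,A)$ is concentrated in a single degree. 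This is the working characterisation of right-diagonality I would use.

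The central step is to transport this concentration condition to $\mod B$. As $M$ is a generator, the indecomposable projectives are the $P_i=\Hom_B(M,M_i)$, the indecomposable injectives are the $I(i)=D\Hom_B(M_i,M)$, and $\Hom_A(\Hom_B(M,N),A)\cong\Hom_B(N,M)$. Using that the minimal projective $A$-resolution of $S_i$ is produced by iterated minimal right $\add M$-approximations of $M_i$ and of its syzygies, I can identify $\Ext_A^\bullet(S_i,A)$ with the cohomology of the complex obtained by applying $\Hom_B(-,M)$ to this approximation complex. Single-degree concentration then becomes a statement about the behaviour of $M_i$ under $\Hom_B(-,M)$, and I would show that the simples contributing to more than one degree — equivalently the injectives violating diagonality — are exactly those detecting indecomposable $B$-modules with projective socle. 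Concretely, I expect to produce a bijection between the indecomposable $B$-modules with projective socle and an intrinsic family of indecomposable $A$-modules built from the socle layers of $A_A$ via $\Hom_B(M,-)$, so that right-diagonality forces this family to be finite and conversely. For the reverse implication I would begin with an algebra $B$ having only finitely many indecomposable modules with projective socle, let $\mathcal{F}$ be the canonical torsionfree class determined by them, take $M$ to be its additive generator, verify faithfulness, heredity and representation-finiteness so that the preceding classification produces $A=\End_B(M)$, and then use the reformulation above to confirm that $A$ is right-diagonal.

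The main obstacle is precisely the identification in the third paragraph: proving that the cohomological overlap in $\Ext_A^\bullet(S,A)$ is governed exactly by the condition ``projective socle'' on $B$-modules, rather than by some neighbouring condition, requires a careful analysis of the radical and socle structure of $\End_B(M)$ and of how the hereditary torsion theory interacts with $\Hom_B(M,-)$. A secondary subtlety, which must be handled with care in the converse direction, is that the class of modules with projective socle is a hereditary torsionfree class but need not be faithful; hence the canonical torsion theory attached to $B$ in the diagonal case is not literally ``all modules with projective socle'', and pinning down its correct construction — so that Morita-equivalence classes of such $B$ match right diagonal $A$ bijectively — is part of the work.
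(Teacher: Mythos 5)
First, a point of order: the survey contains no proof of this statement --- it is quoted from \cite[Theorem 3.4]{Iyamaausreg2}, with the proof deferred to Chapter 3 of that paper --- so there is no in-paper argument to compare against, only the cited correspondence $A\cong\End_B(M)$ with $M$ the sum of \emph{all} indecomposable socle-projective $B$-modules. Measured against that, your scaffolding is reasonable: refining the preceding correspondence (\cite[Theorem 3.1]{Iyamaausreg2}) rather than arguing from scratch is surely the right route, your reformulation of right-diagonality (for every simple $S$, the spaces $\Ext_A^i(S,A)$ are concentrated in the single degree $\grade S$) is correct and cleanly justified via part (2) of Theorem \ref{AusGorcharacterisation}, and computing $\Ext_A^\bullet(S,A)$ by applying $\Hom_B(-,M)$ to $\add M$-approximation resolutions is the right computational tool.

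The genuine gap is that the entire content of the theorem is concentrated in the step you explicitly defer (``I would show\ldots'', ``I expect to produce\ldots''), and the equivalence you sketch there is aimed at the wrong target. For \emph{any} faithful hereditary torsion theory $(\mathcal{T},\mathcal{F})$ on $\mod B$, the class $\mathcal{F}$ automatically contains every module with projective socle: a simple projective module is a direct summand of $B\in\mathcal{F}$, hence lies in $\mathcal{F}$; hereditariness of $\mathcal{T}$ is equivalent to $\mathcal{F}$ being closed under injective envelopes, so $I(S)\in\mathcal{F}$ for every simple projective $S$; and closure under finite direct sums and submodules then captures every socle-projective module, since it embeds into a finite direct sum of such $I(S)$. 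Consequently, finiteness of the number of socle-projective indecomposables is \emph{automatic} from representation-finiteness of $\mathcal{F}$ and has nothing to do with diagonality; what right-diagonality must be shown equivalent to is the reverse inclusion, namely that every indecomposable direct summand of $M$ has projective socle, which is exactly what makes $(\mathcal{T},\mathcal{F})$ recoverable from $B$ alone. Your plan (``diagonality forces this family to be finite and conversely'') proves neither direction of this inclusion. A concrete test case showing the difference: $B=K[x]/(x^2)$ has no nonzero socle-projective modules, so trivially finitely many, yet $\mathcal{F}=\mod B$ is a faithful hereditary representation-finite torsionfree class strictly larger than the socle-projective modules, and the resulting algebra $\End_B(B\oplus S)$, the Auslander algebra of $B$, is Auslander regular of global dimension two but \emph{not} right diagonal: the middle term of its minimal injective coresolution is projective-injective, of projective dimension $0\neq 1$. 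This example also shows that your ``secondary subtlety'' about faithfulness is real, but your proposed fix runs backwards: since diagonality forces $\mathcal{F}$ to equal the socle-projective class, faithfulness forces $\soc B_B$ itself to be projective, so the correct resolution is a restriction on which algebras $B$ occur in the bijection (algebras like $K[x]/(x^2)$ simply do not arise), not a replacement of $\mathcal{F}$ by some other canonical class --- the latter would contradict the very description of the correspondence you are trying to prove.
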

In this correspondence, we have $A \cong \End_B(M)$ when $M$ is the direct sum of all indecomposable $B$-modules with a projective socle, we refer to \cite[Chapter 3]{Iyamaausreg2} for more details and proofs.
We present a simple example:
\begin{example}
Let $B=KQ$ with $Q$ the quiver
\[\begin{tikzcd}
	1 & 2 & 3
	\arrow[from=1-1, to=1-2]
	\arrow[from=1-3, to=1-2]
\end{tikzcd}\]
There are 6 indecomposable $B$-modules and four of them have a projective socle, namely those with dimension vectors $[ 1, 1, 1 ], [ 0, 1, 1 ], [ 1, 1, 0 ]$ and $[ 0, 1, 0 ]$. Let $M$ be the direct sum of these 4 indecomposable modules with a projective socle and $A = \End_B(M)$.
Then $A=KQ_2/I$ is given by quiver and relations as $Q_2=$
\[\begin{tikzcd}
	& 4 \\
	2 && 3 \\
	& 1
	\arrow["b", from=2-1, to=1-2]
	\arrow["d"', from=2-3, to=1-2]
	\arrow["a", from=3-2, to=2-1]
	\arrow["c"', from=3-2, to=2-3]
\end{tikzcd}\]
and $I=\langle ab-cd \rangle.$
\end{example}

Theorem \ref{diagausreg2} provides a method to produce a large class of right diagonal Auslander regular algebras. But in fact, for global dimension $>2$, no large class of right diagonal Auslander regular algebras is known except for incidence algebras of distributive lattices that we will discuss in more depth in a later section.
Motivated by this we pose the following problem:
\begin{problem}
Find new classes of right diagonal Auslander regular algebras.
\end{problem}
Finding new classes of right diagonal Auslander regular algebras is also important because of the following open problem due to Iyama, see \cite[Question 3.6.2]{Iyamaausreg2}.
\begin{problem}
If $A$ is right diagonal Auslander regular, is $A$ then also left diagonal Auslander regular?
\end{problem}

In this section, we discussed Auslander regular algebras of very small global dimension. It seems that it is not known whether Auslander regular algebras with a given number of simple modules can have arbitrarily large global dimension. We pose this as a problem:
\begin{problem}
Let $A$ be an Auslander regular algebra with $n$ simple modules. Is the global dimension of $A$ bounded by a polynomial function $f(n)$ depending only on $n$?
\end{problem}
We do not know if we can always choose $f(n)=2n$ for example.
Note that if we replace the Auslander regular condition by just having finite global dimension, then there are even algebras with only 2 simple modules and arbitrary large but finite global dimension, see for example \cite{Kiku}.

\section{Auslander-Gorenstein algebras among monomial algebras}

In this chapter, we focus on the Auslander-Gorenstein property within the class of monomial algebras. This has been studied in \cite{Kla}. A finite-dimensional quiver algebra $A=kQ/I$ is called \emph{monomial} if $I$ is generated by a set of paths, rather than by linear combinations of paths. 
This class includes well-known classes such as Nakayama algebras and gentle algebras, but is significantly more general. For example, there are no restrictions on the form a quiver $Q$ can take in a monomial algebra. In general, determining whether a quiver algebra is Auslander-Gorenstein is a  difficult task. However, the restriction on the ideal $I$ makes monomial algebras a more accessible class to study.

\begin{theorem}\cite[Corollary 4.2]{Kla}
    Every Auslander-Gorenstein monomial algebra is a string algebra. 
\end{theorem}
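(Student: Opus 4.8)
The plan is to suppose that $A=kQ/I$ is a monomial algebra that is Auslander-Gorenstein but \emph{not} a string algebra, and to derive a contradiction. Recall that a monomial algebra is a string algebra exactly when (i) at most two arrows start and at most two arrows end at each vertex, and (ii) for every arrow $\alpha$ there is at most one arrow $\beta$ with $\alpha\beta\notin I$ and at most one arrow $\gamma$ with $\gamma\alpha\notin I$ (the unique continuation property). So the failure of being a string algebra means that (i) or (ii) fails, and I must show that each such local defect is incompatible with the Auslander-Gorenstein property.

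The homological input I would use is Theorem \ref{AusGorcharacterisation}. Since $A$ is Auslander-Gorenstein it satisfies the Auslander condition, so by Theorem \ref{GorsymconjAG} it is Iwanaga-Gorenstein and the characterisations of Theorem \ref{AusGorcharacterisation} apply. Characterisation (2) gives $\pdim I(v)=\grade\soc I(v)=\grade S(v)$ for every vertex $v$, since $\soc I(v)=S(v)$; characterisation (4) says that every composition factor of $\Ext_A^i(S,A)$ has grade at least $i$, for all $i\ge0$ and all simple $S$. The combinatorial input is that for a monomial algebra $P(v)$ has a basis given by the nonzero paths starting at $v$, dually $I(v)$ is described by the nonzero paths ending at $v$, and the minimal projective resolution of a simple module is governed by chains of overlapping relations. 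This renders $\grade S(v)$, $\pdim I(v)$ and the composition factors of $\Ext_A^i(S,A)$ combinatorially computable, and these are the quantities I would play against characterisations (2) and (4).

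For the case analysis, suppose first that (i) fails, say three arrows $\alpha_1,\alpha_2,\alpha_3$ leave a vertex $v$; the dual situation of three arrows entering a vertex is reduced to this one by passing to $A^{\mathrm{op}}$ and invoking the left-right symmetry of the $k$-Gorenstein property. Then $\rad P(v)/\rad^2 P(v)$ has three summands, so the first syzygy of $S(v)$ splits into three uniserial strands, and I would track this branching through the minimal projective resolution to exhibit, at some degree $i$, a composition factor of $\Ext_A^i(S(v),A)$ of grade strictly less than $i$, contradicting characterisation (4); equivalently one can aim to contradict the equality $\pdim I(w)=\grade S(w)$ of (2) for a vertex $w$ occurring in $\rad P(v)$. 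If instead (ii) fails, an arrow admits two nonzero continuations, the uniseriality of the relevant syzygies breaks, and a more delicate analysis of the overlapping relations should again produce a composition factor of too-low grade.

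The main obstacle I anticipate is the continuation case (ii): there the defect is not visible as a branching of a radical layer, so locating the offending composition factor demands a precise description of the minimal projective and injective resolutions in terms of overlaps, and showing that the low-grade factor genuinely survives in $\Ext_A^i(S,A)$ rather than being cancelled elsewhere in the resolution. A further genuine difficulty is that the Auslander-Gorenstein property is not inherited by quotient algebras or by idempotent subalgebras, so I cannot simply pass to a small subquiver containing the bad configuration and argue locally; the witnessing computation must be carried out with the global relations of $A$ in view. Controlling this bookkeeping is where I expect the real work of the proof to lie.
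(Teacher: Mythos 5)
There is a genuine gap, and it is exactly where you locate it yourself: the central claim---that every failure of the string conditions forces, at some degree $i$, a composition factor of $\Ext_A^i(S,A)$ of grade strictly less than $i$---is never established, and no mechanism or degree bound is offered for why it must occur. Since the quiver and the monomial ideal are arbitrary, ``tracking the branching through the minimal projective resolution'' is unbounded bookkeeping with no termination criterion, and branching of syzygies is not in itself incompatible with characterisation (4) of Theorem \ref{AusGorcharacterisation}: the obstruction has to be located in how socles and injective envelopes sit inside the resolution, which your outline does not address. Both your case (i) and your case (ii) are therefore left open (you concede as much for (ii)), so what you have is a plausible research plan rather than a proof.

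It is also worth comparing with the route actually taken in \cite{Kla}, which is structural rather than obstruction-based and avoids precisely the difficulties above. There the statement is a corollary of a classification theorem, \cite[Theorem 4.1]{Kla}, which describes all \emph{2-Gorenstein} monomial algebras by quiver and relations: they are Nakayama algebras glued at vertices of in- and out-degree two in a prescribed way, and the string conditions are then read off from this description. Two features make that approach work where yours stalls. First, only the beginning $I^0, I^1$ of the minimal injective coresolution of $A$ is used (projectivity of $I^0$, i.e.\ dominant dimension at least one, together with $\pdim I^1 \leq 1$), so every computation takes place in bounded homological degree and the case analysis is finite; in particular only a small initial segment of the Auslander condition is consumed, whereas your argument invokes its full strength. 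Second, elementary facts special to monomial algebras---for instance that an indecomposable projective-injective module must be uniserial, since a simple socle forces a unique maximal nonzero path---convert these low-degree homological conditions into local conditions on the quiver, which is what produces the ``glued Nakayama'' picture and, with it, the string property. If you want to complete a proof along your lines, the realistic repair is to abandon the arbitrary-degree contradiction scheme and instead prove a structure statement from the 1- and 2-Gorenstein conditions alone, which is essentially to rediscover \cite[Theorem 4.1]{Kla}.
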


This result shows that being Auslander-Gorenstein is a very restrictive condition for monomial algebras. String algebras have well-understood module categories; their indecomposable modules can be described combinatorially in terms of strings and bands, see \cite{BR} and \cite{CB}. 

Gentle algebras form an important subclass of string algebras. They were introduced by Assem and Skowro\'nski in \cite{AS} to study iterated tilted algebras of type $\tilde{A}$, and have attracted significant attention since then. Remarkably, for this particular class, the Auslander-Gorenstein property can be expressed as a simple combinatorial condition on the underlying quiver.

\begin{theorem}\cite[Theorem 3.3]{Kla}
\label{thm::AGGentleClassification}
 Let $A=KQ/I$ be a gentle algebra. Then $A$ is Auslander-Gorenstein if and only if for every vertex $x$ of $Q$ $\deg^{in}(x)=2$ if and only if $\deg^{out}(x)=2.$ 
 \\
 Here $\deg^{in}(x)$ (resp. $\deg^{out}(x)$) denotes the in-degree (resp. out-degree) of a vertex $x$ in $Q.$ 
\end{theorem}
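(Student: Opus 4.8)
The starting point is that gentle algebras are Iwanaga--Gorenstein (Geiss--Reiten), so Theorem~\ref{AusGorcharacterisation} applies and it suffices to verify condition~(2): $A$ is Auslander--Gorenstein if and only if $\pdim I(x)=\grade\soc I(x)=\grade S(x)$ for every vertex $x$. I would reformulate this through the minimal injective coresolution $0\to A\to I^0\to I^1\to\cdots$. Since the multiplicity of the indecomposable injective $I(x)$ in $I^i$ is the Bass number $\dim\Ext_A^i(S(x),A)$, the first index at which $I(x)$ occurs equals $\grade S(x)$, and the condition $\pdim I^i\le i$ for all $i$ (the Auslander condition) is equivalent to $\pdim I(x)\le\grade S(x)$ for all $x$. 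Thus everything reduces to comparing, vertex by vertex, the integers $\pdim I(x)$ and $\grade S(x)$, both finite by the Gorenstein property.

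For the computation I would use that in a gentle algebra every $P(x)$ and every $I(x)$ is a string module: $P(x)$ has simple top $S(x)$ with at most $\deg^{out}(x)$ arms descending along the arrows out of $x$, and dually $I(x)$ has simple socle $S(x)$ with at most $\deg^{in}(x)$ arms ascending along the arrows into $x$. From this I extract the local facts that drive the proof. If $\deg^{in}(x)=2$, both ascending arms of $I(x)$ are nonempty, so $\top I(x)$ is a sum of two simples; hence $I(x)$ is not local, not projective, and $\pdim I(x)\ge 1$. If instead $\deg^{out}(x)=0$, then $P(x)=S(x)$ is projective, so $S(x)\subseteq\soc A$ and $\grade S(x)=0$; and if $\deg^{out}(x)=1$, the gentle axioms force exactly one arrow $\gamma$ into $x$ to satisfy $\gamma c\in I$ for the unique arrow $c$ out of $x$, whence $S(x)\subseteq\soc P(w)$ for the source $w$ of $\gamma$ and again $\grade S(x)=0$. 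The dual facts (with $\deg^{out}(x)=2$) hold by the symmetric socle argument.

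These local facts give the direction ``Auslander--Gorenstein $\Rightarrow$ degree condition'' by contraposition. Suppose the degree condition fails at some $x$. Since the Auslander--Gorenstein property is left--right symmetric and the degree condition is self-dual under $Q\mapsto Q^{op}$ (which swaps $\deg^{in}$ and $\deg^{out}$), I may pass to $A^{op}$ and assume $\deg^{in}(x)=2$ and $\deg^{out}(x)\le 1$. Then the facts above yield $\pdim I(x)\ge 1>0=\grade S(x)$, so condition~(2) fails and $A$ is not Auslander--Gorenstein. The remaining, harder direction is ``degree condition $\Rightarrow$ Auslander--Gorenstein'', for which I must establish $\pdim I(x)\le\grade S(x)$ for every $x$. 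Here I would compute both invariants as lengths of maximal chains of relations at $x$: the minimal projective resolution of the string module $I(x)$ is produced by the standard syzygy calculus, its length being governed by the relation chains running into $x$, while $\grade S(x)$ is governed by the relation chains running out of $x$. The degree condition is exactly what balances the two sides, since the gentle axioms pair each incoming relation-arm at a vertex with an outgoing one of the same length, so an induction on syzygies (equivalently, along relation chains) propagates the desired inequality through the whole quiver.

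The main obstacle is this last induction. The difficulty is that neither $\pdim I(x)$ nor $\grade S(x)$ is a purely local quantity: each depends on relation chains that travel through other vertices, so the matching cannot be carried out at $x$ alone but only globally, under the hypothesis that \emph{every} vertex satisfies the degree condition. The key point to nail down is that, step by step along a syzygy, a branching caused by a degree-$2$ vertex on the injective-resolution side is always compensated by a branching at a degree-$2$ vertex on the grade side, so that the two resolutions reach the projective (respectively the socle of $A$) after the same number of steps. This is where the gentle axioms and the global degree condition must be used together, and it is the part demanding careful combinatorial bookkeeping rather than a single clean identity.
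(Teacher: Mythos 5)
Your setup and your ``only if'' direction are correct and essentially complete. Gentle algebras are Iwanaga--Gorenstein (Geiss--Reiten), so Theorem~\ref{AusGorcharacterisation} applies, and the Bass-number argument showing that the Auslander condition is equivalent to $\pdim I(x)\le\grade S(x)$ for every vertex $x$ is sound. The three local facts are also right: if $\deg^{in}(x)=2$ then $\top I(x)$ is a sum of two simples, so $I(x)$ is not local and hence not projective; if $\deg^{out}(x)=0$ then $S(x)=P(x)$ embeds in $A$; and if $\deg^{out}(x)=1$ with $\deg^{in}(x)=2$, the gentle axioms force exactly one incoming arrow $\gamma$ with $\gamma c\in I$, so $K\gamma\cong S(x)$ sits in the socle of a projective and $\grade S(x)=0$. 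Together with the left-right symmetry of the Auslander-Gorenstein property this gives necessity of the degree condition by contraposition.

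The genuine gap is the ``if'' direction, and your final paragraph concedes it: the induction that should establish $\pdim I(x)\le\grade S(x)$ for all $x$ under the global degree condition is never carried out, and the one claim offered in its place --- that ``the gentle axioms pair each incoming relation-arm at a vertex with an outgoing one of the same length'' --- is unjustified and, in any reading that would make the induction work, false: the gentle axioms impose no constraint whatsoever on lengths of critical paths. Moreover, your attribution of the two invariants to combinatorial data is backwards. Take $Q\colon 1\xrightarrow{a}2\xrightarrow{b}3$ with $I=\langle ab\rangle$ (the degree condition holds vacuously): here $\pdim I(1)=2$ although no relation chain runs \emph{into} vertex $1$, while $\pdim I(3)=0$ although a critical path of length $2$ runs into vertex $3$. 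In fact the minimal projective resolution of $I(x)$ starts at the projective cover determined by the maximal \emph{nonzero} paths ending at $x$ and then follows critical chains \emph{forward} via the syzygy rule $\Omega(\gamma A)=\delta A$ for $\gamma\delta\in I$; and $\grade S(x)$ is not computed by any syzygy calculus at all --- one must prove vanishing $\Ext^i_A(S(x),A)=0$ for all $i<\pdim I(x)$, i.e.\ exactness of $\Hom_A(P_\bullet,A)$ in low degrees for a minimal projective resolution $P_\bullet$ of $S(x)$, which is a genuinely different computation in which maximal nonzero paths and maximal critical paths interact globally. This interplay is precisely what is visible in Theorem~\ref{thm::ARpermGentle} (for instance, in case (ii) the degree condition forces $I(v)\cong P(s(p))$, and in case (iii) both invariants equal the length of the maximal critical path starting at $v$), and establishing those identities is the actual content of \cite[Theorem 3.3]{Kla}. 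As written, your argument proves only the easier half of the theorem.
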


An explicit description of 2-Gorenstein monomial algebras in terms of quivers and relations is given in \cite[Theorem 4.1]{Kla}. 
According to this result, every 2-Gorenstein monomial algebra can be viewed as a collection of Nakayama algebras glued together at degree 4 vertices in a specific way. 
This insight suggests a natural approach: to "cut" the algebra at its degree 4 vertices, effectively reducing it to a disjoint union of Nakayama algebras. Surprisingly, this reduction process does not affect many of the relevant homological properties of the original algebra, leading to the following result.

\begin{theorem}\cite[Corollary 5.2]{Kla}
      We can reduce the problem of classifying all Auslander-Gorenstein monomial algebras to classifying all Auslander-Gorenstein \newline Nakayama algebras.
\end{theorem}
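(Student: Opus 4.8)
The plan is to make precise the ``cut at degree-4 vertices'' operation and show that it is compatible with the Auslander-Gorenstein property, so that classifying Auslander-Gorenstein monomial algebras amounts to classifying those of Nakayama type together with the combinatorial gluing data. By the earlier classification of $2$-Gorenstein monomial algebras (the cited \cite[Theorem 4.1]{Kla}), any Auslander-Gorenstein monomial algebra $A=KQ/I$ is, in particular, $2$-Gorenstein, hence a string algebra, and its quiver $Q$ has all vertices of total degree at most $4$; moreover the local structure at a degree-$4$ vertex is the prescribed one (two arrows in, two out, with the monomial relations forcing a Nakayama-type behaviour on each branch). The first step is therefore to define the cutting operation formally: at each degree-$4$ vertex $x$, replace $x$ by several copies, one for each admissible in-out pairing of arrows at $x$ surviving the relations, thereby severing the vertex so that the resulting quiver $\tilde Q$ has all vertices of degree at most $3$ along the connecting walks and decomposes $A$ into a disjoint union of Nakayama pieces $A = A_1 \sqcup \cdots \sqcup A_r$ (after the cut).

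Next I would establish that the cut does not change the relevant homological data. Concretely, the indecomposable projective, injective, and simple modules of the cut algebra correspond to those of the original (away from the cut vertices they are literally equal, and at a cut vertex the summands of the minimal injective coresolution split according to the branches, since the relations are monomial and the string-module combinatorics localises). The key homological claim is that for every vertex $i$, the projective dimension of the indecomposable injective $I(i)$ and the grade of $\soc I(i)$ are computed branch-by-branch and are unaffected by the cut. This is where one invokes part (2) of Theorem \ref{AusGorcharacterisation}: since $A$ is Iwanaga-Gorenstein (being $2$-Gorenstein monomial forces finite injective dimension on both sides), $A$ is Auslander-Gorenstein if and only if $\pdim I = \grade \soc I$ for every indecomposable injective $I$. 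The plan is to verify that both quantities are ``local'' invariants of the string walks through each vertex, so that the equality $\pdim I(i) = \grade \soc I(i)$ holds in $A$ exactly when it holds in the corresponding Nakayama piece $A_j$ containing the relevant walk.

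The main obstacle will be showing that the syzygies and the minimal injective coresolutions genuinely localise at the degree-$4$ vertices, i.e. that gluing two Nakayama algebras at such a vertex neither creates new nonzero $\Ext$ groups that would raise the grade of a socle, nor collapses the projective dimension of an injective. One must check that the two incoming (resp. outgoing) branches at a degree-$4$ vertex interact only through the prescribed monomial relation, so that a minimal projective resolution of an injective module, once it reaches the cut vertex, continues along exactly one branch and never ``feels'' the other --- this is precisely the content of the string-algebra structure combined with the explicit form of the relations from \cite[Theorem 4.1]{Kla}. Once this locality is established, the equivalence ``$A$ Auslander-Gorenstein $\iff$ every Nakayama piece $A_j$ is Auslander-Gorenstein'' follows by applying the characterisation vertexwise, and the reduction is complete.
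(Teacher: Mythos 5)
Your overall strategy is the one the paper follows: use the structural description of $2$-Gorenstein monomial algebras from \cite[Theorem 4.1]{Kla} to view the algebra as Nakayama pieces glued at degree-$4$ vertices, cut at those vertices, and show that the homological data governing the Auslander--Gorenstein property is computed piece-by-piece. You also correctly identify the locality of minimal resolutions across a degree-$4$ vertex as the real technical content --- this is exactly what the survey flags as the ``surprising'' part of the reduction --- though in your proposal it remains an outline rather than an argument.

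There is, however, one step that fails as written: the parenthetical claim that ``being $2$-Gorenstein monomial forces finite injective dimension on both sides.'' Being $2$-Gorenstein is a condition only on the terms $I^0,I^1$ of the minimal injective coresolution and says nothing about the coresolution terminating; deducing Iwanaga--Gorensteinness from a Gorenstein-type hypothesis is precisely the hard point here. Indeed, the survey quotes \cite[Corollary 7.10]{Kla}: for a monomial algebra with $n$ simple modules one needs the $(4n-2)$-Gorenstein condition to conclude $\idim A_A = \idim {}_AA < \infty$, and this is presented as a substantial strengthening of the Auslander--Reiten conjecture for monomial algebras. If $2$-Gorensteinness alone implied Iwanaga--Gorensteinness, that bound growing with $n$ would be superfluous; your claim is therefore unjustified and almost certainly false. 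The gap bites in the direction ``all Nakayama pieces Auslander--Gorenstein $\Rightarrow$ $A$ Auslander--Gorenstein'': part (2) of Theorem \ref{AusGorcharacterisation} may only be invoked once $A$ is known to be Iwanaga--Gorenstein, and you cannot get that from $2$-Gorensteinness. The repair stays inside your own framework, but must be made explicit: the pieces are Iwanaga--Gorenstein (being Auslander--Gorenstein), and the same locality of minimal injective coresolutions that you need anyway shows that $\idim A_A$ and $\idim {}_AA$ equal the maxima of the corresponding finite dimensions over the pieces; only after that transfer is established may the characterisation $\pdim I = \grade \soc I$ be applied vertexwise in both directions.
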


This reduction is significant because Nakayama algebras are far more restrictive than general monomial algebras. 
Indeed, a quiver algebra $KQ/I$ is  \textit{Nakayama} if and only if $Q$ is either a linearly oriented line or a linearly oriented cycle. 
Every indecomposable module over a Nakayama algebra is uniserial, which often enables explicit calculations and makes these algebras among the most accessible test classes for studying representation-theoretic questions. 
These algebras are closely related to Dyck paths, so they carry a rich combinatorial structure. 
Using this connection to combinatorics, Rubey and Stump characterised all 2-Gorenstein linear Nakayama algebras in \cite{RS} as those corresponding to Dyck paths containing no double deficiencies. Moreover, they proved that these algebras are enumerated by Motzkin paths, thereby proving a conjecture of Marczinzik.
In \cite{STZ}, Sen, Todorov, and Zhu investigate certain subclasses of Auslander-Gorenstein Nakayama algebras and provide a classification of cyclic Nakayama algebras that are minimal Auslander-Gorenstein, as well as dominant Auslander regular Nakayama algebras of global dimension three. 
Despite the relatively simple structure of Nakayama algebras, a full classification of Auslander-Gorenstein Nakayama algebras remains unknown to this day. 

One of the most significant results regarding the Auslander-Gorenstein property of monomial algebras is the fact that Conjecture \ref{conj::AGiffARbijwelldef} holds for every monomial algebra. This provides good evidence for the conjecture and yields a new homological characterisation of the Auslander-Gorenstein property in this setting.

\begin{theorem}\cite[Theorem 7.9]{Kla}
    Let $A=kQ/I$ be a monomial algebra. Then $A$ is Auslander-Gorenstein if and only if $A$ has a well-defined Auslander-Reiten map that is a bijection. 
\end{theorem}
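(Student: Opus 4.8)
The plan is to prove the two implications separately. The forward implication holds for arbitrary algebras and is an immediate consequence of the Auslander--Reiten bijection, so no monomial hypothesis is needed there; the backward implication is the substantive one, and it is where the combinatorics of monomial algebras must be exploited. For the forward direction, suppose $A$ is Auslander--Gorenstein. The Auslander--Reiten bijection (Proposition 5.4 of \cite{AR}, recalled above) says that $\psi\colon I\mapsto \Omega^{\pdim I}(I)$ is a bijection from indecomposable injectives to indecomposable projectives with $\pdim I<\infty$. Since $\Omega^{\pdim I}(I)$ is projective, the minimal projective resolution of $I$ has length $\pdim I$ and its last non-zero term is the indecomposable projective $\Omega^{\pdim I}(I)$. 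Hence the Auslander--Reiten map $\zeta$ is well defined and agrees with $\psi$ on isomorphism classes, so it is a bijection. This is exactly the ``easy half'' of Conjecture \ref{conj::AGiffARbijwelldef}.

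For the backward direction, assume $A=kQ/I$ is monomial and that $\zeta$ is a well-defined bijection. I would show that $A$ satisfies the Auslander condition and that $\idim {}_AA<\infty$; Gorenstein symmetry (Theorem \ref{GorsymconjAG}) then upgrades the latter to $\idim A_A<\infty$, so that $A$ is Iwanaga--Gorenstein, and together with the Auslander condition this is precisely the Auslander--Gorenstein property. The finiteness $\idim {}_AA<\infty$ is immediate from well-definedness: it gives $\pdim_A I(i)<\infty$ for every $i$, and $\idim {}_AA=\max_i \pdim_A I(i)$ since the right injective cogenerator is the direct sum of the $I(i)$. The heart of the matter is the Auslander condition, which I would reformulate before attacking it. Writing the minimal injective coresolution of $A$ as $0\to A\to I^0\to I^1\to\cdots$, the grade $\grade S(i)=\inf\{j\mid \Ext_A^j(S(i),A)\neq 0\}$ equals the least $\ell$ for which $I(i)$ occurs as a summand of $I^\ell$ (because minimality kills the differentials on socles). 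Consequently the Auslander condition ``$\pdim I^\ell\le\ell$ for all $\ell$'' is equivalent to the family of inequalities $\pdim I(i)\le \grade S(i)$ for all $i$, the binding constraint on each $I(i)$ being its first appearance. Note that this reformulation is valid without assuming Iwanaga--Gorensteinness, which is what keeps the argument non-circular.

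The combinatorial core is then to prove $\pdim I(i)\le \grade S(i)$ for all $i$ using the monomial (indeed string) structure together with the bijectivity of $\zeta$. Over a monomial algebra $I(i)$ is a string module whose minimal projective resolution---in particular its length $\pdim I(i)$ and its terminal summand $P(\zeta(i))$---is governed by the overlaps of the generating paths of $I$, and $\grade S(i)$ is likewise read off combinatorially from the way relations produce occurrences of $I(i)$ in the injective coresolution of $A$. Here I would use bijectivity of $\zeta$ rather than mere well-definedness: injectivity means that the terminal projectives $P(\zeta(i))$ of the resolutions of the distinct injectives are pairwise non-isomorphic, hence (being $n$ of them) exhaust all indecomposable projectives, and it is this rigid one-to-one matching between injectives and projectives that should force each $\pdim I(i)$ down to the corresponding grade. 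Once the Auslander condition is established and Theorem \ref{GorsymconjAG} has supplied Iwanaga--Gorensteinness, the inequalities in fact become equalities $\pdim I(i)=\grade \soc I(i)$, recovering characterisation (2) of Theorem \ref{AusGorcharacterisation} as a sanity check.

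I expect the main obstacle to be precisely this combinatorial core. The hypothesis only controls the \emph{endpoints} of the projective resolutions of the injectives---their projective dimensions and their terminal indecomposable summands---whereas the Auslander condition concerns the \emph{entire} minimal injective coresolution of $A$. Bridging the two amounts to showing, by induction on homological degree and by tracking how the syzygies of the string modules $I(i)$ decompose, that whenever $I(i)$ appears in $I^\ell$ one has $\pdim I(i)\le\ell$, with the bijectivity of $\zeta$ forcing this at the first appearance $\ell=\grade S(i)$. Controlling these syzygy decompositions uniformly across all vertices---rather than vertex by vertex---is the delicate step, and it is exactly there that the monomial restriction on $I$ (equivalently, the string description of the indecomposables) does the decisive work that is unavailable for a general quiver algebra.
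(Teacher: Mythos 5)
Your forward implication is complete and correct: for any Auslander--Gorenstein algebra, the Auslander--Reiten bijection of \cite[Proposition 5.4]{AR} immediately yields a well-defined Auslander--Reiten map that is a bijection, and indeed no monomial hypothesis enters. Your reduction of the backward implication is also sound and non-circular: well-definedness gives $\idim {}_AA=\max_i\pdim I(i)<\infty$, the Auslander condition is equivalent to the family of inequalities $\pdim I(i)\le\grade S(i)$ (the binding constraint being the first occurrence of $I(i)$ in the minimal injective coresolution of $A$), and Theorem \ref{GorsymconjAG} then upgrades this to Auslander--Gorensteinness. The problem is that your argument stops exactly where the theorem begins. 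The inequalities $\pdim I(i)\le\grade S(i)$ --- the only place where the hypotheses ``monomial'' and ``$\zeta$ bijective'' are actually used --- are never established; what you offer instead is a declaration of intent (``I would use bijectivity\dots'', ``should force\dots'') together with a correct diagnosis that this is the hard part. A correct reduction plus an acknowledged obstacle is not a proof of the theorem.

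Moreover, the one concrete ingredient in your sketch is false as stated: over a general monomial algebra the indecomposable injectives are \emph{not} string modules, and there is no string combinatorics available for their syzygies; string modules only make sense over string (special biserial) algebras, where every vertex has in- and out-degree at most two, whereas a monomial algebra puts no restriction on the quiver. In \cite{Kla}, the fact that the relevant algebras are string algebras is itself a theorem (every Auslander--Gorenstein monomial algebra is a string algebra, via an explicit quiver-and-relations description of the $2$-Gorenstein monomial algebras), and in your backward direction neither Auslander--Gorensteinness nor even $2$-Gorensteinness is available as a hypothesis, so you cannot presume this structure --- you would first have to prove that a well-defined bijective Auslander--Reiten map forces it. The source \cite{Kla} proves the theorem only after developing exactly this machinery: the classification of $2$-Gorenstein monomial algebras as Nakayama algebras glued at degree-four vertices and the reduction of the whole problem to Nakayama algebras, through which the behaviour of the Auslander--Reiten map can be controlled. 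It is that machinery, or a genuine substitute for it, that your proposal would need in order to close the gap.
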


Using the classification result of Auslander-Gorenstein gentle algebras, one can describe the Auslander-Reiten permutation explicitly. For this, we recall that in a gentle algebra $A=kQ/I$, a  path $w=\beta_1\ldots\beta_m$ with $\beta_i\in Q_1$ is called \textit{critical} if $\beta_i\beta_{i+1}\in I$ for every $1 \le i\le m-1.$ 

\begin{theorem}\cite[Corollary 3.11]{Kla}
\label{thm::ARpermGentle}
    Let $A=kQ/I$ be an Auslander-Gorenstein gentle algebra, and $\hat{\psi}$ its Auslander-Reiten bijection. Let $v\in Q_0.$ We can describe $\hat{\psi}$ explicitly as follows: 
    \begin{enumerate}[\rm (i)]
        \item If $\deg(v)=4$ or $0$ then $\hat{\psi}(v)=v.$

\vspace{5 pt}
{

    \resizebox{0.6\linewidth}{!}{
    \begin{tikzpicture}[font=\Large, line width=0.8 pt]

    \node (BA) at (-0.5, -0.5) {};
    \node (CA) at (0.5, -0.5) {};
    \node (DA) at (-0.5, 0.5) {};
    \node (EA) at (0.5, 0.5) {};

    \draw[-,red, bend right, line width=1pt] (CA) to (EA);
    \draw[-,red, bend right, line width=1pt] (DA) to (BA);

    \node (A) at (0, 0) {$v$};
    \node (B) at (-1, -1) {};
    \node (C) at (1, -1) {};
    \node (D) at (-1, 1) {};
    \node (E) at (1, 1) {};

    \draw[->] (A) to (B){};
    \draw[->] (A) to (C){};
    \draw[->] (E) to (A){};
    \draw[->] (D) to (A){};

    \node (I) at (1.6, 0) {or};
    
    \node (F) at (3.2, 0.2) {$v$};
    \filldraw (3,0) circle (1.5pt);

    \node (I2) at (4.5, 0) {then};

    \node (I2) at (6.5, 0) {$v$};
    \node (J) at (7.2, 0.5) {\textcolor{blue}{$\hat{\psi}$}};
    
    \draw[|->, loop, blue, line width=0.8pt]  (I2) to (I2) {};

\end{tikzpicture}
    }

}

\item If $\deg^{\text{in}}(v)=1$ and there is no arrow $\beta$ such that $\alpha\beta \notin I$ where $\alpha$ is the unique arrow ending at $v$, then $\hat{\psi}(v)=s(p)$ where $p$ is the unique maximal path ending at $v$ that is not contained in $I$.

\vspace{5 pt}
{

    \resizebox{0.8\linewidth}{!}{
    \begin{tikzpicture}[font=\Large, line width=0.8pt]
    
    \node (A) at (0, 0) {$v$};
    \node (B) at (0, -1) {};
    \node (D) at (0, 1) {};
    \node (BA) at (-0.1, -0.5) {};
    \node (AD) at (-0.1, 0.5) {};

    \draw[-,red, bend right, line width=1pt] (AD) to (BA);

    \draw[->] (A) to (B){};
    \draw[->] (D) to (A){};

    \node (I) at (1.5, 0) {or};

    \node (A2) at (3, 0) {$v$};
    \node (B2) at (3, 1) {};

    \draw[->] (B2) to (A2){};

    \node (I2) at (4.5, 0) {then};

    \node (K) at (6, 0) {};
    \node (L) at (7.2, 0) {{$s(p)$}};
    \node (M) at (8.4, 0) {};
    \node (N) at (9.4, 0) {};
    \node (O) at (10.4, 0) {$v$};
    \node (KL) at (6.3, 0.05) {};
    \node (LM) at (8, 0.05) {};

    \draw[->, dashed] (K) to (L){};
    \draw[->] (L) to (M){};
    \draw[->] (M) to (N){};
    \draw[->] (N) to (O){};

    \draw[-,red, bend left, line width=1pt] (KL) to (LM);

    \draw[|->,blue, bend left, line width=1pt] (O) to (L) node[midway,xshift=9.5cm,yshift=-0.8cm ] {$\hat{\psi}$};

\end{tikzpicture}
    }

}

\item If $\deg^{\text{in}}(v)=0$ and $\deg^{\text{out}}(v)=1$, or if $\deg^{\text{in}}(v)=1=\deg^{\text{out}}(v)$ and $\alpha\beta\notin I$ for the unique arrow $\alpha$  ending at $v$ and the unique arrow $\beta$ starting at $v$, then $\hat{\psi}(v)=t(w)$ where $w$ is the unique maximal critical path starting at $v$. 

{

    \resizebox{0.8\linewidth}{!}{
    \begin{tikzpicture}[font=\Large, line width=0.8 pt]
    
    \node (A) at (0, 0) {$v$};
    \node (B) at (0, -1) {};
    \node (D) at (0, 1) {};
    \node (BA) at (-0.1, -0.5) {};
    \node (AD) at (-0.1, 0.5) {};

    \draw[->] (A) to (B){};
    \draw[->] (D) to (A){};

    \node (I) at (1.5, 0) {or};

    \node (A2) at (3, 0) {$v$};
    \node (B2) at (3, -1) {};

    \draw[->] (A2) to (B2){};

    \node (I2) at (4.5, 0) {then};

    \node (K) at (6, 0) {$v$};
    \node (L) at (7, 0) {};
    \node (M) at (8, 0) {};
    \node (N) at (9.2, 0) {{$t(w)$}};
    \node (O) at (10.4, 0) {};
    \node (KL) at (6.5, 0.05) {};
    \node (LM) at (7.5, 0.05) {};
    \node (MN) at (8.5, 0.05) {};
    \node (NO) at (9.7, 0.05) {};

    \draw[->] (K) to (L){};
    \draw[->] (L) to (M){};
    \draw[->] (M) to (N){};
    \draw[->, dashed] (N) to (O){};

    \draw[-,red, bend left, line width=1pt] (KL) to (LM);
    \draw[-,red, bend left, line width=1pt] (LM) to (MN);

    \draw[|->,blue, bend right, line width=1pt] (K) to (N) node[midway,xshift=7.5cm,yshift=-0.8cm ] {$\hat{\psi}$};
    
\end{tikzpicture}
    }

}

\end{enumerate}

\end{theorem}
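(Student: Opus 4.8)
The plan is to compute $\hat\psi$ through the Auslander--Reiten map $\zeta$. Since $A$ is a monomial Auslander--Gorenstein algebra it has a well-defined Auslander--Reiten map, and for such algebras the last nonzero term $P(\zeta(v))$ of the minimal projective resolution of the injective $I(v)$ is exactly $\Omega^{d}(I(v))$ with $d=\pdim I(v)$, which by the Auslander--Reiten bijection is $P(\hat\psi(v))$; hence $\zeta=\hat\psi$ and it suffices to identify the terminal indecomposable projective in the minimal projective resolution of each $I(v)$. The first step is to describe $I(v)$ as a string module. Since $I(v)=D(e_vA^{\mathrm{op}})$, its shape is governed by the maximal paths in $Q$ ending at $v$, of which there are exactly $\deg^{\mathrm{in}}(v)$: we get $I(v)=S(v)$ when $\deg^{\mathrm{in}}(v)=0$, the uniserial string module $M(p^{-1})$ on the unique maximal path $p$ ending at $v$ when $\deg^{\mathrm{in}}(v)=1$, and a two-armed string module with valley $S(v)$ and the two maximal paths as arms when $\deg^{\mathrm{in}}(v)=2$.

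Next I would set up the combinatorial rule for minimal projective resolutions of string modules over a gentle algebra: syzygies of string modules are again direct sums of string modules, computed by a peak resolution that replaces each top $S(u)$ by the relevant complement inside $\rad P(u)$ and reads off the new string from the gentle relations at the turning points. The essential bookkeeping feature is that a continuation through a vertex survives according to whether the corresponding length-two composition lies in $I$: a non-relation $\alpha\beta\notin I$ propagates the string forward, whereas a relation $\alpha\beta\in I$ forces the resolution to turn the corner. Iterating this rule, the string traced out by the successive syzygies is governed precisely by consecutive relations, that is, by critical paths; the process terminates because $A$ is Iwanaga--Gorenstein, so $I(v)$ has finite projective dimension, and it terminates at a single indecomposable projective by well-definedness of the Auslander--Reiten map.

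With this rule in hand the three cases become a matter of following the trace of the resolution. In case (iii) the vertex $v$ admits a forward non-relation exit (either $v$ is a source with a single outgoing arrow, or $\alpha\beta\notin I$ for the unique incoming $\alpha$ and outgoing $\beta$); resolving $I(v)$ then moves forward and, once it reaches the relations, walks along the unique maximal critical path $w$ starting at $v$, terminating at $P(t(w))$. In case (ii) there is no forward non-relation exit ($v$ is a sink, or $\alpha\beta\in I$), so the resolution instead travels backwards along the maximal non-relation path $p$ ending at $v$ and terminates at $P(s(p))$; in the simplest situations $I(v)=M(p^{-1})$ is already isomorphic to $P(s(p))$. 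Finally, in case (i) the vertex of degree $0$ gives $I(v)=S(v)=P(v)$ immediately, while for a vertex of degree $4$ the two backward arms of $I(v)$ re-converge through the two non-relation continuations $\alpha\beta\notin I$ at $v$, so that the resolution passes through $v$ and terminates at $P(v)$ — when the neighbouring sources are adjacent to $v$ this already happens at the first syzygy $\Omega(I(v))=P(v)$. Hence $\hat\psi(v)=v$.

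The main obstacle I expect is the precise syzygy bookkeeping through the degree-$4$ branching vertices, together with a clean proof of termination at exactly the claimed projective. Controlling how the forward and backward strings interact at a degree-$4$ vertex — showing that they re-converge to a single peak at $v$ in case (i), and that passing through an intermediate degree-$4$ vertex in cases (ii) and (iii) does not derail the trace of the critical, respectively non-relation, path — is the genuinely delicate point; the whole distinction between the forward-critical behaviour of case (iii) and the backward-direct behaviour of case (ii), keyed entirely on whether the local composition $\alpha\beta$ lies in $I$, must be tracked consistently throughout. Finiteness of the resolution comes for free from the Iwanaga--Gorenstein property and indecomposability of the terminal term from the well-definedness of the Auslander--Reiten map, so these cost nothing extra.
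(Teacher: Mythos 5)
Your overall strategy---identify $\hat{\psi}(v)$ with the last term of the minimal projective resolution of $I(v)$, describe $I(v)$ as a string module assembled from the maximal non-relation paths ending at $v$, and track syzygies through the gentle relations---is the right one, and it is essentially the route behind the cited result (the survey itself gives no proof; in \cite{Kla} the statement is extracted from exactly this kind of resolution combinatorics used to prove the classification Theorem 3.3). Your conclusions in all three cases are correct. The problem is that the proposal postpones precisely the steps that constitute the proof, and the mechanism you give for case (ii) would fail if taken literally. You write that in case (ii) ``the resolution travels backwards along the maximal non-relation path $p$ and terminates at $P(s(p))$,'' with $I(v)\cong P(s(p))$ only ``in the simplest situations.'' Syzygies over a monomial algebra never move backwards: every syzygy of a string module is generated by \emph{forward} continuations through relations, so if $I(v)$ in case (ii) were not already projective, its resolution would run forward along critical paths and terminate at a vertex having nothing to do with $s(p)$. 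What one must prove is that in case (ii) the projective cover map $P(s(p))\rightarrow I(v)$ is always an isomorphism, i.e. $I(v)$ is projective-injective and $\pdim I(v)=0$. This rests on two structural facts absent from your argument: (a) gentleness forces $\deg^{in}(s(p))\leq 1$ (two arrows into $s(p)$ would both have to form relations with the first arrow of $p$, violating the gentle condition), and then the Auslander-Gorenstein degree condition of Theorem \ref{thm::AGGentleClassification} forces $\deg^{out}(s(p))=1$, so $P(s(p))$ is uniserial; (b) the maximal non-relation path starting at $s(p)$ is exactly $p$, because the case (ii) hypothesis forbids any forward non-relation continuation at $v$.

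The same gap recurs at the degree-$4$ vertices, which you flag as ``the genuinely delicate point'' but do not resolve, and your hedge ``when the neighbouring sources are adjacent to $v$ this already happens at the first syzygy'' misplaces the difficulty: one should prove that $\Omega^{1}(I(v))\cong P(v)$ for \emph{every} degree-$4$ vertex, so $\pdim I(v)=1$ always. This follows from the arrow pairing at $v$ forced by gentleness (each incoming arrow $\alpha_i$ has exactly one outgoing $\beta_j$ with $\alpha_i\beta_j\notin I$ and exactly one with $\alpha_i\beta_j\in I$) together with facts (a) and (b) applied to the two arms $c,d$ of $I(v)$: these show $P(s(c))$ and $P(s(d))$ are uniserial and continue through $v$ along the two different outgoing arrows, whence the kernel of $P(s(c))\oplus P(s(d))\rightarrow I(v)$ is generated by the single diagonal element $(c,-d)$ concentrated at $v$ and a dimension count identifies it with $P(v)$. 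Once these two lemmas are in place, your case (iii) analysis (the resolution walks the maximal critical path and becomes projective exactly when the continuation is a non-relation or absent) is correct and closes the argument, with finiteness free from the Gorenstein hypothesis as you say. In short: right approach, right answers, but the two computations you defer \emph{are} the proof, and the backward-travel heuristic for case (ii) is not a mechanism that exists.
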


We include Example 3.12 from \cite{Kla} here. 

\begin{example}\cite[Example 3.12]{Kla}
    Let $Q$ be the following quiver: 
    
        \vspace{10pt}
    {
    \centering

    \resizebox{0.3\linewidth}{!}{
    \begin{tikzpicture}[font=\Large, line width=0.8pt]

    \node (O) at (11, 0) {1};
    \node (P) at (11, 2) {2};
    \node (Q) at (12, 1) {3};
    \node (R) at (13, 2) {4};
    \node (S1) at (12.9, 0.1) {};
    \node (S) at (13, 0) {5};
    \node (S2) at (13.1, -0.1) {};
    \node (T1) at (11.9, -0.9) {};
    \node (T) at (12, -1) {6};
    \node (T2) at (12.1, -1.1) {};
    \node (U) at (11, -2) {7};
    \node (V) at (14, 1) {8};
    \node (X) at (15, 2) {9};

    \draw[->, line width=0.8pt, orange] (Q) to (P){};
    \draw[blue, ->, line width=0.8pt] (R) to (Q){};
    \draw[->, line width=0.8pt, blue] (Q) to (O){};
    \draw[->, line width=0.8pt, orange] (S) to (Q){};
    \draw[->, line width=0.8pt, orange] (O) to (T){};
    \draw[->, blue, line width=0.8pt] (U) to (T){};
    \draw[->, blue, line width=0.8pt] (T1) to (S1){};
    \draw[->, orange, line width=0.8pt] (T2) to (S2){};
    \draw[->, blue, line width=0.8pt] (S) to (V){};
    \draw[->, blue, line width=0.8pt] (V) to (X){};

\end{tikzpicture}

    }

}

\vspace{10 pt}
    
    In this example, the arrows are either blue or orange, and a path of length two is contained in $I$ if and only if it contains two arrows of different colours. This defines a gentle algebra $A$. Searching for a maximal path ending at a vertex (resp. a maximal critical path starting at a vertex) translates to looking for a maximal path containing arrows of only one colour ending at the given vertex (resp. a maximal path in which the arrows alternate between the two colours starting at the given vertex.)
    
According to Theorem \ref{thm::AGGentleClassification}, $A$ is Auslander-Gorenstein. Vertices 3, 5, and 6 belong to Case {\rm (i)}, 1,2, and 9 to Case {\rm(ii)}, and 4, 7, and 8 to Case {\rm(iii)} from Theorem \ref{thm::ARpermGentle}. Applying this theorem, it is easy to see that $\hat{\psi}=(142)(3)(5)(6)(789).$

\end{example}

In \cite[Proposition 2.10]{FI}, Fuller and Iwanaga discovered that if a Nakayama algebra is 2-Gorenstein, then it is necessarily 3-Gorenstein as well. This statement generalises as follows.

\begin{theorem} \cite[Corollary 7.1]{Kla}
    Let $A$ be a monomial algebra. If $A$ is $2n$-Gorenstein for some $n\ge 1$ then it is also $(2n+1)$-Gorenstein.   
\end{theorem}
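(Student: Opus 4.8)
The plan is to reformulate the statement homologically and then exploit the combinatorics governing projective resolutions over monomial algebras. Writing $0 \to A \to I^0 \to I^1 \to \cdots$ for the minimal injective coresolution of $A$, being $2n$-Gorenstein means exactly that $\pdim I^i \le i$ for all $0 \le i \le 2n-1$, and the extra content of being $(2n+1)$-Gorenstein is the single inequality $\pdim I^{2n} \le 2n$. So everything reduces to controlling the projective dimension of the $2n$-th term of the coresolution, assuming the bounds at all smaller indices. First I would set up the standard combinatorial description of syzygies over a monomial algebra $A=kQ/I$: every syzygy of a simple module, or more generally of a \emph{path module} $pA$, is again a direct sum of path modules, and passing from $pA$ to its syzygy is dictated by the minimal ways in which the path $p$ can be continued so as to create a relation of $I$. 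Since being $k$-Gorenstein is left--right symmetric, I would feel free to run this analysis on whichever side (over $A$ or over $A^{op}$) keeps the bookkeeping cleanest, translating between the injective coresolution of $A$ and minimal projective resolutions via the duality $D$.

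The heart of the argument is a parity lemma asserting an odd-to-even inheritance of projective dimension,
$$\pdim I^{2n} \le \pdim I^{2n-1} + 1.$$
Granting this, the result is immediate: the $2n$-Gorenstein hypothesis gives $\pdim I^{2n-1} \le 2n-1$, hence $\pdim I^{2n} \le 2n$, which is precisely the additional requirement for $(2n+1)$-Gorenstein. The intuition behind the lemma is that a monomial relation occupies \emph{two} consecutive homological degrees: an arrow out of a vertex contributes at one step, while the completion of that arrow into a relation (the overlap producing the next syzygy) contributes at the following step. The resolution therefore proceeds in a natural two-step rhythm, and the passage from an odd term to the following even term merely \emph{finishes} a relation begun at the odd term; this completion is deterministic enough to raise the projective dimension of the injective summands by at most one. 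For $n=1$ this recovers the Fuller--Iwanaga fact that a $2$-Gorenstein Nakayama algebra is automatically $3$-Gorenstein, which is the base case the general statement is modelled on.

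The main obstacle is proving the parity lemma, and in particular isolating \emph{why} the $+1$ bound is available only at odd-to-even transitions. The analogous bound cannot hold at every step: if $\pdim I^i \le \pdim I^{i-1}+1$ held for all $i$, then every Iwanaga-Gorenstein monomial algebra would be Auslander-Gorenstein, contradicting the nontrivial combinatorial criterion already recorded for gentle algebras in Theorem \ref{thm::AGGentleClassification}. The asymmetry must therefore be extracted from the relation structure itself: at an even-to-odd step a single path can branch into several independent new overlapping relations, so the projective dimension may jump by more than one, whereas at an odd-to-even step one is only completing overlaps that are already present, which is controlled. Making this precise requires a careful analysis of how relations overlap along the chains indexing the syzygies, tracking start and end vertices so as to identify which indecomposable injectives occur in $I^{2n}$ versus $I^{2n-1}$. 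A natural way to tame this is to reduce to the Nakayama case, where modules are uniserial and the overlap combinatorics becomes linear: using the description of Gorenstein monomial algebras as Nakayama algebras glued at degree-$4$ vertices, one can hope to localise the verification of the parity lemma to the Nakayama pieces and there generalise the Fuller--Iwanaga computation from $n=1$ to arbitrary $n$.
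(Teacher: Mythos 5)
Your reformulation is correct: the only new content of $(2n+1)$-Gorensteinness is the inequality $\pdim I^{2n}\le 2n$. However, the parity lemma on which your entire derivation rests, $\pdim I^{2n}\le \pdim I^{2n-1}+1$, is false, even for Nakayama algebras satisfying your hypothesis. Take $A=KQ/I$ with quiver $1\xrightarrow{a}2\xrightarrow{b}3$ and $I=\langle ab\rangle$ (Kupisch series $[2,2,1]$). Here $I(2)\cong P(1)$ and $I(3)\cong P(2)$ are projective-injective, $I(1)=S(1)$, and the minimal injective coresolution of $A$ is $0\to A\to I(2)\oplus I(3)^{2}\to I(2)\to I(1)\to 0$. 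This algebra is gentle and Auslander-Gorenstein (by Theorem \ref{thm::AGGentleClassification}: no vertex has in- or out-degree $2$), in particular $2$-Gorenstein, so your lemma applies with $n=1$; yet $\pdim I^{1}=\pdim P(1)=0$ while $\pdim I^{2}=\pdim S(1)=2$, since $0\to P(3)\to P(2)\to P(1)\to S(1)\to 0$ is the minimal projective resolution. The failure is structural, not an accident of small size: whenever $\domdim A\ge 2n$ the terms $I^{0},\dots,I^{2n-1}$ are projective, so your lemma would force $\pdim I^{2n}\le 1$, a far stronger conclusion than the bound $\pdim I^{2n}\le 2n$ that the theorem asserts and that examples actually attain. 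So the required inequality is an absolute bound tied to the position $2n$ in the coresolution; it cannot be produced as a ``$+1$'' increment over the preceding term, and the heuristic that odd-to-even steps merely ``complete existing relations'' does not survive the smallest examples.

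What remains of your proposal is its closing paragraph, where you have in fact guessed the route taken in \cite{Kla}: a $2n$-Gorenstein monomial algebra ($n\ge1$) is in particular $2$-Gorenstein, hence a union of Nakayama algebras glued at degree-$4$ vertices; cutting at those vertices preserves the relevant projective dimensions in the injective coresolution (the reduction of \cite[Corollary 5.2]{Kla}); and the parity statement is then proved for Nakayama algebras, generalising the Fuller--Iwanaga result \cite{FI} from $n=1$ to all $n$. But in your write-up each of these three steps is only a ``hope'': you neither verify that the reduction is compatible with $k$-Gorensteinness nor give any argument for the Nakayama case with $n>1$, which is where all the actual work lies (via the combinatorics of Kupisch series and of the coresolution). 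As it stands, the proposal contains no proof beyond the correct reformulation, and its one concrete intermediate claim is refuted by a five-dimensional algebra.
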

We remark that for every $n \geq 0$ there exist monomial algebras (even Nakayama algebras) that are $(2n+1)$-Gorenstein but not $(2n+2)$-Gorenstein, we refer to \cite[Remark 6.3]{Kla} for details.

Finally, as an application of the reduction process described above, one obtains that monomial algebras satisfy a stronger version of Conjecture \ref{Auslander-Reiten Conjecture}. 

\begin{theorem}\cite[Corollary 7.10]{Kla}
    Let $A$ be a monomial algebra with $n$ simple modules. If $A$ is $(4n-2)$-Gorenstein, then $A$ is $(4n-2)$-Iwanaga-Gorenstein, i.e. $\idim A_A = \idim {}_AA \le 4n-2.$
\end{theorem}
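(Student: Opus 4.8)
The plan is to reduce to Nakayama algebras and then run a finite combinatorial argument on the minimal injective coresolution $0 \to A \to I^0 \to I^1 \to \cdots$ of the regular right module. First I would record two preliminary reductions. Since being $(4n-2)$-Gorenstein is left--right symmetric and the conclusion $\idim A_A = \idim {}_AA \le 4n-2$ is itself symmetric, it suffices to bound $\idim A_A$, after which the identical argument applied to $A^{\mathrm{op}}$ supplies the other inequality. Next I would invoke the standard minimality criterion: an indecomposable injective $I(j)$ is a summand of $I^i$ if and only if $\Ext_A^i(S(j),A) \neq 0$, so $\idim A_A$ is the largest $i$ for which some $\Ext_A^i(S(j),A)$ is nonzero. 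The hypothesis that $A$ is $(4n-2)$-Gorenstein then says exactly that whenever $I(j)$ occurs in $I^i$ with $i < 4n-2$ we have $\pdim I(j) \le i$; in particular every such $I(j)$ has \emph{finite} projective dimension.

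I would then apply the reduction process described above, cutting $A$ at its degree-$4$ vertices into a disjoint union of Nakayama algebras. The crucial bookkeeping is that this operation does not increase the number of simple modules and is compatible with the $k$-Gorenstein and Iwanaga-Gorenstein conditions, so the problem is reduced to the following statement: a Nakayama algebra $B$ with $m \le n$ simples that is $(4n-2)$-Gorenstein satisfies $\idim B_B \le 4n-2$. Checking that the reduction transports the constant $4n-2$ faithfully is the first place one must be careful.

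For Nakayama $B$ I would use the combinatorics of uniserial modules. Two facts drive the argument: any $B$-module of finite projective dimension has $\pdim \le 2m-2$, and the syzygy operation acts on the finite set of uniserial syzygy modules periodically, with period bounded linearly in $m$ via the resolution quiver (a functional graph on $m$ vertices). Combining the first fact with the first paragraph, every indecomposable injective occurring before position $4n-2$ has projective dimension at most $2m-2 \le 2n-2$. I would then argue by contradiction: if $\idim B_B > 4n-2$, some $\Ext_B^i(S(j),B)$ is nonzero for $i > 4n-2$, and periodicity of the pattern $i \mapsto \Ext_B^i(S(j),B)$ pulls this nonvanishing down into the range $i < 4n-2$, forcing $\pdim I(j) \le 2n-2$. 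A cosyzygy/dimension-shifting count then shows that an injective with projective dimension at most $2n-2$ cannot reoccur beyond position $4n-2$, once the even/odd refinement supplied by the result that a $2m$-Gorenstein monomial algebra is automatically $(2m+1)$-Gorenstein is used to secure the inequalities $\pdim I^i \le i$ across two full periods. Hence $I^i = 0$ for $i > 4n-2$.

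The main obstacle is precisely this last step: upgrading the finite list of local inequalities $\pdim I^i \le i$, valid only for $i < 4n-2$, into the global conclusion that the coresolution terminates. This is the finite analogue of the Auslander--Reiten Conjecture (Conjecture \ref{Auslander-Reiten Conjecture}), and it is only the rigidity of the Nakayama combinatorics that makes it accessible. Extracting the \emph{sharp} constant $4n-2$, rather than some cruder multiple of $n$, is the delicate point: it requires aligning the length of the periodic window (period $\le n$ from the resolution quiver), the even/odd jump, and the $2n-2$ ceiling on finite projective dimensions so that two periods fit exactly inside the guaranteed range. Getting this numerology to close up on the nose is where the bulk of the effort will go.
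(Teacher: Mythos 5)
Your high-level route --- cut the algebra at its degree-$4$ vertices, reduce to Nakayama algebras, and exploit the rigidity of uniserial modules --- is exactly the route the survey attributes to \cite{Kla} (Corollaries 5.2, 7.1 and 7.10 there), so the strategy itself is the right one. But there are two genuine problems. First, your bookkeeping claim that the cutting operation ``does not increase the number of simple modules'' is false: cutting a degree-$4$ vertex separates the two strands passing through it, so that one vertex becomes \emph{two} vertices of the resulting disjoint union of Nakayama algebras. An algebra with $n$ simples and $d$ degree-$4$ vertices cuts into Nakayama pieces with $n+d \le 2n$ simples in total, and this near-doubling is precisely where the constant in the theorem comes from: for a Nakayama algebra with $m$ simples, every module of finite projective (or injective) dimension has that dimension at most $2m-2$, and with $m \le 2n$ this ceiling is $2(2n)-2=4n-2$. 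Under your claim $m \le n$, the numerology you are trying to ``close up on the nose'' would naturally point to the constant $2n-2$, not $4n-2$ --- a sign that the reduction has been mis-tracked, not that the theorem's constant is loose.

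Second, and more seriously, the core of the theorem is not proved but postponed. The entire content of the statement is the passage from the finitely many local inequalities $\pdim I^i \le i$ for $i<4n-2$ to the global conclusion that the minimal injective coresolution terminates; this is, as you note, a finite analogue of Conjecture \ref{Auslander-Reiten Conjecture}. You describe a contradiction argument via ``periodicity of $i \mapsto \Ext_B^i(S(j),B)$,'' but you neither set up the periodic structure you need (the resolution/coresolution quiver of the Nakayama algebra and its cycles), nor show that the preperiod plus period fits inside the window where the Gorenstein hypothesis gives you control, nor carry out the ``cosyzygy/dimension-shifting count'' you invoke; you explicitly concede that aligning these bounds ``is where the bulk of the effort will go.'' As written, the proposal reproduces the paper's reduction step (modulo the vertex-count error above) but leaves the Nakayama case --- which is where the actual mathematical work of \cite[Corollary 7.10]{Kla} lies --- as an acknowledged open step, so it cannot be accepted as a proof.
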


\section{Auslander regular algebras among incidence algebras of posets}

We will call a partially ordered set a poset for short and assume that all posets are finite.
Recall that the \textit{Hasse quiver} $Q_P$ of a poset $P$ is by definition the quiver whose vertices $x$ correspond to elements $x \in P$ of the poset, and where $x \rightarrow y$ is an arrow in $Q_P$ if and only if $y$ covers $x$ in $P$.
For $x \in P$, $\cov(x)$ denotes the set of covers of $x$ and $\cocov(x)$ the set of cocovers of $x$.
The \textit{incidence algebra} $KP$ over a field $K$ of $P$ is defined as the quiver algebra $KQ_P/I$ where $Q_P$ is the Hasse quiver of $P$ and $I$ is generated by all expressions of the form $w_1 - w_2$ where $w_1$ and $w_2$ are paths of length $\geq 2$ starting and ending at the same vertices. By definition, in an incidence algebra, there is a unique path from $x$ to $y$, denoted by $p_y^x$, for elements $x,y \in P$ with $x \leq y$. Recall that a poset $P$ is a \textit{lattice} if every two elements $x,y \in P$ have a unique join $x \lor y$ (the smallest element $\geq x$ and $\geq y$) and a unique meet $x \land y$ (the largest element $\leq x$ and $\leq y$).
A lattice $L$ is called \textit{distributive} if we have $x \land  (y \lor z)= (x \land y) \lor (x \land z)$ for all $x,y,z \in L$.
For example the Boolean lattice of an $n$-set consisting of all subsets of $\{1,..,n\}$ is a well-known distributive lattice. We denote the unique minimal element in a lattice by $m$ and the unique maximal element by $M$.
Incidence algebras are classical topics in mathematics appearing in algebraic combinatorics \cite{Stanley} and number theory \cite{Aigner}, but recently, they have also become popular in various areas of representation theory \cite{Simson} and even some applied fields such as topological data analysis \cite{Baueretal} \cite{Bruestleetal}. 
Recently, a new connection was found between classical order theory of lattices and homological algebra: A lattice $L$ is distributive if and only if the incidence algebra $KL$ is Auslander regular. We will provide more details on this important result in the following.
In fact, for distributive lattices, one can explicitly determine the minimal injective coresolution of the regular module.
We associate to an antichain $C=\{x_1,\ldots,x_r\}$ in the lattice $L$ the submodule
\[N_C:=\sum\limits_{i=1}^{r}{p_{x_i}^m A}\]
of the projective-injective indecomposable module $P(m)$, which is the right ideal of $A$ generated by the elements $p_{x_i}^m$ for $i=1,\ldots,r$.
For an antichain $C$ in a lattice $L$, we define the \textit{antichain module} associated to $C$ by
\[M_C:=P(m)/N_C.\]
The next proposition gives an explicit (in general non-minimal) projective resolution of antichain modules:
\begin{theorem}\cite[Theorem 2.2]{IMdist} \label{antichainmoduleresolution}
Let $L$ be a lattice with incidence algebra $A$. Let $C$ be an antichain of cardinality $\ell$ with associated module $M_C$.
Then $M_C$ has a projective resolution 
\[0\to P_\ell\to\cdots\to P_0\to M_C \to0\ \mbox{ with }\ P_0=P(m) \mbox{ and } \ P_r = \bigoplus\limits_{S\subseteq C,\ |S|=r}^{}P(\bigvee S) \ \mbox{ for }\ 1\le r\le\ell.\]
\end{theorem}
We refer to \cite[Section 2]{IMdist} for a proof and more details such as the explicit form of the maps in the projective resolution.
Using the structure theory of distributive lattices, one can then obtain the following theorem that gives explicit minimal resolutions for indecomposable projective and indecomposable injective modules in the incidence algebra of a distributive lattice:

\begin{theorem}\cite[Theorem 3.2]{IMdist} \label{homologicaldimensionsandausreg}
Let $L$ be a distributive lattice with incidence algebra $A$ and $x,y\in L$. Let $P(y)$ be the unique indecomposable projective right $A$-module corresponding to the point $y$ and $I(x)$ the unique indecomposable injective right $A$-module corresponding to the point $x$. Then
\begin{enumerate}
\item[\rm(1)] $A$ is diagonal Auslander regular and has a minimal injective coresolution
$$0 \rightarrow A \rightarrow I^0 \rightarrow I^1 \rightarrow \cdots \rightarrow I^i \rightarrow \cdots $$
such that $I^i$ is a direct sum of copies of $I(x)$ for elements $x\in L$ with $|\cov(x)|=i$.
\item[\rm(2)] 
$\pdim I(x)=|\cov(x)|=:\ell$ holds, and $I(x)$ has a minimal projective resolution 
\[0\to P_\ell\to\cdots\to P_0\to I(x) \to0\ \mbox{ with }\ P_0=P(m)  \]
\[ \mbox{ and } \ P_r = \bigoplus\limits_{S\subseteq\min([m,x]^c),\ |S|=r}^{}P(\bigvee S) \ \mbox{ for }\ 1\le r\le\ell.\]
\item[\rm(3)] $\idim P(y)=|\cocov(y)|=:\ell$ holds, and $P(y)$ has a minimal injective coresolution
\[0\to P(y)\to I^0\to\cdots\to I^\ell\to0\ \mbox{ with }\ I^0=I(M) \] 
\[ \mbox{ and } \ I^r = \bigoplus\limits_{S\subseteq\max([y,M]^c),\ |S|=r}^{}I(\bigwedge S) \ \mbox{ for }\ 1\le r\le\ell.\]
\end{enumerate}
\end{theorem}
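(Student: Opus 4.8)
The plan is to derive all three parts from the antichain resolution of Theorem~\ref{antichainmoduleresolution}, the key being to recognise each indecomposable injective as an antichain module. Fix $x\in L$ and set $C:=\min([m,x]^c)$, the set of minimal elements of $L$ that do not lie below $x$; as a set of minimal elements it is automatically an antichain. I claim $I(x)\cong M_C$. The module $M_C=P(m)/N_C$ is spanned by the classes $\bar p_z^m$ with $z\not\ge c$ for every $c\in C$, and $z\not\ge c$ for all $c\in C$ is equivalent to $z\le x$: if $z\le x$ and some $c\le z$ then $c\le x$, contradicting $c\in[m,x]^c$; conversely if $z\not\le x$ then $z\in[m,x]^c$ dominates some minimal element $c\in C$. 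Hence $M_C$ is supported exactly on the interval $[m,x]$. A class $\bar p_z^m$ lies in the socle iff every arrow out of $z$ leaves this interval, i.e.\ iff $z=x$, so $\soc M_C=S(x)$ is simple; since moreover $\dim_K M_C=|[m,x]|=\dim_K I(x)$, the embedding of $M_C$ into the injective envelope of its socle is an isomorphism $M_C\cong I(x)$. Feeding $C$ into Theorem~\ref{antichainmoduleresolution} now yields a projective resolution of $I(x)$ of exactly the shape in (2), with $P_0=P(m)$ and $P_r=\bigoplus_{S\subseteq C,\,|S|=r}P(\bigvee S)$.

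The crux, and the only place distributivity is genuinely used, is that this resolution is minimal; for a general lattice it need not be. Each component of the $r$-th differential has the form $P(\bigvee S)\to P(\bigvee S')$ with $S'=S\setminus\{c\}$, given by right multiplication with the path $p_{\bigvee S}^{\bigvee S'}$, and it lies in $\rad A$ exactly when $\bigvee S'<\bigvee S$, that is when $c\not\le\bigvee(S\setminus\{c\})$. So minimality is equivalent to $C$ being \emph{join-independent}. I would establish this from distributivity. By minimality of $c$ in $[m,x]^c$ everything strictly below $c$ lies below $x$, so $c$ covers $c\wedge x$ and, by the diamond transposition in modular lattices, $c\vee x$ covers $x$. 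The distributive law shows $c\mapsto c\vee x$ is an injection of $C$ into $\cov(x)$ (if $c_i\vee x=c_j\vee x$ then $c_i=(c_i\wedge c_j)\vee(c_i\wedge x)$ forces $c_i=c_j$) and it is visibly surjective, so $|C|=|\cov(x)|$ and distinct members of $C$ have joins-with-$x$ that are distinct covers, hence meet in $x$. Now if $c\le\bigvee_{c'\in S'}c'$ with $c\in C\setminus S'$, then $c\vee x\le\bigvee_{c'\in S'}(c'\vee x)$, and distributivity gives $c\vee x=\bigvee_{c'\in S'}\big((c\vee x)\wedge(c'\vee x)\big)=\bigvee_{c'\in S'}x=x$, contradicting $c\not\le x$. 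This proves join-independence, hence minimality, and therefore $\pdim I(x)=|C|=|\cov(x)|$, which is (2).

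Part (3) I would obtain by duality. The opposite algebra $A^{\mathrm{op}}$ is the incidence algebra of the dual lattice $L^{\mathrm{op}}$, which is again distributive, and $D$ sends a minimal injective coresolution of the right $A$-module $P(y)$ to a minimal projective resolution of the injective right $A^{\mathrm{op}}$-module $DP(y)=I^{\mathrm{op}}(y)$. Applying the already proven (2) over $L^{\mathrm{op}}$ and dualising back produces the asserted coresolution of $P(y)$, with $\idim P(y)=|\cov_{L^{\mathrm{op}}}(y)|=|\cocov(y)|$, the joins over $L^{\mathrm{op}}$ turning into the meets $\bigwedge S$ over $L$ and $\min([m,y]^c)$ turning into $\max([y,M]^c)$. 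For (1) I would then assemble: since $A=\bigoplus_yP(y)$ and a direct sum of minimal injective coresolutions is minimal, the minimal injective coresolution of $A$ has $I^r=\bigoplus_y\bigoplus_{S\subseteq\max([y,M]^c),\,|S|=r}I(\bigwedge S)$. Regrouping the summands by the value $x=\bigwedge S$ collapses this to a single copy of $I(x)$ occurring exactly in degree $r=|\cov(x)|$; this multiplicity-one statement can be verified combinatorially in the distributive lattice, or equivalently via the standard identity that the multiplicity of $I(x)$ in $I^r$ equals $\dim_K\Ext_A^r(S(x),A)$. Finally, incidence algebras of finite posets have finite global dimension, and (2) gives $\pdim I^r\le r$, so $A$ satisfies the Auslander condition and is Auslander regular; the sharper equality $\pdim X=i$ for every summand $X$ of $I^i$ is precisely the diagonal property.

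I expect the minimality argument of the second paragraph to be the main obstacle: the identification of $I(x)$ with an antichain module and the passage to $L^{\mathrm{op}}$ are formal, whereas minimality is exactly where the distributive identity $u\wedge(v\vee w)=(u\wedge v)\vee(u\wedge w)$ is indispensable. It is also the natural failure point in the non-distributive case, where the antichain $\min([m,x]^c)$ acquires colliding subset-joins that force a length-zero path into a differential and destroy minimality; this is ultimately what underlies the equivalence between distributivity of $L$ and Auslander regularity of $KL$. A secondary, purely bookkeeping obstacle is the combinatorial regrouping in (1), which I would dispatch either directly or through the $\Ext_A^r(S(x),A)$ computation, whichever proves shorter.
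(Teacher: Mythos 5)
Your treatment of parts (2) and (3) is correct and follows essentially the same route as the paper: identify $I(x)$ with the antichain module $M_C$ for $C=\min([m,x]^c)$ (your socle-plus-dimension argument is sound, and this identification indeed needs no distributivity), reduce minimality of the resolution from Theorem~\ref{antichainmoduleresolution} to join-independence of $C$, deduce join-independence from distributivity via the bijection $c\mapsto c\vee x$ from $C$ onto $\cov(x)$ and the fact that distinct covers of $x$ meet in $x$, and obtain (3) by passing to $K(L^{\mathrm{op}})$ and dualising. This is precisely the ``Theorem~\ref{antichainmoduleresolution} plus structure theory of distributive lattices'' argument the survey alludes to, and for (2) and (3) your execution is complete (modulo the explicit form of the differentials, which the survey delegates to \cite[Section 2]{IMdist} and which you correctly assume to be given componentwise by multiplication with the paths $p_{\bigvee S}^{\bigvee (S\setminus\{c\})}$).

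Part (1), however, contains a genuine gap. Your claim that regrouping ``collapses this to a single copy of $I(x)$'' is simply false: since $\soc P(y)\cong S(M)$ for \emph{every} $y\in L$, the injective envelope of $A$ is $I^0\cong I(M)^{\oplus |L|}$, so $I(M)$ occurs with multiplicity $|L|$, not $1$ (already for the two-element chain, $I^0=I(M)\oplus I(M)$). The theorem does not assert multiplicity one, so this error does not threaten the statement, but what the statement \emph{does} require --- that every summand $I\bigl(\bigwedge S\bigr)$ with $S\subseteq\max([y,M]^c)$, $|S|=r$, satisfies $|\cov(\bigwedge S)|=r$ --- is exactly what you leave unproven, and it is not ``purely bookkeeping'': it is a second, independent use of distributivity. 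Your proposed fallback via the identity $\dim_K\Ext_A^r(S(x),A)=$ multiplicity of $I(x)$ in $I^r$ is circular, since evaluating that Ext group requires either a projective resolution of $S(x)$ or the very coresolution of $A$ being constructed. The missing step can be supplied by Birkhoff's representation theorem: writing $L$ as the lattice of order ideals of a poset $P$ and $y$ as the ideal $I_y$, one checks that $\max([y,M]^c)=\{P\setminus\{q\mid q\geq p\}\ :\ p\in\max(I_y)\}$, so for $S$ corresponding to maximal elements $p_1,\dots,p_r$ of $I_y$ one gets $\bigwedge S=P\setminus\bigl(\bigcup_i\{q\mid q\geq p_i\}\bigr)$, whose complement has exactly $p_1,\dots,p_r$ as minimal elements (they are pairwise incomparable); since covers of an order ideal biject with the minimal elements of its complement, $|\cov(\bigwedge S)|=r$ as needed. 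With this lemma inserted, your assembly of (1) from (3), together with $\pdim I^i=i$ from (2) and the finiteness of the global dimension of incidence algebras, does yield diagonal Auslander regularity.
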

Thus, incidence algebras of distributive lattices give a new class of examples of diagonal Auslander regular algebras.
The other direction, that $KL$ being Auslander regular implies that $L$ must be distributive, follows from the following slightly stronger statement:
\begin{theorem}\cite[Theorem 5.4]{IMdist}
Let $L$ be a lattice such that $KL$ is 2-Gorenstein. Then $L$ is distributive.
\end{theorem}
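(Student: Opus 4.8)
The plan is to prove the contrapositive: if $L$ is not distributive, then $KL$ is not $2$-Gorenstein. First I would record a simplification special to incidence algebras of lattices. Since $L$ has a unique maximum $M$, every indecomposable projective $P(x)=e_x A$ has simple socle $S(M)$, so its injective envelope is the projective-injective module $P(m)\cong I(M)$; consequently the injective envelope $I^0$ of the regular module is projective and $\domdim KL\ge 1$ holds unconditionally. Thus the entire force of the $2$-Gorenstein condition lies in the single requirement $\pdim I^1\le 1$ for the first cosyzygy term, or equivalently, via the grade reformulation of $k$-Gorensteinness (the truncation of Theorem \ref{AusGorcharacterisation}(3) to $i\le 1$, due to \cite{FGR}), in the statement that every submodule $X$ of every module $\Ext^1_{A^{op}}(C,A)$ satisfies $\grade X\ge 1$, i.e. $\Hom_A(X,A)=0$.

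Next I would translate non-distributivity into a configuration that the incidence algebra can detect. By the classical forbidden-sublattice characterisation, a non-distributive $L$ contains a copy of $M_3$ or $N_5$; the feature I want to extract is a \emph{join degeneracy}, namely an antichain $C$ together with two distinct subsets $S\neq S'\subseteq C$ with $\bigvee S=\bigvee S'$ (in $M_3$ all three two-element subsets of the atoms join to the top). Such a degeneracy makes the explicit projective resolution of the antichain module $M_C$ from Theorem \ref{antichainmoduleresolution} badly non-minimal: the terms $P_r=\bigoplus_{|S|=r}P(\bigvee S)$ then contain repeated summands $P(\bigvee S)=P(\bigvee S')$, forcing cancellation. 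I would analyse these cancellations to produce an explicit submodule $X$ of a first extension group $\Ext^1_{A^{op}}(C,A)$ — dually, an indecomposable summand $I(x)$ of $I^1$ of projective dimension $\ge 2$ — that witnesses the collapse, and then exhibit a nonzero map $X\to A$, contradicting the grade condition above. In the distributive case the same resolution is minimal, since the joins $\bigvee S$ are then pairwise distinct and reproduce the Boolean Betti numbers $\binom{|C|}{r}$, which is exactly why the obstruction disappears and why one recovers Theorem \ref{homologicaldimensionsandausreg}.

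The remaining work is to make this local witness global. A forbidden sublattice $M_3$ or $N_5$ need not be an interval of $L$, so I would first pass to a minimal non-distributive interval $[u,v]$ and work with the idempotent subalgebra $e\,KL\,e$, where $e=\sum_{u\le z\le v}e_z$; this is precisely the incidence algebra $K[u,v]$, and I would check that the join degeneracy, and hence the failure of the grade inequality, survives inside $KL$ itself.

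The hard part will be exactly this passage between the local and global structure: relating the $2$-Gorenstein property of $KL$ to that of the interval algebra $K[u,v]$ is not formal, because idempotent subalgebras do not in general inherit Gorenstein-type conditions, and one must instead track the specific module $X$ and its grade directly inside $KL$. A secondary technical point is the minimality bookkeeping in the non-distributive antichain resolution of Theorem \ref{antichainmoduleresolution}: one must pin down precisely which summands cancel and verify that after cancellation a module of grade $0$ genuinely appears, rather than the cancellations conspiring to preserve the grade bound.
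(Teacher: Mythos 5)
Your opening reduction is correct: for a lattice $L$ the algebra $A=KL$ automatically satisfies $\domdim A\ge 1$, so $2$-Gorensteinness is exactly $\pdim I^1\le 1$, equivalently the truncated grade condition of Theorem \ref{AusGorcharacterisation}(3). The fatal gap is the combinatorial dichotomy on which everything after that rests. A ``join degeneracy'' --- one antichain $C$ with distinct subsets $S\neq S'$ satisfying $\bigvee S=\bigvee S'$ --- neither follows from non-distributivity nor obstructs distributivity. Take $L=N_5$ itself (elements $u<a<c<v$ and $b$ incomparable to $a,c$): it is not distributive, but its only antichains are $\{a,b\}$ and $\{c,b\}$, and within either one distinct subsets have distinct joins; the degeneracy $a\vee b=c\vee b$ compares two \emph{different} antichains, which your configuration does not allow, so for this $L$ your method produces no witness at all. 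Conversely, take $L=B_3$, the Boolean lattice on $\{1,2,3\}$: it is distributive, yet its antichain of coatoms $\{12,13,23\}$ satisfies $\bigvee\{12,13\}=123=\bigvee\{12,23\}$. Since $B_3$ is distributive, $KB_3$ is Auslander regular and in particular $2$-Gorenstein by Theorem \ref{homologicaldimensionsandausreg}, so no amount of cancellation bookkeeping can convert a join degeneracy into a grade-$0$ submodule of some $\Ext^1_{A^{op}}(-,A)$: the implication you intend to prove is false. The same example refutes the claim that in the distributive case the resolution of Theorem \ref{antichainmoduleresolution} is minimal ``since the joins $\bigvee S$ are pairwise distinct''; minimality holds only for the special antichains $C=\min([m,x]^c)$ appearing in Theorem \ref{homologicaldimensionsandausreg}(2), and the reason is that those antichains consist of join-irreducible elements.

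The proof in \cite{IMdist} instead works with element-wise criteria inside $L$, which also removes the need for the interval/idempotent reduction you rightly flag as non-formal (and which would indeed not be justified, since $eAe$ need not reflect Gorenstein-type conditions). Two points your proposal never engages with are exactly these criteria. First, non-minimality of some antichain resolution is irrelevant unless the corresponding injective actually occurs in the coresolution of $A$: one needs to know when $I(x)\mid I^1$, i.e.\ when $\Ext^1_A(S(x),A)\neq 0$; computing $\soc(I^0/A)$, where $I^0$ is a direct sum of copies of $P(m)$, shows this happens precisely when $\bigwedge\cov(x)>x$. Second, $\pdim I(x)\le 1$ has an exact criterion: the syzygy of $I(x)$ is $N_C$ with $C=\min([m,x]^c)$, which has simple socle (being a submodule of $P(m)$), hence is indecomposable, hence is projective if and only if its top $\bigoplus_{c\in C}S(c)$ is simple, i.e.\ $|C|=1$. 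Thus $KL$ is $2$-Gorenstein if and only if every $x$ with $\bigwedge\cov(x)>x$ admits a unique minimal element of $[m,x]^c$, and the theorem reduces to the purely lattice-theoretic assertion that this forces distributivity --- proved by comparing \emph{distinct antichains of join-irreducible elements with equal joins} (Birkhoff's criterion), which is the correct substitute for your degeneracy notion. Since the central implication of your plan is contradicted by $B_3$, the argument cannot be repaired by tightening the bookkeeping; it needs this different combinatorial core.
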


The \textit{order dimension} of a poset $P$ is defined as the minimal $t$ such that $P$ can be embedded into a product of $t$ totally ordered chains. The order dimension is a fundamental notion in order theory with many mysteries and open problems surrounding it, see for example the textbook \cite{Trotter} dedicated to the order dimension. 
Suprisingly, the fundamental notion of dimension in order theory, the order dimension, and the fundamental notion of dimension in homological algebra, the global dimension, coincide in a nice way for distributive lattices:
\begin{theorem}\cite[Theorem 3.9]{IMdist}
Let $L$ be a distributive lattice with at least two elements.
Then the order dimension of $L$ coincides with the global dimension of $KL$.
\end{theorem}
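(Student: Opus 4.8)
The plan is to prove the two-sided chain of equalities $\gldim KL = w = \dim L$, where $w := \max_{x \in L}|\cov(x)|$ is the largest number of covers occurring in $L$ and $\dim L$ denotes the order dimension. Writing $A = KL$, the first equality $\gldim A = w$ is purely homological and is read off from Theorem \ref{homologicaldimensionsandausreg}, while the second equality $w = \dim L$ is purely order-theoretic and combines Birkhoff's representation theorem with Dilworth's theorem. I would treat the two halves separately and then concatenate.

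For the homological half I would argue as follows. By Theorem \ref{homologicaldimensionsandausreg}(1) the algebra $A$ is Auslander regular, hence of finite global dimension, and it is standard that a finite-dimensional algebra of finite global dimension satisfies $\gldim A = \idim A_A$ (a simple module $S$ with $\pdim S = \gldim A$ has $\Ext_A^{\gldim A}(S,A)\neq 0$, forcing $\idim A_A \geq \gldim A$). Theorem \ref{homologicaldimensionsandausreg}(1) describes the minimal injective coresolution $0 \to A \to I^0 \to I^1 \to \cdots$ explicitly: each $I^i$ is a direct sum of copies of $I(x)$ with $|\cov(x)| = i$. Since no $x$ has more than $w$ covers, $I^i = 0$ for $i > w$, whence $\idim A_A \leq w$. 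On the other hand, Theorem \ref{homologicaldimensionsandausreg}(2) gives $\pdim I(x) = |\cov(x)|$, so choosing $x^\ast$ with $|\cov(x^\ast)| = w$ yields $\gldim A \geq \pdim I(x^\ast) = w$. Combining, $w \leq \gldim A = \idim A_A \leq w$, so $\gldim A = w$.

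For the order-theoretic half I would invoke Birkhoff's theorem to write $L \cong J(P)$, the lattice of down-sets of the poset $P$ of join-irreducible elements of $L$. Under this identification the covers of a down-set $D$ are precisely the sets $D \cup \{p\}$ with $p$ a minimal element of the up-set $P \setminus D$, so $|\cov(D)|$ equals the number of minimal elements of $P\setminus D$. As $D$ ranges over all down-sets, these minimal-element sets range over all antichains of $P$, giving $w = \operatorname{width}(P)$. Finally I would show $\dim J(P) = \operatorname{width}(P)$: for the lower bound, a maximal antichain generates a Boolean sublattice $B_{\operatorname{width}(P)} \hookrightarrow J(P)$ as an induced subposet, and order dimension is monotone under passing to subposets, so $\dim J(P) \geq \dim B_{\operatorname{width}(P)} = \operatorname{width}(P)$; for the upper bound, a Dilworth decomposition of $P$ into $\operatorname{width}(P)$ chains $C_1,\dots,C_{\operatorname{width}(P)}$ induces the order embedding $D \mapsto (D\cap C_1,\dots,D\cap C_{\operatorname{width}(P)})$ of $J(P)$ into a product of $\operatorname{width}(P)$ chains, so $\dim J(P) \leq \operatorname{width}(P)$. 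Hence $w = \operatorname{width}(P) = \dim L$, and together with the homological half this finishes the proof.

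The homological half is essentially a direct reading of Theorem \ref{homologicaldimensionsandausreg} once $\gldim A = \idim A_A$ is granted, so I expect the genuine obstacle to be the order-theoretic identity $\dim J(P) = \operatorname{width}(P)$. The delicate points there are verifying that the antichain really does span an induced Boolean subposet (so that the dimension lower bound applies), and that the chain-decomposition map is an order embedding rather than merely order-preserving; both rest on the fact that in $J(P)$ comparability of down-sets is tested componentwise along any partition of $P$ into chains.
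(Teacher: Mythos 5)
Your proof is correct, but it is assembled from different ingredients than the paper's. The paper indicates that its proof rests on (a) explicit minimal projective resolutions of the \emph{simple} modules, from which the global dimension is read off directly as the maximum of their projective dimensions, and (b) a classical theorem of Dilworth identifying the order dimension of a distributive lattice with the width of its poset of join-irreducibles. You replace (a) by the injective-side data of Theorem \ref{homologicaldimensionsandausreg} together with the standard identity $\gldim A=\idim A_A$ for algebras of finite global dimension --- a legitimate shortcut in the context of this survey, where Theorem \ref{homologicaldimensionsandausreg} is available as a black box while the simple-module resolutions are not --- and you replace (b) by a proof: Birkhoff's theorem converts $\max_{x}|\cov(x)|$ into $\operatorname{width}(P)$, a Dilworth chain decomposition gives the order embedding $J(P)\hookrightarrow\prod_i J(C_i)$ and hence $\dim J(P)\le\operatorname{width}(P)$, and an induced copy of $B_{\operatorname{width}(P)}$ inside $J(P)$ (via $S\mapsto{\downarrow}S$) gives the reverse inequality. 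What your route buys is self-containedness relative to this survey; what it costs is that the hard combinatorial content is still imported, now in the form of the classical fact $\dim B_k=k$, whose lower bound is precisely the nontrivial part --- so the argument is not genuinely more elementary than citing Dilworth, it only pushes the classical citation one level deeper. One small correction: the image of $S\mapsto{\downarrow}S$ is an induced subposet of $J(P)$ isomorphic to the Boolean lattice, but in general it is \emph{not} a sublattice, since meets are not respected (${\downarrow}S\cap{\downarrow}T$ can strictly contain ${\downarrow}(S\cap T)$ when elements of the antichain have common lower bounds); this is harmless because order dimension is monotone under induced subposets, which is all your argument uses, exactly as you note at the end.
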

In particular, this implies that the global dimension of the incidence algebra of a distributive lattice is independent of the field $K$, a result that is not true for general posets. 
The proof of the previous theorem follows from an explicit description of the minimal projective resolutions of the simple modules and a classical result of Dilworth \cite{Dil}. 
Since the incidence algebra of a distributive lattice $KL$ is Auslander regular, a natural question arises: What is the Auslander-Reiten permutation of $KL$?
There is a fundamental operation acting on the points of a distributive lattice called rowmotion bijection. First, let us recall Birkhoff's classical representation theorem for distributive lattices. This states that every distributive lattice $L$ is isomorphic to the set of order ideals of a unique poset $P$. Here, an \textit{order ideal} $I$ in a poset $P$ is a subset $I$ of $P$ such that if $y \in I$ then $x \in I$ as well for all $x \leq y$.
We can then define the \textit{rowmotion bijection,} $row$  on a distributive lattice which is given as the set of order ideals of a poset $P$ as the image of an order ideal $I$, $\row(I)$ being the order ideal generated by the minimal elements in $P \setminus I$. This is a classical bijection on the set of order ideals of $P$ and thus on the elements of the distributive lattice $L$, which was discovered several times independently since the 1970s. For a recent survey article about the rowmotion operation and dynamical algebraic combinatorics, we refer, for example to \cite{Striker}.
We can now state the following theorem:
\begin{theorem}
Let $L$ be a distributive lattice given as the set of order ideals of a poset $P$.
Then the Auslander-Reiten permutation of $KL$ on the points of $L$ coincides with the rowmotion bijection of $L$. 
\end{theorem}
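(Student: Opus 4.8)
The plan is to compute the Auslander--Reiten permutation directly from the explicit minimal projective resolutions of the indecomposable injective modules recorded in Theorem~\ref{homologicaldimensionsandausreg}, and then to recognise the resulting combinatorial formula as rowmotion by passing through Birkhoff's correspondence. By definition the Auslander--Reiten bijection sends $I(x)$ to the $d$-th syzygy $\Omega^{d}(I(x))$ with $d=\pdim I(x)$, which for a minimal resolution is exactly the last nonzero projective term. First I would read off from Theorem~\ref{homologicaldimensionsandausreg}(2) that $d=|\cov(x)|=:\ell$ and that this last term is $P_\ell=\bigoplus_{S\subseteq\min([m,x]^c),\,|S|=\ell}P(\bigvee S)$. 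The crucial observation is that $|\min([m,x]^c)|=\ell$, so the only admissible $S$ is the whole set $\min([m,x]^c)$; hence $P_\ell=P(\bigvee\min([m,x]^c))$ is indecomposable and $\hat{\psi}(x)=\bigvee\min([m,x]^c)$.

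The second step translates this formula into the language of order ideals. Writing $x\in L$ as the order ideal $I_x\subseteq P$ under Birkhoff's isomorphism $L\cong J(P)$, the order relation becomes inclusion and the join becomes union. The interval $[m,x]$ corresponds to the order ideals contained in $I_x$, so $[m,x]^c$ corresponds to those order ideals $J$ with $J\not\subseteq I_x$. I would then prove the combinatorial lemma that the minimal such $J$ are exactly the principal order ideals $\langle p\rangle$ for $p\in\min(P\setminus I_x)$: if $p$ is minimal in $P\setminus I_x$ then every element strictly below $p$ lies in $I_x$, so $\langle p\rangle\setminus\{p\}\subseteq I_x$ and $\langle p\rangle$ is minimal among order ideals escaping $I_x$; conversely any minimal escaping ideal is forced to be principal of this form. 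This simultaneously yields the cardinality identity $|\min([m,x]^c)|=|\min(P\setminus I_x)|=|\cov(x)|$ used above, since the covers of $I_x$ in $J(P)$ are precisely the $I_x\cup\{p\}$ with $p\in\min(P\setminus I_x)$.

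Finally, taking joins, $\bigvee\min([m,x]^c)$ corresponds to $\bigcup_{p\in\min(P\setminus I_x)}\langle p\rangle$, which is by definition the order ideal generated by the minimal elements of $P\setminus I_x$, i.e.\ $\row(I_x)$. This identifies $\hat{\psi}$ with the rowmotion bijection and finishes the argument; alternatively one could reach the same conclusion through Theorem~\ref{ARpermequalgradeperm} by computing the grade permutation, but the injective-resolution route is more direct. I expect the main obstacle to be the combinatorial lemma of the second step, pinning down $\min([m,x]^c)$ and matching it with $\min(P\setminus I_x)$; once the indecomposability of the last syzygy is secured via the cardinality equality $|\min([m,x]^c)|=|\cov(x)|$, the remainder is bookkeeping inside Birkhoff's duality. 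A minor point to verify carefully is that the resolution in Theorem~\ref{homologicaldimensionsandausreg}(2) is genuinely minimal, so that its last term really computes $\Omega^{\ell}(I(x))$ and hence $\psi(I(x))$.
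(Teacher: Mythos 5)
Your proof is correct, but it takes a genuinely different route from the paper's. The survey obtains the statement as an immediate combination of two cited results: \cite[Theorem 4.4]{IMdist}, which identifies the grade bijection of $KL$ with rowmotion, and Theorem \ref{ARpermequalgradeperm} (from \cite{KMT}), which says that for any Auslander-Gorenstein algebra the Auslander-Reiten permutation equals the grade permutation. You bypass the grade bijection entirely and compute $\psi$ directly: the last term of the minimal projective resolution in Theorem \ref{homologicaldimensionsandausreg}(2) is $\Omega^{\ell}(I(x))$; your combinatorial lemma correctly identifies $\min([m,x]^c)$ with the principal order ideals $\langle p\rangle$ for $p\in\min(P\setminus I_x)$, hence $|\min([m,x]^c)|=|\min(P\setminus I_x)|=|\cov(x)|=\ell$, so the top term collapses to the single indecomposable $P\bigl(\bigvee\min([m,x]^c)\bigr)$; and the join (i.e.\ union) of those principal ideals is precisely the order ideal generated by $\min(P\setminus I_x)$, which is $\row(I_x)$ by definition. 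All three steps check out, and your ``minor point'' about minimality is already covered by the citation, since Theorem \ref{homologicaldimensionsandausreg}(2) asserts that the displayed resolution is minimal. What your route buys: it is self-contained modulo the resolution theorem, it avoids invoking the nontrivial general equality of the Auslander-Reiten and grade permutations, and it exhibits concretely why the terminal syzygy of $I(x)$ is indecomposable, which is exactly what the Auslander-Reiten bijection requires here. What the paper's route buys: brevity given the existing literature, and the fact that the grade-bijection mechanism is the one that generalises beyond incidence algebras of distributive lattices, which is the perspective the survey emphasises throughout.
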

This is a consequence of \cite[Theorem 4.4]{IMdist} and the fact that the Auslander-Reiten permutation coincides with the grade permutation by Theorem \ref{ARpermequalgradeperm}.

Since the classification of Auslander regular algebras among incidence algebras of lattices had such a beautiful answer, the following problem is quite natural:
\begin{problem}
Find a classification of Auslander regular posets.
\end{problem}

The problem is wide open so far, and a classification seems to be much more complicated than in the lattice case.
We list now some partial progress towards this problem. 

Recall that a poset $P$ is \textit{bounded} if it has a unique minimal element and a unique maximal element.
\begin{proposition}\cite[Proposition 5.1]{IMdist}
Let $P$ be a poset. Then $P$ is bounded if and only if $KP$ is 1-Gorenstein.
\end{proposition}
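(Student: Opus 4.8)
The plan is to prove the biconditional by relating the two-sidedness of boundedness to the structure of the minimal injective coresolution of the regular module. Recall that $P$ is bounded means it has a unique minimal element $m$ and a unique maximal element $M$. The key homological input is that $1$-Gorenstein means $\pdim I^0 \leq 0$, i.e. the injective envelope $I^0$ of $A$ in its minimal injective coresolution is projective. So I would translate the combinatorial condition on $P$ into a statement about $I^0$ being projective-injective.

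\medskip

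First I would identify $I^0$ explicitly. In an incidence algebra $KP$, the injective envelope of the regular module is the injective hull of its socle. The socle of ${}_A A$ (or $A_A$) is controlled by the maximal (resp. minimal) elements of $P$: for the right regular module, the indecomposable injective summands of $I^0$ are the $I(x)$ where $S(x)$ appears in $\soc A_A$, and these correspond to maximal elements of $P$. Dually, the indecomposable projectives $P(y)$ that are injective are exactly those where the relevant extremal condition holds. So I would first establish: $P(y)$ is projective-injective if and only if $y$ is comparable to extremal elements in the appropriate way, and more precisely that $P(m)$ is projective-injective precisely when $m$ is the unique minimum (so that $P(m)$ has simple socle $S(M)$ forcing $M$ unique as well).

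\medskip

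The heart of the argument is then the following two directions. For the forward direction, if $P$ is bounded with minimum $m$ and maximum $M$, then $P(m)$ is the projective cover of the simple at $m$ and, because every element lies above $m$ and below $M$, one checks $P(m) \cong I(M)$ is projective-injective and equals $I^0$; writing the minimal injective coresolution $0 \to A \to I^0 \to \cdots$ one verifies $I^0$ is a direct sum of copies of this projective-injective module (using Theorem \ref{homologicaldimensionsandausreg}-style analysis of the socle of $A$), giving $\domdim A \geq 1$ and hence $\pdim I^0 = 0$, i.e. $A$ is $1$-Gorenstein. For the converse, if $KP$ is $1$-Gorenstein then $I^0$ is projective, so the injective hull of $\soc A_A$ is projective-injective; I would argue that the existence of a projective-injective module containing the whole socle forces a unique source and a unique sink in the Hasse quiver, i.e. unique minimal and maximal elements.

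\medskip

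The main obstacle I expect is the converse direction: extracting uniqueness of both extremal elements from the single homological hypothesis that $I^0$ is projective. The subtlety is that $1$-Gorenstein is a left-right symmetric condition (being $k$-Gorenstein is symmetric, as stated after Theorem \ref{AusGorcharacterisation}), so I would exploit the symmetry to get one extremal element from the right module structure and the other from the left. Concretely, $\pdim I^0 = 0$ on the right side controls minimal/maximal elements on one side, and applying the same to $A^{op} = K P^{op}$ gives the dual extremal element. The care needed is in verifying that a projective-injective indecomposable incidence-algebra module forces a \emph{unique} extremal element rather than merely some extremal structure; this is where one uses that in an incidence algebra $P(m)$ is injective exactly when $m$ is a global minimum, which in turn follows from the description of its socle being simple.
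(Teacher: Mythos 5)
Your overall strategy --- translating $1$-Gorenstein into projectivity of $I^0$, identifying the indecomposable summands of $I^0$ as the $I(y)$ with $y$ maximal, and recognising $P(m)\cong I(M)$ when $P$ is bounded --- is the right skeleton, and it is essentially the route taken in \cite{IMdist} (there phrased via the existence of a faithful projective-injective module, i.e. $\domdim KP\geq 1$). Your forward direction is correct and complete in outline: every $P(x)$ has socle $S(M)$, so $I^0$ is a direct sum of copies of $I(M)\cong P(m)$, which is projective. The genuine gap is in the converse, exactly at the point you flag as the main obstacle, and the repair you propose does not close it.

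The claim you rest on is that ``$P(m)$ is injective exactly when $m$ is a global minimum, which follows from the description of its socle being simple.'' Simplicity of $\soc P(x)$ only says there is a unique maximal element above $x$; it says nothing about $x$ sitting at the bottom of all of $P$. What injectivity of $P(x)$ actually gives is an isomorphism $P(x)\cong I(y)$, which (since both modules are thin with all structure maps nonzero) amounts to the equality of supports $\{z\in P \mid z\geq x\}=\{z \in P\mid z\leq y\}$. From this one deduces that $x$ is \emph{a} minimal element and $y$ is \emph{a} maximal element, but not that they are unique: in the disjoint union of two chains, each $P(\min)$ is projective-injective with simple socle, yet there are two minimal and two maximal elements (and indeed $KP$ is then $1$-Gorenstein while $P$ is unbounded, so any correct proof must use connectedness of $P$, which your proposal never invokes). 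The left-right symmetry detour cannot supply this: the identical gap reappears verbatim on the $A^{op}=KP^{op}$ side, since a disjoint union of bounded posets satisfies both the right and the left condition. The missing step is a connectedness argument: if $S:=\{z\mid z\geq x\}=\{z\mid z\leq y\}$, then any $w$ comparable to some $s\in S$ already lies in $S$ (if $w<s\leq y$ then $w\in S$; if $w>s\geq x$ then $w\in S$), so connectedness forces $S=P$, making $x$ the global minimum and $y$ the global maximum simultaneously. With that lemma, the converse follows from a single projective-injective indecomposable summand of $I^0$, and the passage to $A^{op}$ becomes unnecessary.
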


Thus, when we aim for a classification of Auslander regular posets, we can always assume that we are dealing with bounded posets. Recall that the \textit{MacNeille completion} of a poset $P$ is the smallest lattice $L$ such that $P$ embeds into $L$. $P$ is called \textit{dissective} if the MacNeille completion is distributive. We refer, for example, to \cite{Reading} for more on dissective posets.
The following classification of 2-Gorenstein incidence algebras is a work in progress \cite{IKKM}:
\begin{theorem}
Let $P$ be a bounded poset. Then $KP$ is 2-Gorenstein if and only if $P$ is dissective.
\end{theorem}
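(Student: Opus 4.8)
The plan is to reduce the $2$-Gorenstein condition to an explicit combinatorial statement about the minimal injective coresolution of $A = KP$, and then to identify that statement with dissectivity through the theory of MacNeille completions. Since $P$ is bounded, $A$ is already $1$-Gorenstein by the criterion recalled above, so in $0 \to A \to I^0 \to I^1 \to \cdots$ the term $I^0$ is projective-injective and the only remaining requirement is $\operatorname{flatdim} I^1 = \pdim I^1 \le 1$. First I would compute this coresolution summand by summand. Writing $m$ for the global minimum and $M$ for the global maximum, one has $\soc P(x) = S(M)$ for every $x$, so $I^0 = \bigoplus_{x \in P} I(M)$ with $I(M) = P(m)$ projective-injective, and the cokernel of the inclusion $P(x) \hookrightarrow P(m)$ is the representation $C_x$ that is one-dimensional on the order ideal $\{y : y \not\ge x\}$ and zero elsewhere. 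Its socle is $\bigoplus_{w} S(w)$ over the maximal elements $w$ of $\{y : y \not\ge x\}$, whence
$$ I^1 = \bigoplus_{x \in P} \ \bigoplus_{w \in \max\{y\,:\,y \not\ge x\}} I(w). $$
Thus $A$ is $2$-Gorenstein if and only if $\pdim I(w) \le 1$ for every $w$ occurring in this sum.

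Next I would determine $\pdim I(w)$ combinatorially. The projective cover of $I(w)$ is $P(m)$, and the syzygy $\Omega I(w)$ is the subrepresentation of $P(m)$ supported on the filter $\{z : z \not\le w\}$, again with all structure maps the identity. Such a constant representation on a filter $U$ is projective precisely when $U$ is a principal filter: if $U$ had two distinct minimal elements $u_1 \ne u_2$, then, since both lie below $M$, the global maximum would acquire multiplicity $\ge 2$ in any decomposition into indecomposable projectives $P(u_i)$, whereas $\Omega I(w)$ is one-dimensional at $M$ (note $M \not\le w$, as $w=M$ never occurs in the sum above) — a contradiction. Hence $\pdim I(w) \le 1$ if and only if $\{z : z \not\le w\}$ has a unique minimal element. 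Combining the two steps, $A$ is $2$-Gorenstein if and only if, for every $x \in P$ and every maximal element $w$ of $\{y : y \not\ge x\}$, the filter $\{z : z \not\le w\}$ is principal.

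The final and hardest step is to identify this condition with dissectivity of $P$. Here I would pass to the MacNeille completion $L = \overline P$, whose elements are the cuts of $P$. The elements $w$ arising above are exactly the meet-irreducibles of $P$ in the cut-theoretic sense (those admitting a lower bound of their strict upper covers that is not $\le w$), and the principal-filter condition attaches to each such $w$ a well-defined smallest element not below $w$, i.e. a join-irreducible partner; I would show that these pairings force the lower- and upper-bound operators of $P$ to behave distributively on cuts, so that $L$ is distributive, and conversely that if $L$ contains a sublattice isomorphic to $M_3$ or $N_5$ then one can read off a vertex $x$ and a maximal $w \not\ge x$ for which $\{z : z \not\le w\}$ has two minimal elements. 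An attractive alternative I would pursue in parallel is to prove directly that $KP$ is $2$-Gorenstein if and only if $KL$ is $2$-Gorenstein, by comparing the coresolution data across the corner embedding $KP = e\,KL\,e$, and then to invoke the already established lattice results — that $KL$ being $2$-Gorenstein forces $L$ distributive, together with the distributivity criterion — to conclude. In either route the main obstacle is not a single hard estimate but the bookkeeping needed to control the new cut elements of $L \setminus P$ and match them precisely to the maximal-not-above and minimal-not-below elements of $P$; this matching is where the real work lies.
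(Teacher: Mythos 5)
First, a point of comparison: the paper does not actually prove this theorem — it is stated as work in progress and deferred to \cite{IKKM} — so there is no in-paper argument to measure your proposal against, and I can only assess it on its own terms. Your homological reduction is correct and essentially complete. Since $P$ is bounded, $\soc P(x)=S(M)$ for all $x$, so $I^0=I(M)^{\oplus|P|}$ is projective-injective (this recovers the paper's Proposition on 1-Gorensteinness), the cokernel of $P(x)\hookrightarrow I(M)$ is the constant representation on the ideal $\{y: y\not\ge x\}$, and $I^1$ is the sum of $I(w)$ over maximal elements $w$ of these ideals. The syzygy computation $\Omega I(w)=$ constant representation on the filter $\{z: z\not\le w\}$ is right, as is the criterion that such a representation is projective exactly when the filter is principal (the top is $\bigoplus_i S(u_i)$ over the minimal elements $u_i$, so projectivity forces $\dim$ at $M$ to equal the number of minimal elements, which is $1$); your observation that $w\ne M$ guarantees $M$ lies in the filter is exactly the point needed to make this work. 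Your identification of the occurring $w$ as the meet-irreducible elements (those $w$ that are not the greatest lower bound of $\{z: z>w\}$) is also correct. So you have correctly proved: $KP$ is 2-Gorenstein iff every meet-irreducible $w$ has $\{z: z\not\le w\}$ a principal filter.

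The genuine gap is the final step, which you yourself flag as ``where the real work lies.'' As written, ``show that these pairings force the lower- and upper-bound operators of $P$ to behave distributively on cuts'' and ``read off an $M_3$ or $N_5$'' is a plan, not a proof; the precise difficulty is that a forbidden sublattice of the MacNeille completion $L$ may consist of cuts that do not come from elements of $P$, so nothing can be directly read off in $P$. But this missing step is exactly a known theorem of Reading \cite{Reading}, the reference the survey cites when defining dissective posets: the MacNeille completion of a finite poset is distributive if and only if every join-irreducible element $j$ is a dissector, i.e.\ $\{z: z\not\ge j\}$ is a principal order ideal. Applying this to the opposite poset $P^{op}$ and using that MacNeille completion commutes with taking opposites and that distributivity is self-dual, your condition (every meet-irreducible $w$ has $\{z: z\not\le w\}$ a principal filter) is equivalent to dissectivity of $P$. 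So your argument becomes a complete proof once you replace the sketched endgame by this citation; re-deriving Reading's characterisation from scratch, or carrying out your alternative route through the idempotent subalgebra $KP\cong e(KL)e$ (where controlling projective dimensions across the idempotent and the new cut elements is nontrivial), would each require substantial additional work that the proposal does not supply.
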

In \cite{IKKM}, we will also obtain classification results for Auslander regular incidence algebras in some special cases that will allow us to construct large new classes of Auslander regular algebras. Furthermore, we will also present a new approach using relative homological algebra that gives a categorical generalisation of Birkhoff's classical representation theorem.
Finally, let us mention the following result from \cite{IKKM}, which gives a positive answer to Conjecture \ref{conj::AGiffARbijwelldef} for lattices:
\begin{proposition}
Let $A=KL$ be the incidence algebra of a lattice $L$. Then $A$ is Auslander regular (equivalently, $L$ is distributive) if and only if $A$ has a well-defined Auslander-Reiten map that is a bijection.
\end{proposition}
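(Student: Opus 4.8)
The plan is to prove the two implications separately, exploiting that for an incidence algebra $KL$ of a finite lattice $L$ the notions \emph{Auslander regular}, \emph{Auslander-Gorenstein} and \emph{$2$-Gorenstein} all coincide with distributivity of $L$. Indeed, $KL$ has finite global dimension (a standard fact for incidence algebras of finite posets, the Hasse quiver being acyclic), so the Auslander regular and Auslander-Gorenstein properties agree; moreover a finite lattice is bounded, hence $KL$ is always $1$-Gorenstein by \cite[Proposition 5.1]{IMdist}, while $KL$ being $2$-Gorenstein forces $L$ distributive by \cite[Theorem 5.4]{IMdist}, and distributivity conversely yields Auslander regularity by Theorem \ref{homologicaldimensionsandausreg}. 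Thus it suffices to show that $KL$ has a well-defined Auslander-Reiten map which is a bijection if and only if $L$ is distributive.

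For the direction ``Auslander regular $\Rightarrow$ well-defined Auslander-Reiten bijection'' I would invoke the Auslander-Reiten bijection $\psi$ of \cite[Proposition 5.4]{AR} directly. For each indecomposable injective $I(x)$ put $d=\pdim I(x)$, which is finite since $\gldim KL<\infty$; the minimal projective resolution $0\to P_d\to\cdots\to P_0\to I(x)\to 0$ then has last term $P_d=\Omega^{d}(I(x))=\psi(I(x))$, which by that proposition is an \emph{indecomposable} projective module. Hence the Auslander-Reiten map is well-defined, its value at $x$ is exactly $\hat\psi(x)$, and it is a bijection because $\psi$ is. Theorem \ref{homologicaldimensionsandausreg}(2) even exhibits the resolution explicitly, with last term the single projective $P(\bigvee\min([m,x]^c))$.

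The substantial direction is the converse. Assume $KL$ has a well-defined Auslander-Reiten map $\zeta$ that is a bijection; I would aim to deduce that $KL$ is $2$-Gorenstein and then conclude distributivity via \cite[Theorem 5.4]{IMdist}. The key tool is the antichain-module resolution of Theorem \ref{antichainmoduleresolution}, which holds for \emph{every} lattice. First I would identify each indecomposable injective $I(x)$ with the antichain module $M_C=P(m)/N_C$ attached to the antichain $C=\min([m,x]^c)$ of minimal elements not below $x$ (equivalently, work over the opposite lattice $L^{op}$ and dualise). This produces a, in general non-minimal, projective resolution of $I(x)$ whose $r$-th term is $\bigoplus_{S\subseteq C,\,|S|=r}P(\bigvee S)$. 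When $L$ is distributive this resolution is already minimal by Theorem \ref{homologicaldimensionsandausreg}(2), so the top term is the single projective $P(\bigvee C)$, and well-definedness and bijectivity of $\zeta$ follow at once (reproducing the rowmotion permutation). I would then have to run the minimalisation in general and track which terms survive, focusing on the top term.

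The main obstacle is precisely the non-distributive case, where the joins $\bigvee S$ collide, the antichain resolution is far from minimal, and the top term can no longer be read off directly. The heart of the argument will be to show that a failure of distributivity --- witnessed locally by a three-element antichain violating $x_{1}\wedge(x_{2}\vee x_{3})=(x_{1}\wedge x_{2})\vee(x_{1}\wedge x_{3})$, or by an embedded $M_3$ or $N_5$ --- forces either a decomposable last term in the minimal projective resolution of some $I(x)$, so that $\zeta$ is not well-defined, or else a coincidence or omission among the top terms, so that $\zeta$ fails to be injective or surjective. I expect the cleanest way to make this precise is a counting comparison in the Grothendieck group: bijectivity of $\zeta$ constrains the alternating sums $\sum_x(-1)^{r}[P^{(x)}_r]$ of the projective resolutions of the $I(x)$ tightly enough to force $\pdim I^{1}\le 1$ in the minimal injective coresolution $0\to A\to I^{0}\to I^{1}\to\cdots$ of $A$, which is exactly the $2$-Gorenstein condition. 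Controlling this minimalisation uniformly across all non-distributive lattices, rather than on a case-by-case obstruction, is the step I anticipate to be the hardest.
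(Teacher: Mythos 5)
The paper itself contains no proof of this proposition: it is quoted from \cite{IKKM}, which is listed as ``in preparation'', so your attempt can only be judged on its own merits, not against an internal argument. On those merits, your reductions and the forward implication are correct. The Hasse quiver of a finite lattice is acyclic, so $\gldim KL<\infty$ and Auslander regular, Auslander--Gorenstein, $2$-Gorenstein and distributivity of $L$ are indeed all equivalent by \cite[Proposition 5.1]{IMdist}, \cite[Theorem 5.4]{IMdist} and Theorem \ref{homologicaldimensionsandausreg}; and if $A$ is Auslander regular, then \cite[Proposition 5.4]{AR} says precisely that $\Omega^{d}(I)$ with $d=\pdim I$, which is the last term of the minimal projective resolution of $I$, is indecomposable projective and that $I\mapsto\Omega^{d}(I)$ is a bijection, so the Auslander--Reiten map is well defined and bijective. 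Your identification of $I(x)$ with the antichain module attached to $\min([m,x]^c)$ is also valid for an arbitrary lattice.

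The genuine gap is the converse, which is where the entire content of the proposition lies, and there you offer only a declared strategy, as you acknowledge yourself. Moreover, the one concrete mechanism you propose cannot work as stated: in $K_0(A)$ the alternating sum of the terms of \emph{any} projective resolution of $I(x)$ equals $[I(x)]$, independently of whether the resolution is minimal and of what its last term is. Hence bijectivity of $\zeta$ --- a statement about the isomorphism type of the single last term of each minimal resolution --- imposes no constraint whatsoever on those alternating sums, and no $K_0$ counting of this kind can force $\pdim I^1\le 1$. What is actually required is an argument showing that for a non-distributive $L$ the minimalisation of the (highly non-minimal) antichain resolution of Theorem \ref{antichainmoduleresolution} necessarily produces, for some $I(x)$, either a decomposable last term or a collision/omission destroying bijectivity of $\zeta$; controlling that cancellation uniformly is exactly the hard step, it is the substance of \cite{IKKM} (compare the monomial-algebra analogue \cite[Theorem 7.9]{Kla}), and nothing in your proposal carries it out. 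As it stands, you have proved one implication of the proposition and reduced the other to an open plan whose only concrete ingredient fails.
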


\section{The Coxeter permutation as a generalisation of the Auslander-Reiten permutation}
Every Auslander regular algebra has a canonical permutation on its simple modules, namely, the Auslander-Reiten permutation. Our goal was to find a generalisation that assigns such a permutation to every finite-dimensional algebra of finite global dimension. For simplicity, we assume in this section, that our algebras are quiver algebras. Note that this is no real loss of generality when we work over an algebraically closed field $K$, since in that case every $K$-algebra is Morita equivalent to a quiver algebra.
So let $A=KQ/I$ be a connected quiver algebra with $n$ simple modules and enumerate its vertices by $\{1,\ldots,n\}$. Recall that the \textit{Cartan matrix} $\phi_A$ of $A$ is defined as the $n \times n$-matrix with entries $\phi_{i,j}=\dim e_j A e_i$, that is, the number of non-zero paths from $j$ to $i$ in $Q.$ It is well known that when $A$ has finite global dimension, then the determinant of $\phi_A$ is $\pm 1$ and thus $\phi_A $ is invertible over the integers, see \cite{Eil}.
Thus, for algebras $A$ of finite global dimension, we can define the \textit{Coxeter matrix} $C_A$ of $A$ by $C_A=- \phi_A^T \phi_A^{-1}$. The Coxeter matrix is a classical tool in representation theory and can be seen as the linear algebraic version of the Auslander-Reiten translate for hereditary algebras; we refer, for example, to \cite{ASS} for details. 
Recall that the \textit{Bruhat decomposition} of an invertible $n \times n$-matrix $M$ with entries in a field is a factorisation $M= U_1 P U_2$ with $U_1$ and $U_2$ upper triangular matrices and $P$ a permutation matrix. This factorisation always exists, and the permutation matrix $P$ is uniquely determined by $M$, see for example \cite[Chapter 4]{AlpBell}.
Given an $n \times n$ permutation matrix $P$, we define the \textit{row-permutation $p_r$} associated to $P$ as the permutation acting on $\{1,\ldots,n\}$ given by $p_r(i)=j$ if in the $i$-th row of $P$ the first non-zero entry appears in column $j$. Dually, we define the \textit{column-permutation $p_c$} associated to $P$ as the permutation acting on $\{1,\ldots,n\}$ given by $p_c(i)=j$ if in the $i$-th column of $P$ the first non-zero entry is in row $j$.
For an algebra $A$ of finite global dimension, we define the \textit{Coxeter permutation} $p_A$ as the column permutation associated to the permutation matrix $P$ in a Bruhat factorisation of the Coxeter matrix $C$. In general, the Coxeter permutation depends on the ordering of the simple modules of the algebra $A$.
Note that when $A$ is an acyclic quiver algebra, then we can order the simple modules so that the Cartan matrix of $A$ is lower triangular. We will usually assume this for acyclic quiver algebras.
The following lemma is elementary, see \cite{KKM} for a proof:
\begin{lemma}
Let $A$ be an algebra of finite global dimension with a lower triangular Cartan matrix $\phi_A$ and a Bruhat decomposition $\phi_A=\hat{U}_1 Q \hat{U}_2$.
Then the Coxeter permutation $p_A$ coincides with the row-permutation associated with $Q$.
\end{lemma}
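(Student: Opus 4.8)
The plan is to trace the permutation through the algebraic relationship $C_A = -\phi_A^T\phi_A^{-1}$ between the Coxeter matrix and the Cartan matrix, exploiting the basic fact from the theory of Bruhat decomposition that the permutation matrix appearing in a factorisation $M = U_1 P U_2$ is a complete invariant of the double coset $B M B$, where $B$ denotes the group of invertible upper-triangular $n\times n$ matrices over $K$. The key observation is that, since $\phi_A$ is lower triangular and invertible (its determinant is $\pm 1$), the transpose $\phi_A^T$ is upper triangular and invertible, and hence so is $-\phi_A^T$; in other words $-\phi_A^T \in B$.

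First I would rewrite the Coxeter matrix as $C_A = (-\phi_A^T)\,\phi_A^{-1}$ and note that multiplying on the left by the element $-\phi_A^T$ of $B$ leaves the Bruhat double coset unchanged. Thus $C_A$ and $\phi_A^{-1}$ lie in the same double coset, and by uniqueness of the permutation in a Bruhat factorisation they share the same associated permutation matrix $P$.

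Next I would read off the Bruhat permutation of $\phi_A^{-1}$ from the given factorisation $\phi_A = \hat{U}_1 Q \hat{U}_2$. Inverting gives $\phi_A^{-1} = \hat{U}_2^{-1} Q^{-1} \hat{U}_1^{-1}$, which is again a Bruhat factorisation, now with permutation matrix $Q^{-1} = Q^T$. By uniqueness once more, $P = Q^T$.

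Finally it remains a purely combinatorial check that, for permutation matrices, the column-permutation of $Q^T$ agrees with the row-permutation of $Q$: writing $p_c$ for the column-permutation of $P = Q^T$, we have $p_c(i)=j$ exactly when $P_{ji}=1$, i.e. when $Q_{ij}=1$, which is precisely the defining condition for $p_r(Q)(i)=j$. Hence the Coxeter permutation $p_A = p_c = p_r(Q)$, as claimed. The only point requiring care is the bookkeeping in the first two steps—tracking the scalar $-1$ and the transpose while invoking uniqueness of the Bruhat permutation—but no genuine obstacle arises, precisely because the double coset is insensitive to the Borel factors on either side.
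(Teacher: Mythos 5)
Your proof is correct and complete: the double-coset invariance of the Bruhat permutation applied to $C_A=(-\phi_A^T)\,\phi_A^{-1}$ with $-\phi_A^T$ invertible upper triangular (this is exactly where the lower-triangularity hypothesis is used), the inversion of the given factorisation $\phi_A=\hat{U}_1 Q \hat{U}_2$ yielding permutation matrix $Q^{-1}=Q^T$, and the elementary identity that the column-permutation of $Q^T$ equals the row-permutation of $Q$ all check out, and the conclusion matches the paper's worked example (Kupisch series $[3,3,2,1]$). Note that the survey itself gives no proof of this lemma, deferring it to the forthcoming reference \cite{KKM}, so there is no in-paper argument to compare against; your double-coset argument is the natural elementary proof the authors presumably intend.
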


While the Cartan matrix is usually easier to deal with than the Coxeter matrix, experiments suggest that the Bruhat factorisation of the Coxeter matrix carries a deeper meaning compared to the Bruhat factorisation of the Cartan matrix, as we will see later in some special situations.
We give an example:
\begin{example}
Let $L$ be the distributive lattice with Hasse quiver
\[\begin{tikzcd}
	& 5 \\
	3 && 4 \\
	& 2 \\
	& 1
	\arrow[from=2-1, to=1-2]
	\arrow[from=2-3, to=1-2]
	\arrow[from=3-2, to=2-1]
	\arrow[from=3-2, to=2-3]
	\arrow[from=4-2, to=3-2]
\end{tikzcd}\]
and let $A=KL$ be its incidence algebra.
The Cartan matrix $\phi_A$ is given by:
$\phi_A=$
\[
\left[
\begin{array}{ccccc}
1 & 0 & 0 & 0 & 0 \\
1 & 1 & 0 & 0 & 0 \\
1 & 1 & 1 & 0 & 0 \\
1 & 1 & 0 & 1 & 0 \\
1 & 1 & 1 & 1 & 1
\end{array}
\right]
\]
A Bruhat factorisation is given by $\phi_A=$
\[
\left[
\begin{array}{rrrrr}
-1 & 0 & 0 & 0 & 1 \\
0 & 1 & -1 & -1 & 1 \\
0 & 0 & -1 & 0 & 1 \\
0 & 0 & 0 & -1 & 1 \\
0 & 0 & 0 & 0 & 1
\end{array}
\right]
\left[
\begin{array}{rrrrr}
0 & 1 & 0 & 0 & 0 \\
0 & 0 & 0 & 0 & 1 \\
0 & 0 & 0 & 1 & 0 \\
0 & 0 & 1 & 0 & 0 \\
1 & 0 & 0 & 0 & 0
\end{array}
\right]
\left[
\begin{array}{rrrrr}
1 & 1 & 1 & 1 & 1 \\
0 & 1 & 1 & 1 & 1 \\
0 & 0 & 1 & 0 & 1 \\
0 & 0 & 0 & 1 & 1 \\
0 & 0 & 0 & 0 & 1
\end{array}
\right]
\]
From this, we can read off that the Coxeter permutation of $A$ as
\(
\left(
\begin{array}{ccccc}
1 & 2 & 3 & 4 & 5 \\
2 & 5 & 4 & 3 & 1
\end{array}
\right)
\).
\end{example}
In the following, we present several recent results which indicate that the Coxeter permutation is the right generalisation of the Auslander-Reiten permutation.
Let $A$ be an algebra with $n$ simple modules, and label the vertices of its underlying quiver by $\{1,\ldots, n\}$. So the simple $A$-modules are denoted by $S(1),\ldots, S(n)$. We call the ordering of the vertices \textit{admissible} if $\grade S(i) < \grade S(j)$ implies $i>j$. Note that such an ordering always exists: Just order the simple modules in a non-increasing order according to their grades.
\begin{theorem} \cite[Theorem 3.13]{KMT}
Let $A$ be an Auslander regular algebra with an admissible ordering of the simple modules. Then the Coxeter permutation coincides with the Auslander-Reiten permutation.
\end{theorem}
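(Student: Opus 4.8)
The plan is to translate the Bruhat combinatorics of the Coxeter matrix into the homological data computed by the grade bijection $\hat{\phi}$, which coincides with the Auslander-Reiten permutation $\hat{\psi}$ by Theorem \ref{ARpermequalgradeperm}. First I would reinterpret the columns of $C_A=-\phi_A^T\phi_A^{-1}$ homologically. Since the $j$-th column of $\phi_A$ is the class $[P(j)]$ and the $j$-th row is the class $[I(j)]$ in $K_0(\mod A)$, the matrix $\phi_A^T\phi_A^{-1}$ represents the derived Nakayama functor $D\RHom_A(-,A)$ on $K_0$. Hence, for a simple module $S(k)$,
$$C_A\,e_k = -[D\RHom_A(S(k),A)] = \sum_{i\ge g_k}(-1)^{i+1}\big[D\Ext_A^i(S(k),A)\big],$$
where $g_k=\grade S(k)$. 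The lowest-degree term is $(-1)^{g_k+1}[D\Ext_A^{g_k}(S(k),A)]$, whose top is precisely the grade-bijection image $\phi(S(k))=S(\hat{\phi}(k))$.

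Second, I would extract the support constraint from the Auslander-Gorenstein condition. By part (4) of Theorem \ref{AusGorcharacterisation}, every composition factor of $\Ext_A^i(S(k),A)$ has grade (over $A^{op}$) at least $i$; dualising via $\grade_{A^{op}}S^{op}(r)=\cograde_A S(r)$, every composition factor $S(r)$ of $D\Ext_A^i(S(k),A)$ satisfies $\cograde S(r)\ge i$. Consequently $(C_A)_{r,k}\ne 0$ forces $\cograde S(r)\ge\grade S(k)$. Because the ordering is admissible (grades non-increasing in the index) and $\hat{\phi}$ maps the grade-$g$ simples bijectively onto the cograde-$g$ simples, this says exactly that $C_A$ carries the coordinate flag $F^{\ge g}=\langle e_k:\grade S(k)\ge g\rangle$ into the cograde flag $G^{\ge g}=\langle e_r:\cograde S(r)\ge g\rangle$; comparing dimensions (equal, by the bijection) gives $C_A(F^{\ge g})=G^{\ge g}$.

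Third, I would compute the Bruhat permutation from this flag data. Grouping the simples into grade blocks $B_1,\dots,B_m$ (decreasing grade, hence consecutive indices) and cograde blocks $B_t'=\hat{\phi}(B_t)$, the support constraint makes $C_A$ block upper-triangular in the arrangement with rows $B_1',\dots,B_m'$ and columns $B_1,\dots,B_m$; since $\det C_A=\pm1$, each diagonal block $C_A[B_t',B_t]$ is invertible. This already forces the Coxeter column permutation $p_A$ to send each $B_t$ onto $B_t'=\hat{\phi}(B_t)$ setwise. To pin down the permutation inside a block, I would analyse $C_A[B_t',B_t]$, whose $(j,k)$-entry is $\pm[D\Ext_A^{g^{(t)}}(S(k),A):S(j)]$ and whose columns have top $S(\hat{\phi}(k))$; the goal is to show its Bruhat permutation is exactly $\hat{\phi}|_{B_t}$, which would yield $p_A=\hat{\phi}=\hat{\psi}$.

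The hard part will be this last step: invertibility of the diagonal blocks only guarantees that $p_A$ and $\hat{\phi}$ agree block-by-block, and identifying the within-block permutation requires genuine structural input on the modules $D\Ext_A^{g}(S,A)$. Concretely, I expect to need that the cograde-$g$ layer of $D\Ext_A^{g}(S(k),A)$ consists of its top $S(\hat{\phi}(k))$ alone (a grade-purity statement for the top layer), so that $C_A[B_t',B_t]$ is, up to sign and upper-triangular factors, the permutation matrix of $\hat{\phi}|_{B_t}$. Such purity should follow from the biduality behaviour of grade in Auslander-Gorenstein algebras (Theorem \ref{AusGorcharacterisation}(5) together with the results underlying the grade bijection). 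Once it is in place, the rank-profile computation of the previous paragraph gives $\operatorname{rank} C_A[\ge r,\le k]=\#\{j\le k:\hat{\phi}(j)\ge r\}$ for all $r,k$, which is equivalent to $p_A=\hat{\phi}$, and hence to the asserted coincidence of the Coxeter permutation with the Auslander-Reiten permutation.
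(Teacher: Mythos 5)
Your steps 1 and 2 are correct, and the reduction to a rank-profile computation is the right frame: the identity $C_Ae_k=\sum_{i\ge g_k}(-1)^{i+1}[D\Ext_A^i(S(k),A)]$, the support constraint from Theorem \ref{AusGorcharacterisation}(4), and the flag identity $C_A(F^{\ge g})=G^{\ge g}$ all hold, and flag preservation does force the Coxeter permutation to agree with $\hat\phi$ blockwise. (Note that the survey itself contains no proof of this theorem --- it refers to \cite[Section 3]{KMT} --- so your argument has to stand on its own.) The genuine gap is exactly where you flag it: the purity lemma --- that the only cograde-$g_k$ composition factor of $D\Ext_A^{g_k}(S(k),A)$ is its top $S(\hat\phi(k))$ --- is asserted as something that ``should follow'' from Theorem \ref{AusGorcharacterisation}(5), but no argument is given, and characterisation (5) (biduality of monomorphisms) does not obviously yield it. This is not a removable convenience: without purity, the diagonal block $\bar C_g$ is merely an invertible integer matrix which is $(-1)^{g+1}$ times a nonnegative matrix with nonzero entries in the positions prescribed by $\hat\phi$, and such matrices can have the wrong Bruhat permutation; for instance $\begin{pmatrix}2&1\\1&1\end{pmatrix}$ is invertible with nonzero diagonal, yet its Bruhat column permutation is the transposition, not the identity. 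So, as written, the proof is incomplete at its decisive step.

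The lemma is true, and it can be proved with ingredients you already assembled, by exploiting left-right symmetry of the Auslander-Gorenstein property rather than characterisation (5). Since $\phi_{A^{op}}=\phi_A^{T}$, one has $C_{A^{op}}=-\phi_A\phi_A^{-T}=C_A^{-1}$; as $A^{op}$ is again Auslander-Gorenstein, running your steps 1--2 over $A^{op}$ shows that $C_A^{-1}$ satisfies the mirror support constraint ($(C_A^{-1})_{k,r}\ne 0$ forces $\grade S(k)\ge\cograde S(r)$), and that its entries in the extreme layer are again, up to the uniform sign $(-1)^{g+1}$, single composition multiplicities (no cancellation, because all higher $\Ext$ terms land in strictly higher layers). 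Hence the induced maps on the graded pieces, $\bar C_g$ and $(\bar C_g)^{-1}$, are both $(-1)^{g+1}$ times nonnegative integer matrices; a nonnegative matrix whose inverse is nonnegative is necessarily monomial, so $\bar C_g$ is, up to sign, a permutation matrix. That is precisely your purity statement, with multiplicity one for the top as a bonus. With it, your rank-profile argument does close: every off-pivot nonzero entry of a column $j$ sits in the pivot row of some column $i<j$ (admissibility makes higher grade mean smaller index), so $\operatorname{rank}C_A[\ge r,\le k]=\#\{j\le k:\hat\phi(j)\ge r\}$ for all $r,k$, giving $p_A=\hat\phi$, which equals $\hat\psi$ by Theorem \ref{ARpermequalgradeperm}.
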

See \cite[Section 3]{KMT} for a proof and more details. 
However, an admissible ordering might not always be the most natural ordering. For the incidence algebra $KP$ of a poset $P$ with $n$ elements, there exists a class of canonical orderings labeled by $\{1,\ldots,n\}$ called \textit{linear extensions}. These are defined by the condition that if $i \leq_P j$ in $P$ then $i \leq j$, where $\leq_P$ denotes the order in $P$.
In \cite{KMT}, we obtained the following new characterisation of distributive lattices using the Bruhat decomposition of the Coxeter matrix:
\begin{theorem} \cite[Theorem 1.4]{KMT}\label{thm::distributiveCharacterisationViaCoxeterMatrix}
Let $L$ be a lattice with elements ordered according to a linear extension. Let $A=KL$ be the incidence algebra of $L$ with Coxeter matrix $C_A$.
Then $C_A$ has a Bruhat decomposition of the form $C_A=U_1PU_2$ with $U_1$ being the identity matrix if and only if $L$ is Auslander regular. In this case, the Coxeter permutation coincides with the rowmotion bijection on $L$.
\end{theorem}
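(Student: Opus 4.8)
I order the elements of $L$ by a linear extension, so that the Cartan (zeta) matrix is lower unitriangular with $(\phi_A)_{i,j}=[j\le i]$ and hence $\phi_A^{-1}$ is the Möbius matrix $(\phi_A^{-1})_{i,j}=\mu(j,i)$. From $C_A=-\phi_A^T\phi_A^{-1}$ I would first record the entrywise formula, valid for any lattice,
\[(C_A)_{x,y}=-\sum_{i\ge x}[x\le i]\,\mu(y,i)=-\sum_{i\ge x\vee y}\mu(y,i).\]
Next I would prove the elementary matrix fact that governs the whole statement: an invertible $C$ admits a Bruhat decomposition $C=U_1PU_2$ with $U_1=I$ if and only if the leftmost (smallest-column) nonzero entries of the rows of $C$ lie in pairwise distinct columns; and in that case, writing $\tau(x)$ for the column of the leftmost nonzero of row $x$, the map $\tau$ is a permutation, $P$ is its permutation matrix, and the column-permutation of $P$ (i.e.\ the Coxeter permutation) equals $\tau^{-1}$. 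This turns the theorem into the purely combinatorial claim: the leftmost nonzero of row $x$ of $C_A$ sits in column $\row^{-1}(x)$ exactly when $L$ is distributive.

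\textbf{The distributive direction.} Assume $L$ distributive and write $L=J(P)$ with $x\leftrightarrow I_x$ (Birkhoff). Using $\mu_{J(P)}(I_y,I_i)=(-1)^{|I_i\setminus I_y|}$ when $I_i\setminus I_y$ is an antichain and $0$ otherwise, the inner sum telescopes: parametrising the relevant $i$ by subsets of $\min(P\setminus I_y)$ containing $I_x\setminus I_y$, a sign-reversing cancellation shows $(C_A)_{x,y}\neq 0$ \emph{iff} $\min(P\setminus I_y)=I_x\setminus I_y$. I then pin down the leftmost such $y$ in two steps. First, $y=\row^{-1}(x)$ works: for $J=\row(I)$ the generating antichain $\min(P\setminus I)$ equals $\max(J)$, and since everything strictly below a minimal element of $P\setminus I$ lies in $I$, one gets the clean identity $J\setminus I=\min(P\setminus I)$, which is exactly the nonvanishing condition. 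Second, any $y$ with $(C_A)_{x,y}\neq0$ satisfies $\row^{-1}(x)\subseteq I_y$: the condition forces every element of $I_x\setminus I_y$ to be maximal in $I_x$ (a non-maximal such element would, together with a larger element of $I_x\setminus I_y$, contradict minimality in $P\setminus I_y$), whence $P\setminus I_y=\uparrow(I_x\setminus I_y)\subseteq\uparrow\max(I_x)$ and therefore $I_y\supseteq P\setminus\uparrow\max(I_x)=\row^{-1}(I_x)$. As the linear extension refines the lattice order, this makes $\row^{-1}(x)$ the smallest-index nonzero column of row $x$. Hence $\tau=\row^{-1}$ is a bijection, so $U_1=I$, and the Coxeter permutation is $\tau^{-1}=\row$.

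\textbf{The converse and the main obstacle.} For the converse I would argue contrapositively, exploiting the locality of the formula: for a fixed column $y$, the entries $(C_A)_{x,y}=-\sum_{i\ge x\vee y}\mu(y,i)$ depend only on the upper interval $[y,M]$ and its Möbius function. The plan is to locate, inside a non-distributive $L$, an element $y$ that is the bottom of a ``forbidden'' configuration (an $M_3$ or $N_5$ sitting tightly above $y$, so that $\mu(y,-)$ does not dissipate through a chain as it does lower down), and to show that then at least two rows $x\in[y,M]$ have $(C_A)_{x,y}\neq0$ while all their entries in columns $y'<y$ vanish; this produces a collision of leftmost nonzeros and forces $U_1\neq I$. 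I expect this to be the hard part. The difficulty is genuinely combinatorial: the invariant is \emph{not} detected at the global bottom column $y=m$ (I have checked examples where $M_3$ stacked above a chain leaves column $m$ with a single nonzero while the collision appears higher up), forbidden sublattices need not occur as intervals, and one must simultaneously guarantee nonvanishing at column $y$ and vanishing to its left across an arbitrary non-distributive lattice. I would try to tame this either by a reduction to minimal non-distributive intervals together with a Weisner-type evaluation of $\sum_{i\ge x}\mu(y,i)$ on $[y,M]$, or by routing the argument through the cited implication that a non-distributive lattice gives a non-$2$-Gorenstein incidence algebra and translating that homological defect into the required failure of distinctness.
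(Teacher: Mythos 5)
Your linear-algebra reduction and your distributive direction are correct, and essentially optimal. The Bruhat fact (a decomposition with $U_1=I$ exists iff the leftmost nonzero entries of the rows of $C$ occupy pairwise distinct columns, in which case the column-permutation of $P$ is the inverse of the leftmost-nonzero map $\tau$) is right; the entrywise formula $(C_A)_{x,y}=-\sum_{i\ge x\vee y}\mu(y,i)$ is right; and your Möbius computation over $J(P)$ — the cancellation showing $(C_A)_{x,y}\neq 0$ iff $I_x\setminus I_y=\min(P\setminus I_y)$, the verification that $y=\row^{-1}(x)$ satisfies this, and the argument that any other nonzero column of row $x$ lies weakly above $\row^{-1}(x)$ in $L$, hence to its right in any linear extension — is a complete proof that distributivity (equivalently, by the cited results of [IMdist], Auslander regularity of $KL$) implies the existence of a decomposition with $U_1=I$ and that the Coxeter permutation is then the rowmotion bijection.

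The genuine gap is the converse: if some Bruhat decomposition of $C_A$ has $U_1=I$, then $L$ must be distributive. This half of the stated equivalence is not proven in your proposal; what you offer in its place is a plan, and you yourself identify why neither version of it closes. Concretely: (i) the forbidden-configuration route founders on the fact that $M_3$ and $N_5$ sublattices need not occur as intervals, and that even when non-distributivity is witnessed inside an interval above $y$, exhibiting two rows with nonzero entries in column $y$ does not yet produce a collision of leftmost nonzeros — you must additionally force all entries of those two rows in columns strictly to the left of $y$ to vanish, and this depends on the whole lattice, not on $[y,M]$; your own stacked-$M_3$ example shows the collision can migrate away from any fixed column, so a purely local argument cannot suffice without a mechanism for propagating or relocating the failure. (ii) The homological route presupposes a bridge lemma — that failure of $2$-Gorensteinness forces, under a linear-extension ordering, two leftmost nonzero entries into the same column — and that lemma is exactly the content to be established; nothing quoted in the survey supplies it, since the result identifying the Coxeter permutation with the Auslander-Reiten permutation assumes Auslander regularity (and an admissible ordering) and so cannot be applied to a non-Auslander-regular incidence algebra. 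As it stands, you have one implication of the equivalence, plus the identification of the permutation in that case, but not the characterization itself.
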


Note that $U_1$ is the identity matrix in some Bruhat decomposition of $C_A$ if and only if $C_A$ is an upper triangular matrix up to reordering its rows, which is a condition easily checked simply by looking at the matrix $C_A.$ 

In \cite[Question 5.3]{KMT}, we posed the question whether the previous theorem holds more generally for every bounded poset $P$. More precisely, we asked whether for such a poset $P$, $KP$ is Auslander regular if and only if $U_1=id$ in some Bruhat decomposition of $KP$, if we order the vertices of $kP$ according to a linear extension. 

Theorem \ref{thm::distributiveCharacterisationViaCoxeterMatrix} motivated us in \cite{echopaper} to study the Coxeter permutation for incidence algebras of posets in the case when simple modules are ordered using a linear extension. We remark that in \cite{echopaper} we call the (inverse of the) Coxeter permutation in the case of posets \textit{echelonmotion}, but for simplicity, we will stick to the name Coxeter permutation in this article. We call a poset $P$ \textit{echelonmotion-independent} or, for short, \textit{independent} if the Coxeter permutation is independent of the chosen linear extension.  
It is a central problem in dynamical algebraic combinatorics to generalise the rowmotion bijection from distributive lattices to more general classes of posets (see, for example, \cite{Barn}, \cite{ThomasWilliamsindependent}, \cite{ThomasWilliamsslow}). 
This motivates the following problem:
\begin{problem}
Classify the independent posets $P$.
\end{problem}
The paper \cite{echopaper} provides a complete classification for lattices, in which case the answer turns out to be quite beautiful. 
Recall that a lattice $L$ is \textit{meet-semidistributive} if for all $x,y \in L$ such that $x \leq y$, the set $\{z \in L \mid z \land y =x \}$ has a maximum element, and call $L$ \textit{join-semidistributive} if its dual is meet-semidistributive. A lattice $L$ is called \textit{semidistributive} if it is both meet-semidistributive and join-semidistributive.
Naturally, every semidistributive lattice is distributive, but the converse is not true. Here is the smallest non-distributive lattice which happens to be semidistributive:
\[\begin{tikzcd}
	& 5 \\
	3 \\
	2 && 4 \\
	& 1
	\arrow[from=2-1, to=1-2]
	\arrow[from=3-1, to=2-1]
	\arrow[from=3-3, to=1-2]
	\arrow[from=4-2, to=3-1]
	\arrow[from=4-2, to=3-3]
\end{tikzcd}\]
For semidistributive lattices, a natural generalisation of the rowmotion bijection was described in \cite{Barn}. As the definition is slightly complicated, we omit it here and refer to \cite{Barn} and \cite{echopaper} for details. We also refer to the generalised rowmotion bijection for semidistributive lattices simply as rowmotion bijection in the following.
\begin{theorem} \cite[Theorem 1.3]{echopaper}
A lattice $L$ is independent if and only if $L$ is semidistributive. In this case, the Coxeter permutation with respect to a linear extension coincides with the rowmotion bijection.
\end{theorem}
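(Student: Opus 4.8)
The plan is to translate the computation of the Coxeter permutation into a purely combinatorial pivoting procedure on the Coxeter matrix, and then to match this procedure against the canonical join and meet representations that govern rowmotion on semidistributive lattices. I would order the elements of $L$ by a linear extension, so that the Cartan matrix $\phi_A$ is the lower-triangular zeta matrix of $L$ and $C_A = -\phi_A^T \phi_A^{-1}$. By the Lemma on lower-triangular Cartan matrices, the Coxeter permutation is the row-permutation read off from a Bruhat factorisation of $C_A$, equivalently the list of pivot positions obtained by reducing $C_A$ to echelon form; this is the \emph{echelonmotion} description from \cite{echopaper}. Passing from one linear extension to another conjugates $\phi_A$, and hence $C_A$, by a permutation matrix $P_\sigma$ with $\sigma$ order-preserving, so \emph{independence} is exactly the statement that the pivot permutation of $C_A$ is unchanged under all such conjugations.

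For the direction ``$L$ semidistributive $\Rightarrow$ $L$ independent and Coxeter $=$ rowmotion'', I would use the structure theory of semidistributive lattices: every element other than the bottom has a unique canonical join representation by join-irreducibles, and dually a unique canonical meet representation, and the rowmotion bijection of \cite{Barn} is defined precisely by passing between these two labellings. The key technical step is to show that the pivot obtained in row $x$ of $C_A$ is the element $\row(x)$ determined by these canonical representations. Since $\phi_A^{-1}$ is the M\"obius matrix, I would compute the entries of $C_A$ through the M\"obius function of $L$, which for semidistributive lattices is controlled by the canonical join representations, and verify that the leading (echelon) entry in each row falls exactly in the column predicted by rowmotion. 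Because canonical representations are intrinsic invariants of $L$ and do not refer to the chosen linear extension, the resulting permutation is simultaneously seen to equal rowmotion and to be independent of the extension. I expect to be able to bootstrap the distributive case, where Theorem \ref{thm::distributiveCharacterisationViaCoxeterMatrix} already gives $U_1 = \id$ and Coxeter $=$ rowmotion, and to treat the general semidistributive case as a controlled perturbation in which $U_1$ is no longer the identity but the pivot pattern is preserved.

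For the converse ``$L$ independent $\Rightarrow$ $L$ semidistributive'' I would argue contrapositively. If $L$ is not semidistributive, then it fails the meet- or the join-semidistributive law, and this failure produces an explicit local configuration of elements witnessing it. On such a configuration I would exhibit two linear extensions for which the pivoting procedure selects pivots in different columns, so that the Coxeter permutation genuinely depends on the chosen extension and $L$ is not independent. Concretely, the two orderings would differ by a transposition of incomparable elements sitting inside the witnessing configuration, and one checks that this transposition moves a pivot; the smallest non-semidistributive lattices can be used to calibrate the argument.

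The main obstacle I anticipate is the algebraic–combinatorial dictionary in the second paragraph: proving that the Bruhat pivot in each row of $C_A$ lands exactly where the canonical join representation predicts. This requires sufficiently precise control of the M\"obius function of a semidistributive lattice and of how Gaussian elimination on $C_A$ interacts with the partial order, and it is here that semidistributivity (rather than mere finiteness) is essential: without the uniqueness of canonical join representations, the leading entries need not be well defined independently of the extension, which is precisely the mechanism forcing the converse. By comparison, the converse direction should be comparatively routine once a good stock of witnessing configurations is in hand.
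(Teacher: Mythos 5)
This survey does not actually prove the statement; it is imported verbatim as \cite[Theorem 1.3]{echopaper}, so there is no in-paper argument to compare yours against, and your proposal has to stand on its own. As it stands, it does not: it is a plan whose two load-bearing steps are precisely the content of the theorem and are left as assertions. For the direction ``semidistributive $\Rightarrow$ independent with Coxeter $=$ rowmotion'', everything rests on the claim that the echelon pivot of the row of $C_A$ indexed by $x$ lands in the column indexed by Barnard's $\row(x)$. You offer two mechanisms for this, neither of which is an argument. First, ``the M\"obius function of a semidistributive lattice is controlled by the canonical join representations'' is never made precise, and the entries of $C_A=-\phi_A^T\phi_A^{-1}$ are signed sums of M\"obius values, so even a pointwise description of $\mu$ does not by itself locate leading entries after elimination. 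Second, the proposed ``bootstrap'' from the distributive case cannot work as described: by Theorem \ref{thm::distributiveCharacterisationViaCoxeterMatrix}, having $U_1=\id$ in the Bruhat factorisation \emph{characterises} distributivity among lattices, so for a genuinely semidistributive $L$ the very feature that makes the distributive computation clean is lost, and no replacement mechanism is identified; semidistributive lattices (e.g.\ weak order on a symmetric group) are not ``controlled perturbations'' of distributive ones in any sense you specify.

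The converse has the same character. A failure of semidistributivity gives a local configuration, but the pivot pattern of $C_A$ is a global invariant: its entries involve M\"obius values over the whole lattice, and Gaussian elimination propagates rows across the matrix. So ``one checks that this transposition moves a pivot'' is not a local verification one can do on the witnessing configuration; you would need to rule out that rows far from the configuration compensate, and nothing in the proposal addresses this. Two smaller points: the Lemma in the paper identifies the Coxeter permutation with the \emph{row}-permutation in a Bruhat factorisation of the \emph{Cartan} matrix $\phi_A$, whereas the Coxeter permutation is by definition the \emph{column}-permutation for $C_A$; for a permutation matrix the row- and column-permutations are mutually inverse, so your identification is off by an inverse --- exactly the convention issue the survey itself flags --- and any pivot-by-pivot comparison with rowmotion must fix this before it can be checked. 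In short, you have correctly named the relevant objects (echelon pivots, canonical join representations, Barnard's rowmotion, the M\"obius function), but the dictionary between elimination on $C_A$ and canonical representations, which you yourself identify as the main obstacle, is missing, and it is the theorem.
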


Not much is known about general independent posets yet. We cite the following result from \cite{echopaper}, which gives some restrictions:
\begin{theorem} \cite[Theorems 1.6, 1.7 and 1.8]{echopaper}
Let $P$ be an independent poset. Then the following statements hold:
\begin{enumerate}
    \item $P$ is bounded.
    \item The MacNeille completion of $P$ is semidistributive.
    \item The Coxeter permutation of $P$ has no fixed points.
\end{enumerate}
\end{theorem}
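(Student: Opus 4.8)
The common engine I would set up first is a concrete, combinatorial reading of the Coxeter permutation of $KP$. Fix a linear extension, so that the Cartan matrix $\phi_A$ equals the zeta matrix of $P$: it is lower unitriangular with $(\phi_A)_{ij}=1$ exactly when $j\le_P i$. By the Lemma preceding this section, the Coxeter permutation $p_A$ is then the row-permutation of the permutation matrix $Q$ in a Bruhat decomposition $\phi_A=\hat U_1 Q\hat U_2$, and since the Bruhat cell of an invertible matrix is detected by the ranks of its corner submatrices, $p_A$ is the unique permutation $w$ determined by $\#\{k\ge i:\ w(k)\le j\}=\operatorname{rank}\phi_A[\{i,\dots,n\},\{1,\dots,j\}]$ for all $i,j$. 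These right-hand sides are purely order-theoretic: they are ranks of the incidence matrices of $\le_P$ between a top-segment and a bottom-segment of the chosen labelling, so ``$P$ is independent'' becomes the statement that this whole family of corner ranks is invariant under changing the linear extension. For \textbf{Part (1)} I would prove the contrapositive. Suppose $P$ has two distinct minimal elements $a\ne b$ (the case of two maximal elements is dual, using $P\mapsto P^{\mathrm{op}}$ and the left--right symmetry). The feature to exploit is that the row of a minimal element in the zeta matrix is a standard basis vector; placing $a,b$ in the two lowest positions of a linear extension and then swapping their labels changes the bottom-left corner ranks, hence changes $w$ as a permutation of the elements of $P$. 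Already the three-element ``$V$''-poset $a<c>b$ illustrates this: one extension gives $(a\,c)$ fixing $b$, the other gives $(b\,c)$ fixing $a$. So a poset with more than one minimal or maximal element is not independent, dovetailing with the earlier fact that $KP$ is $1$-Gorenstein precisely when $P$ is bounded.

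For \textbf{Part (2)} my plan is to reduce to the already-established lattice classification, which says a lattice is independent if and only if it is semidistributive (and in that case the Coxeter permutation is rowmotion). The bridge I would build is that the corner ranks of the zeta matrix depend only on the cut structure of $P$, hence only on its MacNeille completion $L$: ranks of $\le_P$-incidence submatrices are unchanged when $P$ is completed by cuts. Comparing the Bruhat data along the embedding $P\hookrightarrow L$, independence of $P$ should force the corresponding invariance for $L$, whence $L$ is independent and therefore semidistributive. I expect it to be cleaner to argue the contrapositive: if $L$ fails meet- or join-semidistributivity it contains one of the two standard obstructing sublattices; this obstruction is already witnessed by finitely many cuts coming from a finite subset of $P$, and I would use it to manufacture two linear extensions of $P$ realising different Coxeter permutations.

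For \textbf{Part (3)}, with Parts (1) and (2) in hand, I would translate a fixed point $w(i)=i$ into a diagonal increment of the corner ranks at position $i$ and rule it out. The most transparent route is to transport the question to the semidistributive lattice $L$ of Part (2): by the lattice case, the Coxeter permutation of $KL$ coincides with rowmotion on $L$, which is fixed-point free (already in the distributive case, where the maximum is sent to the minimum and the cyclic behaviour precludes fixed points), and I would show that the Coxeter permutation of $KP$ on the elements of $P$ inherits this through the cut-rank comparison of Part (2). The constraint $\det C_A=\pm 1$ can serve as a consistency cross-check.

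The crux is Part (2). The Coxeter permutation is built from $KP$ together with a choice of linear extension, whereas semidistributivity is a property of the completion $L$; bridging the two rests on the nontrivial claim that the Bruhat rank data of the zeta matrix is governed entirely by the cuts of $P$ (equivalently by $L$), and on matching ``extension-dependence of $p_A$'' precisely with the forbidden-sublattice description of semidistributivity so that the lattice classification applies. Making this cut-invariance rigorous is the technical heart of the argument; Parts (1) and (3) are comparatively routine once the corner-rank description of the Coxeter permutation is established.
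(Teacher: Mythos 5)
Your proposal has a fatal gap at what you yourself identify as its crux, Part (2), and the obstruction is recorded in this very survey. Your bridge is the claim that the corner ranks of $\phi_{KP}$, hence the whole Bruhat datum, depend only on the cut structure of $P$, i.e.\ only on the MacNeille completion $L$. Any transfer principle of this kind is symmetric: it would make independence of $P$ equivalent to independence of $L$, which by the lattice classification (\cite[Theorem 1.3]{echopaper}) is equivalent to semidistributivity of $L$. But the implication ``$L$ semidistributive $\Rightarrow$ $P$ independent'' is false: as stated in the survey immediately after the theorem you are proving, there exist bounded posets that are not independent although their MacNeille completion is even distributive (see \cite[Figure 3]{echopaper}). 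So your proposed cut-invariance proves too much and cannot hold; only the asymmetric implication (independence of $P$ forces semidistributivity of $L$) is true, and it cannot follow from an equality of rank data --- note also that $\phi_{KP}$ and $\phi_{KL}$ have different sizes, so the asserted rank equality does not even parse without a dictionary you never supply. (Minor: semidistributivity is characterised by excluding $M_3$ together with the further lattices $L_1,\dots,L_5$ as sublattices; the ``two standard obstructions'' $M_3$ and $N_5$ characterise distributivity.) Part (3) is routed entirely through this bridge, and additionally needs an unstated compatibility --- e.g.\ that a fixed point of the Coxeter permutation of $P$ produces a fixed point of Barnard's rowmotion on $L$; since $P$ is not even stable under that rowmotion, this is not a restriction argument --- so it falls together with Part (2).

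The remaining steps also have gaps. Your opening translation, ``$P$ is independent iff the family of corner ranks is invariant under change of linear extension,'' conflates positions with elements: corner ranks are attached to positions, and under relabelling the position permutation is expected to change; independence says the induced permutation of the \emph{elements} of $P$ is constant, i.e.\ the rank data is equivariant, not invariant. In Part (1), the general claim that swapping two minimal elements in the two lowest positions always changes the element permutation is only verified on the three-element example; as a matrix-level implication (``a minimal row is a standard basis vector, so the swap changes the southwest ranks'') it is false: for the two-element antichain one has $\phi_A=I_2$, $C_A=-I_2$, the swap changes nothing, and both extensions give the identity permutation. That poset is excluded only by this section's standing connectedness convention, which your argument never invokes, so connectivity must enter in an essential way that you do not provide. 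In sum, all three parts reduce to precisely the statements of \cite[Theorems 1.6, 1.7, 1.8]{echopaper} rather than proving them (the survey itself gives no proof, only this citation), and the one concrete mechanism you propose --- rank transfer to the MacNeille completion --- is refuted by known examples; any correct argument must instead analyse directly how the Bruhat/echelon data changes when one linear extension of $P$ is replaced by another.
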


The following figure illustrates an example of an independent poset that is not a lattice (this is the strong Bruhat order of the symmetric group with three elements):
\[\begin{tikzcd}
	& 6 \\
	4 && 5 \\
	2 && 3 \\
	& 1
	\arrow[from=2-1, to=1-2]
	\arrow[from=2-3, to=1-2]
	\arrow[from=3-1, to=2-1]
	\arrow[from=3-1, to=2-3]
	\arrow[from=3-3, to=2-1]
	\arrow[from=3-3, to=2-3]
	\arrow[from=4-2, to=3-1]
	\arrow[from=4-2, to=3-3]
\end{tikzcd}\]
There are bounded posets $P$ that are not independent but whose MacNeille completion is not only semidistributive, but also distributive. We refer, for example, to \cite[Figure 3]{echopaper}.

In \cite[Conjecture 8.2]{echopaper}, we posed the following conjecture that would show that posets with Auslander regular incidence algebras are independent:
\begin{conjecture}
Let $K$ be a field of characteristic 0.
Let $P$ be a poset such that the incidence algebra $KP$ is Auslander regular. 
Then $P$ is independent and the Coxeter permutation coincides with the Auslander-Reiten permutation.
\end{conjecture}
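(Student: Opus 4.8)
The key observation is that the Auslander-Reiten permutation $\hat\psi$ is an intrinsic invariant of the Auslander regular algebra $KP$ and does not depend on any ordering of the simple modules, whereas the Coxeter permutation $p_{KP}$ a priori does. By \cite[Theorem 3.13]{KMT}, $\hat\psi$ coincides with the Coxeter permutation computed with respect to any \emph{admissible} ordering of the simples. Hence the entire conjecture reduces to the single claim that, for \emph{every} linear extension of $P$, the Coxeter permutation $p_{KP}$ equals $\hat\psi$. Indeed, once this is known, $p_{KP}$ is the same (namely $\hat\psi$) for all linear extensions, which is precisely the independence of $P$, and it coincides with the Auslander-Reiten permutation as required. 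So the whole problem is to pass from \emph{one} admissible ordering, where \cite[Theorem 3.13]{KMT} applies, to \emph{all} linear extensions.

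Before attacking this, I would record the structural consequences of Auslander regularity that pin $P$ down. Since $KP$ is in particular $1$-Gorenstein, $P$ is bounded by \cite[Proposition 5.1]{IMdist}; since it is in particular $2$-Gorenstein, the classification announced in \cite{IKKM} forces $P$ to be dissective, so that its MacNeille completion $\hat P$ is a distributive lattice. For $\hat P$ the situation is completely understood: $K\hat P$ is Auslander regular with the explicit resolutions of Theorem \ref{homologicaldimensionsandausreg}, and by Theorem \ref{thm::distributiveCharacterisationViaCoxeterMatrix} its Coxeter permutation equals the rowmotion bijection, hence $\hat\psi$, for every linear extension. This both verifies the necessary conditions for independence from \cite[Theorems 1.6, 1.7 and 1.8]{echopaper} and suggests that $P$ should be controlled through the distributive lattice $\hat P$ in which it embeds.

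The technical core is to prove $p_{KP}=\hat\psi$ for a fixed linear extension by interpolating toward an admissible ordering. Any two orderings of the $n$ simples are connected by a sequence of adjacent transpositions $(i,i+1)$, and relabelling by such a transposition conjugates the Coxeter matrix $C_{KP}=-\phi_{KP}^{T}\phi_{KP}^{-1}$ by a permutation matrix. The subtlety, and the reason $p_{KP}$ genuinely depends on the ordering, is that conjugation by a permutation matrix does not preserve upper triangularity, so the permutation factor in the Bruhat decomposition does not simply conjugate; one must track how the leading nonzero entries actually move. The plan is to prove an \emph{invariance lemma}: using the grade stratification of the simple modules (characterisations (2) and (4) of Theorem \ref{AusGorcharacterisation}, together with the identification of the grades of the simples with the dominant numbers), show that the Auslander regular hypothesis forces the leading entries of $C_{KP}$ to migrate so that the column permutation is unchanged whenever the transposed pair is suitably compatible, and then check that enough such invariant swaps are available to move the given linear extension to an admissible ordering.

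\textbf{Main obstacle.} The heart of the difficulty is exactly this invariance lemma, i.e. controlling the Bruhat (LU-type) pivot structure of the Coxeter matrix under reordering. Concretely, I expect the decisive step to be showing that for Auslander regular $KP$ the Coxeter matrix, in any linear extension, admits a Bruhat decomposition with $U_1=\id$, the affirmative direction under the stronger Auslander regular hypothesis of the open question \cite[Question 5.3]{KMT}. Once $C_{KP}$ is upper triangular up to a permutation of its rows, the Coxeter permutation can be read off directly as the associated row-permutation, its independence of the linear extension becomes manifest, and it can be matched with rowmotion on $\hat P$ and hence with $\hat\psi$. Two points will require care: an admissible ordering need not itself be a linear extension, so the interpolation must pass through orderings outside the set of linear extensions, forcing the swap analysis to be carried out in full generality rather than only within linear extensions; and the characteristic-zero hypothesis must be used to guarantee that $\det\phi_{KP}=\pm1$ and the finiteness of $\gldim KP$ are insensitive to the field, so that the combinatorial pivot computation is valid.
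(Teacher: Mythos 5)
The statement you were asked to prove is not proved in the paper at all: it is \cite[Conjecture 8.2]{echopaper}, stated there and here as an open conjecture, verified only computationally for posets with at most~10 elements. So there is no ``paper proof'' to match, and any correct argument would be new mathematics. Your proposal, by its own admission, is a plan rather than a proof, and the place where it stops is exactly the open core of the problem. Your opening reduction --- that the conjecture is equivalent to showing $p_{KP}=\hat\psi$ for \emph{every} linear extension --- is correct but is essentially a restatement of the conjecture, since independence plus agreement with $\hat\psi$ is literally what is being asserted. The ``invariance lemma'' you then pose (that Auslander regularity forces the pivot structure of $C_{KP}$ to produce the same column permutation under the relevant reorderings, equivalently that $C_{KP}$ admits a Bruhat decomposition with $U_1=\id$ in every linear extension) is precisely \cite[Question 5.3]{KMT}, which the survey records as open; no mechanism is offered for it, and the difficulty you correctly identify --- that conjugating by a permutation matrix does not act in any controlled way on Bruhat factors --- is the whole problem, not a technicality to be checked later. \cite[Theorem 3.13]{KMT} only applies to admissible orderings, and as you note an admissible ordering need not be a linear extension, so that theorem gives no foothold on even a single linear extension.

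Two further steps in your plan would also fail as stated. First, the proposed control of $P$ through its MacNeille completion $\hat P$ cannot work at the level of Coxeter matrices: $P$ and $\hat P$ have different numbers of elements in general, so $C_{KP}$ and $C_{K\hat P}$ are matrices of different sizes, and neither the paper nor \cite{IMdist} provides any comparison map between them; moreover distributivity of $\hat P$ (dissectivity of $P$) is only a \emph{necessary} condition coming from $2$-Gorensteinness, far weaker than the full Auslander regular hypothesis you need to exploit. Second, your use of the characteristic-zero hypothesis is misplaced: $\det\phi_{KP}=\pm1$ follows from finite global dimension over \emph{any} field by \cite{Eil}, and the Bruhat computation is characteristic-free; the hypothesis appears in the conjecture because Auslander regularity of $KP$ itself may depend on the field, not because the pivot combinatorics do. In short, the proposal correctly maps the terrain and names the right obstacle, but it proves nothing beyond what is already known, and the missing lemma is the open question itself.
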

We note that in the previous conjecture, whether the Coxeter permutation coincides with the Auslander-Reiten permutation or its inverse depends on convetions and in \cite{echopaper} we used opposite conventions to those in this survey.
This conjecture is verified for posets with at most 10 elements. We refer to \cite{echopaper} for more interesting problems and conjectures related to the Coxeter permutation.
Now we move from incidence algebras to Nakayama algebras.
In \cite{Ringel}, Ringel found a mysterious bijection on the simple modules of Nakayama algebras while studying the finitistic dimension of these algebras. For simplicity, we stick to connected quiver algebras.
Recall that a \textit{Nakayama algebra} is a quiver algebra $KQ/I$ with admissible relations $I$ and a quiver of the form (linear case)
\[\begin{tikzcd}
	1 & 2 & 3 & {n-1} & n
	\arrow[from=1-1, to=1-2]
	\arrow[from=1-2, to=1-3]
	\arrow[dashed, from=1-3, to=1-4]
	\arrow[from=1-4, to=1-5]
\end{tikzcd}\]
or of the form (cyclic case):
\[\begin{tikzcd}
	& 1 & 2 \\
	n &&& m \\
	{m+4} &&& {m+1} \\
	& {m+3} & {m+2}
	\arrow[from=1-2, to=1-3]
	\arrow[dashed, from=1-3, to=2-4]
	\arrow[from=2-1, to=1-2]
	\arrow[from=2-4, to=3-4]
	\arrow[dashed, from=3-1, to=2-1]
	\arrow[from=3-4, to=4-3]
	\arrow[from=4-2, to=3-1]
	\arrow[from=4-3, to=4-2]
\end{tikzcd}\]
We refer, for example, to \cite[chapter V]{ASS} for more details on Nakayama algebras.
The \textit{Kupisch series} of a Nakayama algebra is the sequence $[c_1,\ldots,c_n]$, with $c_i=\dim e_i A$ the vector space dimension of the $i$-th indecomposable projective $A$-module. A Nakayama algebra is uniquely determined up to isomorphism by its Kupisch series. 
In \cite{Ringel}, Ringel defined a bijection $h$ on the simple modules $S(1),\ldots,S(n)$ corresponding to the vertices $1,\ldots,n$ of a Nakayama algebra as follows:
Denote the injective envelope of a module $M$ by $I(M)$, and for a simple module $S$ set $e(S):= \min \{ \pdim S , \pdim I(S) \}$ and $h(S):=\top \Omega^{e(S)}(NS)$, where $NS=I(S)$ if $e(S)$ is even and $NS=S$ if $e(S)$ is odd. We call $h$ \textit{the homological bijection} of the Nakayama algebra $A$. 
We define the \textit{homological permutation} of $A$ as the map $\hat{h}: \{1,\ldots,n\} \rightarrow \{1,\ldots,n\}$ with $\hat{h}(i)=j$ if and only if $h(S(i))=S(j)$.
We refer to \cite[Section 4]{Ringel} for more details and a proof that $h$ is indeed a bijection.

Recall that a \textit{Dyck path} of semilength $n$ is a lattice path in the grid $\mathbb{Z}^2$ starting at $(0,0)$ and ending at $(2n-2,0)$ with allowed steps being diagonal up steps $U=(1,1)$ and diagonal down steps $D=(1,-1)$ such that this lattice path never goes below the $x$-axis. Dyck paths of semilength $n+1$ are enumerated by the Catalan numbers $C_n=\frac{1}{n+1} \binom{2n}{n}$.
Nakayama algebras with a linear quiver (shortly called linear Nakayama algebras in the following) with $n$ simple modules are in natural bijection to Dyck paths of semilength $n$ via the top boundary of their Auslander-Reiten quiver, we refer for example to \cite{MRS} and \cite{KMMRS} for details and applications of this correspondence. 
Here we only present one example:
\begin{example}
\label{ex::Nakayama}
Let $A=KQ/I$ be the Nakayama algebra with quiver $Q$ 
\[\begin{tikzcd}
	1 & 2 & 3 & 4
	\arrow["a", from=1-1, to=1-2]
	\arrow["b", from=1-2, to=1-3]
	\arrow["c", from=1-3, to=1-4]
\end{tikzcd}\]
and relations $I=\langle abc \rangle.$
The Kupisch series of $A$ is given by $[3,3,2,1]$ and the Auslander-Reiten quiver looks as follows with the top boundary Dyck path marked in red:
\[\begin{tikzcd}
	&& \bullet && \bullet \\
	& \bullet && \bullet && \bullet \\
	\bullet && \bullet && \bullet && \bullet
	\arrow[color={rgb,255:red,214;green,92;blue,92}, from=1-3, to=2-4]
	\arrow[color={rgb,255:red,214;green,92;blue,92}, from=1-5, to=2-6]
	\arrow[color={rgb,255:red,214;green,92;blue,92}, from=2-2, to=1-3]
	\arrow[from=2-2, to=3-3]
	\arrow[color={rgb,255:red,214;green,92;blue,92}, from=2-4, to=1-5]
	\arrow[from=2-4, to=3-5]
	\arrow[color={rgb,255:red,214;green,92;blue,92}, from=2-6, to=3-7]
	\arrow[color={rgb,255:red,214;green,92;blue,92}, from=3-1, to=2-2]
	\arrow[from=3-3, to=2-4]
	\arrow[from=3-5, to=2-6]
\end{tikzcd}\]

\end{example}

Thus, identifying linear Nakayama algebras with their corresponding Dyck paths allows us to interpret Ringel's homological permutation as a map from the set of Dyck paths to permutations. Ringel has shown in \cite[Section 4.5]{Ringel} that this map is injective.

After discovering the homological bijection for Dyck paths, Ringel posed the natural question to several experts whether there is an elementary, more combinatorial interpretation of this bijection, at least in the linear case.
We remark that there is a unique canonical choice to order the vertices of a linear Nakayama algebra, namely, label the vertices from $1$ to $n$ so that there is arrow from $i$ to $j$ if and only if $j=i+1$, as depicted in the linear quiver above.
We will always use this ordering, which allows us to speak of a unique Coxeter permutation associated to linear Nakayama algebras.
We give the following answer to Ringel's question in forthcoming work \cite{KKM}:
\begin{theorem}\label{thm::CoxeterNakayama}
Let $A$ be a linear Nakayama algebra.
Then the Coxeter permutation of $A$ coincides with Ringel's homological permutation.
\end{theorem}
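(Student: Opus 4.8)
The plan is to reduce everything to the Cartan matrix via the preceding Lemma and then to match two explicit combinatorial rules, both read off the Kupisch series. First I would fix the canonical enumeration $1\to 2\to\cdots\to n$ of the linear Nakayama algebra $A$ with Kupisch series $[c_1,\dots,c_n]$. Every non-zero path increases the vertex label, so the Cartan matrix $\phi_A$ is lower triangular with ones on the diagonal; its $j$-th column is the dimension vector of $P(j)$, supported exactly on the consecutive rows $j,j+1,\dots,j+c_j-1$, while its $i$-th row is the dimension vector of the injective $I(i)$. Since $A$ has finite global dimension and $\phi_A$ is lower triangular, the Lemma identifying the Coxeter permutation with the row-permutation of the permutation matrix $Q$ in a Bruhat decomposition $\phi_A=\hat U_1 Q\hat U_2$ applies directly, so it suffices to compute this row-permutation.

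Next I would determine $Q$ explicitly. Writing $\rho(i,j)$ for the rank of the submatrix of $\phi_A$ on rows $\ge i$ and columns $\le j$, the entries of $Q$ are the second differences $\rho(i,j)-\rho(i+1,j)-\rho(i,j-1)+\rho(i+1,j-1)$, and a short monotonicity and telescoping argument shows that the row-permutation $p_r$ admits the following \emph{pivot description}: $p_r(i)$ is the least column index $j$ at which the truncation of row $i$ to its first $j$ entries escapes the span of the corresponding truncations of rows $i+1,\dots,n$. Because each column of $\phi_A$ is the indicator of a consecutive interval, every such rank is an elementary interval/incidence count, and $p_r$ becomes a closed combinatorial rule in the numbers $c_i$ (equivalently, in the associated Dyck path). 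As a check, for Example~\ref{ex::Nakayama} with Kupisch series $[3,3,2,1]$ this procedure yields $p_r=(1\,4\,2\,3)$.

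In parallel I would make Ringel's homological bijection $\hat h$ equally explicit. Over a Nakayama algebra every module is uniserial, and for a uniserial module with top $S(a)$ and length $\ell<c_a$ the syzygy is again uniserial, with top $S(a+\ell)$ and length $c_a-\ell$; iterating this index-shift computes $\top\Omega^{m}$ of any starting module. The projective dimensions $\pdim S(i)$ and $\pdim I(i)$, hence the exponent $e(i)=\min\{\pdim S(i),\pdim I(i)\}$ and the parity-dependent choice between $S(i)$ and $I(i)$ in Ringel's recipe, are themselves read off the same Kupisch data. This gives a second closed rule for $\hat h$; on the same Example~\ref{ex::Nakayama} it returns $\hat h=(1\,4\,2\,3)$, agreeing with the Coxeter computation.

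The heart of the argument, and the step I expect to be the main obstacle, is proving that these two rules coincide for every Kupisch series: that the escape column $p_r(i)$ equals the index of the simple top of $\Omega^{e(i)}$ of Ringel's chosen module. The difficulty is that the two descriptions organise the same data in genuinely different ways — the Coxeter side through spans of truncated injective dimension vectors, the Ringel side through an iterated syzygy walk governed by a minimum of two projective dimensions together with an even/odd switch between $S(i)$ and $I(i)$. I would attempt the matching by induction on the Dyck path, simplifying the Kupisch series one step at a time while tracking how both the pivot positions and Ringel's syzygy walk transform, and, if a clean induction proves elusive, by identifying the two closed forms directly. The key identity to pin down is that the minimal syzygy length $e(i)$ chosen by Ringel is exactly the truncation length at which row $i$ first becomes independent of the lower rows, and that the parity switch corresponds to whether this escape is detected in an injective or a projective position of $\phi_A$.
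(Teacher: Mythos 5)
Your setup is correct and almost certainly the intended starting point: the paper itself contains no proof of this theorem (it defers to the forthcoming work \cite{KKM}), but the Lemma it quotes from \cite{KKM} --- that for an algebra of finite global dimension with lower triangular Cartan matrix the Coxeter permutation equals the row-permutation of $Q$ in a Bruhat decomposition $\phi_A=\hat{U}_1 Q \hat{U}_2$ --- is exactly your first reduction, and your subsequent ingredients are sound. The rank description $Q_{ij}=\rho(i,j)-\rho(i+1,j)-\rho(i,j-1)+\rho(i+1,j-1)$, with $\rho$ taken over submatrices on rows $\geq i$ and columns $\leq j$, is the standard characterisation of the permutation in this variant of the Bruhat decomposition; your pivot/escape reformulation follows from the monotonicity of $\rho(i,j)-\rho(i+1,j)$ in $j$; the interval structure of the columns of $\phi_A$ read off the Kupisch series is right; and the uniserial syzygy formula (top $S(a+\ell)$, length $c_a-\ell$) is correct. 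Your check on the Kupisch series $[3,3,2,1]$ agrees with the paper's Example.

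However, there is a genuine gap, and you name it yourself: the entire content of the theorem is the assertion that the pivot rule extracted from $\phi_A$ coincides, for \emph{every} Kupisch series, with Ringel's rule $i\mapsto \top\Omega^{e(i)}(NS(i))$, and this step is never carried out. What you offer in its place is a research plan ("I would attempt the matching by induction on the Dyck path\dots if a clean induction proves elusive, by identifying the two closed forms directly"), with no induction step formulated, no description of how the pivot positions and the syzygy walk transform under whatever Dyck-path reduction you intend, and no argument for the "key identity" that the escape column of row $i$ equals the endpoint of Ringel's walk. That identity is precisely where the difficulty concentrates: Ringel's rule involves a minimum of two projective dimensions and an even/odd switch between $S(i)$ and $I(i)$, and nothing in the proposal explains why a single rank condition on $\phi_A$ detects either the value $e(i)=\min\{\pdim S(i),\pdim I(i)\}$ or its parity. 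As it stands, the proposal correctly reduces the theorem to an equivalent combinatorial statement and verifies that statement in one example; it does not prove it.
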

We illustrate this in the following example:
\begin{example}
Let $A$ again be the linear Nakayama algebra with Kupisch series [3,3,2,1] from Example \ref{ex::Nakayama}.
\begin{enumerate}[\rm(1)]
    \item The first simple module $S(1)$ coincides with its injective envelope $I(1)$, $e(S(1))=\pdim S(1)=2$ is even, and $h(S(1))=\top \Omega^2(S(1))=S(4)$.
    \item The second simple module $S(2)$ has projective dimension 1 and its injective envelope $I(2)$ has projective dimension 2. Thus, $e(S(2))=1$ is odd and $h(S(2))=\top(\Omega^1(S(2)))=S(3).$
    \item The injective envelope of the third simple module $S(3)$ is projective and thus $e(S(3))=0$ is even and $h(S(3))=\top(I(3))=S(1)$.
    \item The simple module $S(4)$ is projective and thus $e(S(4))=0$ is even and $h(S(4))=\top(I(4))=S(2)$.
\end{enumerate}

Therefore, Ringel's homological permutation for $A$ is given by 
\[
\begin{pmatrix}
1 & 2 & 3 & 4 \\
4 & 3 & 1 & 2
\end{pmatrix}
\]
The Cartan matrix $\phi_A$ of $A$ has the form 
\[
\left[
\begin{array}{cccc}
1 & 0 & 0 & 0 \\
1 & 1 & 0 & 0 \\
1 & 1 & 1 & 0 \\
0 & 1 & 1 & 1
\end{array}
\right]
\]
and a Bruhat factorisation of $\phi_A$ is given by 
\[
\left[
\begin{array}{cccc}
1 & 0 & 1 & -1 \\
0 & -1 & 1 & 0 \\
0 & 0 & 1 & 0 \\
0 & 0 & 0 & 1
\end{array}
\right]
\cdot
\left[
\begin{array}{cccc}
0 & 0 & 0 & 1 \\
0 & 0 & 1 & 0 \\
1 & 0 & 0 & 0 \\
0 & 1 & 0 & 0
\end{array}
\right]
\cdot
\left[
\begin{array}{cccc}
1 & 1 & 1 & 0 \\
0 & 1 & 1 & 1 \\
0 & 0 & 1 & 0 \\
0 & 0 & 0 & 1
\end{array}
\right]
\]
From this, we can conclude that the Coxeter permutation coincides with Ringel's homological permutation in this example, as expected from Theorem \ref{thm::CoxeterNakayama}.

\end{example}

Similar to the case of incidence algebras of distributive lattices, we can use the Bruhat decomposition of the Coxeter matrix of a linear Nakayama algebra to decide whether this algebra is Auslander regular:
\begin{theorem}\label{thm::RingelsBij=CoxeterPerm}
Let $A$ be a linear Nakayama algebra.
Then $A$ is Auslander regular if and only if the Coxeter matrix $C_A$ has a Bruhat decomposition $C_A=U_1 P U_2$ with $U_1$ being the identity matrix.
\end{theorem}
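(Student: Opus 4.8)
The plan is to reduce the statement to a concrete condition on the entries of $C_A$ and then read that condition homologically. Since the quiver of a linear Nakayama algebra is acyclic, $A$ has finite global dimension, so $C_A$ is defined and here ``Auslander regular'' coincides with ``Auslander-Gorenstein''. By the remark following Theorem~\ref{thm::distributiveCharacterisationViaCoxeterMatrix}, the existence of a Bruhat decomposition $C_A=U_1PU_2$ with $U_1$ the identity is equivalent to $C_A$ being upper triangular after a permutation of its rows. For an invertible matrix this holds exactly when the first non-zero entries of the $n$ rows occur in $n$ distinct columns: sorting the rows by the column of their leading entry then produces an upper triangular matrix, and conversely. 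Thus the theorem becomes the assertion that the leading entries of the rows of $C_A$ lie in distinct columns if and only if $A$ is Auslander-Gorenstein, and in that case the map sending $i$ to the leading column of row $i$ is the Coxeter permutation, which by Theorem~\ref{thm::CoxeterNakayama} should agree with Ringel's homological permutation — a useful consistency check.

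The next step is a homological formula for the entries. Writing $C_A=-\phi_A^T\phi_A^{-1}$ and using that $\phi_A^{-1}$ is the matrix of the Euler form on simples, one obtains $(C_A)_{ij}=-\sum_{m\ge 0}(-1)^m\dim_K\Ext_A^m(S(j),P(i))$, i.e. $(C_A)_{ij}$ is the negative Euler characteristic of $\Ext_A^\bullet(S(j),P(i))$. One checks this on small Kupisch series such as $[2,2,1]$ and $[3,3,2,1]$, where it reproduces the matrices correctly and already detects that $[3,3,2,1]$ is not Auslander regular, its rows $3$ and $4$ sharing a leading column. A key structural simplification is available: because the minimal projective resolution of a simple module over a Nakayama algebra consists of indecomposable projectives and $P(i)$ is uniserial, each $\Ext_A^m(S(j),P(i))$ is at most one-dimensional, so $(C_A)_{ij}$ is an alternating sum of $0$'s and $\pm1$'s. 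The leading entry of row $i$ therefore sits in the least column $j$ for which this alternating sum is non-zero. I will run this in parallel with the dual, column-wise reading $C_A[S(j)]=\pm\sum_m(-1)^m\ul{\dim}\,D\Ext_A^m(S(j),A)$, whose lowest non-vanishing term occurs in degree $\grade S(j)$ and has top the grade-bijection value $\phi(S(j))$.

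For the forward implication I would assume $A$ Auslander-Gorenstein and use characterisation~(2) of Theorem~\ref{AusGorcharacterisation}, namely $\pdim I=\grade\soc I$ for every indecomposable injective, together with the grade bijection, to pin down the least $j$ with $(C_A)_{ij}\neq 0$ for each $i$ and to verify that these columns are pairwise distinct. The decisive point is non-cancellation: although each $\Ext$ group is $0$- or $1$-dimensional, the alternating sum in $(C_A)_{ij}$ could still vanish in the expected leading column through a coincidence of a $+1$ and a $-1$ in different degrees. Conditions~(3) and~(4) of Theorem~\ref{AusGorcharacterisation} (submodules of $\Ext^i(-,A)$, respectively composition factors of $\Ext^i(S,A)$, have grade at least $i$) are exactly what is needed to force the relevant homology to be concentrated in a single degree at the pivot, so that precisely one term survives there while all earlier columns contribute $0$.

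For the converse I would reverse this identification: assuming the leading entries lie in distinct columns, the same formula forces the minimal injective coresolution $0\to A\to I^0\to I^1\to\cdots$ to satisfy $\pdim I^\ell\le\ell$, which is the Auslander condition, and hence, with finite global dimension, Auslander regularity. The main obstacle is precisely the cancellation control flagged above: proving that the first non-zero Euler characteristic in each row of $C_A$ occurs exactly at the degree and column predicted by the minimal non-vanishing homology, with no accidental cancellation either destroying a genuine pivot or creating a spurious earlier one. This is the step where the special module theory of Nakayama algebras — uniserial syzygies and the grade conditions of Theorem~\ref{AusGorcharacterisation} — must replace generic homological algebra, and it is the part I expect to be the most delicate.
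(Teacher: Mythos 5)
The survey itself contains no proof of this theorem (it defers to the forthcoming work \cite{KKM}), so your proposal has to stand on its own, and as it stands it has a genuine gap exactly at the point you flag as ``the most delicate'': the cancellation control is not an auxiliary verification but the entire content of the statement, and the tools you propose for it cannot work in the converse direction. Conditions (3) and (4) of Theorem \ref{AusGorcharacterisation} are properties \emph{of} Auslander--Gorenstein algebras; when proving that the Bruhat condition implies Auslander regularity you cannot invoke them, so the sentence ``the same formula forces $\pdim I^\ell\le\ell$'' is circular as written. There is also a second, independent obstruction to your converse: the entries of $C_A$ are Euler characteristics, so even with no cancellation at all they only record which injectives occur in the relevant resolutions and the \emph{parity} of the degrees in which they occur, never the degrees themselves; the Auslander condition $\pdim I^\ell\le\ell$ is a statement about actual degrees and cannot be extracted from the matrix by linear algebra alone. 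Some structural theorem about Nakayama algebras is unavoidable, and your plan does not name one.

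The input that closes both gaps is that for linear Nakayama algebras non-cancellation is automatic and completely independent of the Auslander--Gorenstein property. Work with rows rather than columns: by your formula, $\dim_K\Ext_A^m(S(j),P(i))$ is the multiplicity of $I(j)$ in the $m$-th term of the minimal injective coresolution of $P(i)$, and since cosyzygies of uniserial modules are uniserial, that coresolution has indecomposable terms $I(j_0),I(j_1),\dots,I(j_d)$; thus row $i$ of $C_A$ has entry $-(-1)^m$ in column $j_m$ and $0$ elsewhere. With the canonical ordering $1\to 2\to\cdots\to n$ one checks that these indices are pairwise distinct and that $j_d$ is the smallest: $j_0\ge i$, the cosyzygy $\Omega^{-m}P(i)$ ($m\ge 1$) is the uniserial module stretching from the top of $I(j_{m-1})$ down to its socle $S(j_m)$, so its top index is at most $j_m$, and $j_{m+1}$ equals that top index minus one, giving $j_0\ge i>j_1=i-1>j_2>\cdots>j_d$. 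Hence for \emph{every} linear Nakayama algebra, Auslander regular or not, each row of $C_A$ is a signed $0/\pm1$ vector with no cancellation, and its leading entry sits precisely in the column of the last coresolution term. The theorem is thereby reduced to: the map $i\mapsto j_d(i)$ is a bijection if and only if $A$ is Auslander--Gorenstein. That equivalence is exactly Conjecture \ref{conj::AGiffARbijwelldef} in the Nakayama case, i.e.\ Theorem 7.9 of \cite{Kla} applied to $A^{op}$, combined with the left-right symmetry of the $k$-Gorenstein conditions from \cite{FGR}; it is a substantive theorem, not a consequence of Theorem \ref{AusGorcharacterisation}, and it is precisely the ingredient your proof of the converse is missing.
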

We refer to \cite{KKM} for details.

Despite the previous results and the rich combinatorial structure of Dyck paths, an explicit enumeration of linear Nakayama algebras that are Auslander regular is not known. We pose this as a final question; see also \cite{KKM} for some partial results:
\begin{problem}
Give an explicit enumeration of Auslander regular linear Nakayama algebras with $n$ simple modules.
\end{problem}
We saw in the section about monomial algebras that the classification of monomial Auslander regular algebras is reduced to Nakayama algebras, which gives another motivation for the previous problem and also a motivation to consider the same problem for cyclic Nakayama algebras.


\begin{ack}
We thank Markus Kleinau for useful comments and suggestions.
\end{ack}

\begin{funding}
The author V. Kl\'asz was supported by the Deutsche Forschungsgemeinschaft (DFG, German Research Foundation) under Germany's Excellence Strategy grant EXC-2047/1-390685813.
\end{funding}



\end{document}